\documentclass[11pt,reqno, a4paper]{amsart}

\usepackage{amsmath}
\usepackage{amssymb}
\usepackage{amscd}
\usepackage{amsfonts}
\usepackage{mathrsfs}
\usepackage{dsfont}
\usepackage{setspace}
\usepackage{version}
\usepackage{mathtools}
\usepackage{soul,tikz}
\usepackage{tikz-cd}

\usepackage{enumitem}
\usepackage{pgfplots,wrapfig}

\usepackage{xcolor}

\definecolor{crimson}{rgb}{0.85, 0.08, 0.4}
\definecolor{bleudefrance}{rgb}{0.2, 0.5, 0.9}
\usepackage[pdftex,colorlinks,linkcolor=crimson,citecolor=bleudefrance,filecolor=black]{hyperref}
\usepackage[utf8]{inputenc}

\newtheorem{theorem}{Theorem}[section]
\newtheorem{lemma}[theorem]{Lemma}
\newtheorem{corollary}[theorem]{Corollary}
\newtheorem{proposition}[theorem]{Proposition}

\theoremstyle{definition}

\newtheorem{example}[theorem]{Example}

\theoremstyle{remark}
\newtheorem{remark}[theorem]{Remark}

\numberwithin{equation}{section}

\renewcommand{\leq}{\leqslant}
\renewcommand{\geq}{\geqslant}
\newsavebox{\proofbox}
\savebox{\proofbox}{\begin{picture}(7,7)  \put(0,0){\framebox(7,7){}}\end{picture}}

\newcommand\E{\mathbb{E}}

\newcommand\R{\mathbb{R}}

\newcommand\C{\mathbb{C}}
\newcommand\N{\mathbb{N}}

\newcommand\HH{\mathbb{H}}
\newcommand\SL{\operatorname{SL}}

\newcommand\Aff{\operatorname{Aff}}

\newcommand\im{\operatorname{Im}}

\newcommand\supp{\operatorname{supp}}
\newcommand\GL{\operatorname{GL}}

\newcommand\PGL{\operatorname{PGL}}
\newcommand\SO{\operatorname{SO}}

\newcommand\Sp{\operatorname{Sp}}

\newcommand\Endo{\operatorname{End}}

\renewcommand\P{\mathbb{P}}

\renewcommand\b{{\bf b}}
\newcommand\acts{\operatorname{\curvearrowright}}

\def\a{\mathfrak{a}}

\def\b{\mathfrak{b}}

\newcommand\id{\operatorname{id}}

\newcommand{\efface}[1]{}

\addtolength{\voffset}{-0.7cm}
\addtolength{\textheight}{1.4cm}

\addtolength{\hoffset}{-0.8cm}
\addtolength{\textwidth}{1.6cm}

\begin{document}

\author{Richard Aoun}
\address{University Gustave Eiffel, Champs-sur-Marne, 
5 boulevard Descartes Champs-sur-Marne 77454 Marne-la-Vallée Cedex 2, France}
\email{richard.aoun@univ-eiffel.fr}
\thanks{R.A. is  supported  by a Research Group Linkage Programme from the Humboldt Foundation}

\author{Cagri Sert}
\address{Institut f\"{u}r Mathematik, Universit\"{a}t Z\"{u}rich, 190, Winterthurerstrasse, 8057 Z\"{u}rich, Switzerland}
\email{cagri.sert@math.uzh.ch}
\thanks{C.S. is supported by SNF Ambizione grant 193481}

\subjclass[2010]{Primary 37H15; Secondary 60J05, 60B15, 37A20}
\keywords{Stationary measures, projective space, Lyapunov exponents, random matrix products, fibered dynamical systems}

\title[Stationary measures on projective spaces]{Stationary probability measures on projective spaces 1:  block-Lyapunov dominated systems}

\begin{abstract}
Given a finite-dimensional real vector space $V$, a probability measure $\mu$ on $\PGL(V)$ and a $\mu$-invariant subspace $W$, under a block-Lyapunov contraction assumption, we prove existence and uniqueness of lifts to $P(V)\setminus P(W)$ of stationary probability measures on the quotient $P(V/W)$. In the other direction, i.e.~ block-Lyapunov expansion, we prove that stationary measures on $P(V/W)$ have lifts if any only if the group generated by the support of $\mu$ stabilizes a subspace $W'$ not contained in $W$ and exhibiting a faster growth than on $W \cap W'$. These refine the description of stationary probability measures on projective spaces as given by Furstenberg, Kifer and Hennion, and under the same assumptions, extend corresponding results by Aoun, Benoist, Bru\`{e}re, Guivarc'h, and others.
\end{abstract}

\setcounter{tocdepth}{1}
\maketitle

\section{Introduction}

Let $V$ be a finite dimensional real vector space and $\mu$ a probability measure on $\GL(V)$. Let $X_1,X_2,\ldots$ denote $\GL(V)$-valued iid random variables with distribution $\mu$ and write $L_n=X_n \ldots X_1$ for the associated random matrix product. Via the action of $\GL(V)$ on the projective space $P(V)$ the random product $L_n$ induces a Markov chain on $P(V)$. The stationary measures (called $\mu$-stationary measure) on $P(V)$ encode to a great extent the asymptotic behaviour of the random product $L_n$. This is for instance manifested by their ubiquitous role in the limit theorems for random matrix products starting with the work of Furstenberg--Kesten \cite{furstenberg-kesten}. Recall that the latter work proved the non-commutative extension of the law of large numbers which says that if $\mu$ has a finite first moment (i.e.~ $\int \log N(g) d\mu(g)<\infty$, where $N(g)=\max\{\|g\|,\|g^{-1}\|\}$ for a choice of norm on $V$), then there exists a constant $\lambda_1$ such that $\mu^\N$-a.s.
\begin{equation}\label{eq.lambda1.intro}
\frac{1}{n} \log \|L_n\| \underset{n \to \infty}{\longrightarrow} \lambda_1.
\end{equation}
The constant $\lambda_1(\mu)$ is called the top Lyapunov exponent and it is expressed via the Furstenberg formula as an integral with respect to a $\mu$-stationary measure $\nu$ on $P(V)$: 
\begin{equation}\label{eq.furstenberg.formula.1.intro}
\lambda_1(\mu)=\iint \log \frac{\|gv\|}{\|v\|} d\mu(g) d\nu(\R v).
\end{equation}
The subsequent work of Furstenberg \cite{furstenberg.non.commuting,Furstenberg.boundary} clarified qualitatively the description of $\mu$-stationary measures on $P(V)$ under irreducibility assumptions; the quantitative study of these measures (their dimensions, regularity properties etc.) are current topics of study. Here the probability measure $\mu$ is called (resp.~ strongly) irreducible if the semigroup $\Gamma_\mu$ generated by its support does not preserve a proper non-trivial subspace of $V$ (resp.~ the union of finite collection of such spaces). Otherwise, we shall say that $\mu$ is reducible. 

The theory of random matrix products encompasses many interesting situations when the irreducibility assumption is dropped; for example the study of random difference equations or affine recursion $X_{n+1}=A_n X_n +B_n$ ($A_n \in \GL_d(\R)$, $X_n,B_n \in \R^d$) belongs to this setting --- see more examples in \S \ref{sec.ex.and.app}. On the other hand, except in some particular situations (mainly the affine recursion which is extensively studied \cite{babillot-bougerol-elie,bougerol-picard,brandt, diaconis-freedman,hutchinson,kesten.random.difference} and more recently \cite{brofferio-buraczewski-damek, brofferio-peigne-pham, buraczewski.book}), the theory of random matrix products -- in particular, the description of stationary measures on projective spaces -- is much less complete without the irreducibility assumptions. The remarkable works due to Furstenberg--Kifer \cite{furstenberg-kifer} and Hennion \cite{hennion} established a description of stationary measures without any irreducibility assumption (implying, for example, continuity of Lyapunov exponents with respect to transition probabilities, see e.g.~ \cite{peres.oberwolfach}); however many natural questions remain open. 

The more recent works of Benoist--Bru\`{e}re \cite{benoist-bruere} and Aoun--Guivarc'h \cite{aoun-guivarch} refine the works of Furstenberg--Kifer and Hennion under various additional assumptions by giving a more precise description of stationary measures. These assumptions, among others, include certain \textit{domination conditions} imposing different speeds (Lyapunov exponents) in different subspaces preserved by $\Gamma_\mu$ -- excepting the treatment of a non-dominated case in \cite{benoist-bruere} under other algebraic assumption. 

Our goal in this paper is to give a description of stationary measures on $P(V)$ \textit{only under domination conditions}. In other words, in previous works, we eliminate all other hypotheses not pertaining to the domination of Lyapunov exponents. For example, our framework eliminates all algebraic assumptions in \cite{benoist-bruere, bougerol-picard} and considerably relaxes the domination assumptions in \cite{aoun-guivarch, brandt}. Our work in the contracting case below can also been as following up the well-studied field of contracting-on-average iterated function system (IFS), we comment on more on this in \S \ref{subsub.history.intro}. Let us now continue by introducing the necessary notions to express our framework and to state our results precisely.\\

\vspace{0.5cm}
Let $G$ be a measurable group acting measurably on a measure space $X$. Let $\mu$ be a probability measure on $G$. A probability measure $\nu$ on $X$ is said to be $\mu$-\textit{stationary} if $\nu=\mu \ast \nu$. Here, $\mu \ast \nu$ is a probability on $X$ for which the integral of a bounded measurable function $f$ is given by $
\iint f (gx) d\mu(g) d \nu(x).$
In this article, $V$ will denote a finite dimensional real or complex vector space and we will mostly be concerned with closed subgroups of $\GL(V)$ acting on $P(V)$. 

All probability measures considered in this paper will be supposed to have a finite first moment. The top (or first) Lyapunov exponent of a probability measure $\mu$ is defined as in \eqref{eq.lambda1.intro}. The other exponents $\lambda_i(\mu)$ for $i=1,\ldots,d=\dim V$ are defined by stipulating that $\lambda_1(\mu)+\ldots+ \lambda_k(\mu)$ is the almost sure limit 
$\lim_{n \to \infty} \frac{1}{n} \log \|\bigwedge^k L_n\|$, where $\bigwedge^k L_n$ denoted the $k^{th}$-exterior power of $L_n$. Clearly, $\lambda_1(\mu) \geq \lambda_2(\mu) \geq \cdots \geq \lambda_d(\mu)$. Whenever a probability measure $\mu$ is understood and $W<V$ is a $ \Gamma_\mu$-invariant subspace (we equivalently say $W$ is $\mu$-invariant), $\mu$ induces a measure on $\GL(W)$ and $\GL(V/W)$, we will denote the associated Lyapunov exponents, respectively, by $\lambda_i(W)$ and $\lambda_i(V/W)$. 

Finally, given a $\mu$-stationary probability measure $\nu$ on $P(V)$, its \textit{cocycle average}\footnote{The map $(g,\R v) \mapsto \log \frac{\|gv\|}{\|v\|}$ satisfies the additive cocycle property, whence comes the terminology.} is the quantity $\alpha(\nu)$ defined as
\begin{equation}\label{eq.defn.cocycle.av.intro}
\alpha(\nu)= \iint \log \frac{\|gv\|}{\|v\|} d\mu(g) d\nu(\R v).
\end{equation}
The definition does not depend on the choice of the norm $\|.\|$ and when $\nu$ varies over various $\mu$-stationary ergodic probability measures $\nu$, the values taken by $\alpha(\nu)$ is contained (possibly properly) in the set of Lyapunov exponents $\{\lambda_1(\mu),\lambda_2(\mu),\ldots,\lambda_d(\mu)\}$, always attaining the top Lyapunov exponent (see \cite{furstenberg-kifer,hennion}, more will be discussed below, and in detail in \S \ref{sec.prelim}). Here, ergodic means that $\nu$ is extremal in the convex set of $\mu$-stationary probability measures.

\subsection{Contracting case}
We are now ready to state our first result.

\begin{theorem}[Contracting case: existence and uniqueness of lifts]\label{thm.lift.exist.unique}
Let $\mu$ be a probability measure on $\GL(V)$ with finite first moment and  $W$ a $\mu$-invariant subspace. Then for every $\mu$-stationary ergodic probability measure $\overline{\nu}$ on $P(V/W)$ whose cocycle average satisfies $$\alpha(\overline{\nu}) >\lambda_1(W),$$ there exists a unique $\mu$-stationary lift $\nu$ on $P(V)\setminus P(W)$. 
\end{theorem}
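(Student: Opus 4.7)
The plan is to view $\pi : P(V)\setminus P(W) \to P(V/W)$ as an affine bundle and to interpret the problem as the study of a random affine iterated function system on the fibers, which is contracting on average precisely under the hypothesis $\alpha(\overline{\nu}) > \lambda_1(W)$. Concretely, fix a linear complement $U$ of $W$ in $V$; every point of $\pi^{-1}(\R \overline{v})$ is then uniquely of the form $\R(v+w)$ with $v \in U\setminus\{0\}$ a chosen lift and $w \in W$. Writing $g$ as a block matrix $\bigl(\begin{smallmatrix} A(g) & B(g) \\ 0 & C(g) \end{smallmatrix}\bigr)$ with respect to $V=W\oplus U$, the induced fiber action (after renormalizing $gv$ to the new base lift) is an affine map $w \mapsto \tilde A_g(\overline{v}) w + \tilde b_g(\overline{v})$ whose log-norm satisfies
\begin{equation*}
\log \|\tilde A_g(\overline{v})\| = \log \|A(g)\| - \log \frac{\|gv\|}{\|v\|} + O(1).
\end{equation*}
By the Oseledec/Furstenberg--Kesten theorem on $W$ and the Furstenberg formula \eqref{eq.furstenberg.formula.1.intro} applied to $\overline{\nu}$ on $V$, the associated Lyapunov exponent of the random cocycle $\tilde A$ equals $\lambda_1(W) - \alpha(\overline{\nu}) < 0$, which is the required contraction on average.

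\textbf{Existence.} I would run the classical reverse-order contraction argument. Let $(g_n)_{n\geq 1}$ be iid with law $\mu$, set $\overline{Y}_n := g_n^{-1} \cdots g_1^{-1} \overline{x}$ for a $\overline{\nu}$-generic base point $\overline{x}$, pick a measurable section $s: P(V/W) \to P(V) \setminus P(W)$, and define $Z_n := g_1 \cdots g_n \cdot s(\overline{Y}_n) \in \pi^{-1}(\overline{x})$. In fiber coordinates over $\overline{x}$, the increment $\|Z_{n+1} - Z_n\|$ is controlled by the ratio $\|(g_1 \cdots g_n)|_W\|/\|(g_1 \cdots g_n) v_n\|$ (with $v_n$ a lift of $\overline{Y}_n$) times an integrable term depending on $B(g_{n+1})$ and $s$. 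By the above, this ratio decays $\mu^\N$-a.s.\ at exponential rate $\exp(n(\lambda_1(W)-\alpha(\overline{\nu})))$; combined with the finite first moment of $\mu$, this makes $(Z_n)$ almost surely Cauchy. Let $Z_\infty = \lim Z_n$ and define $\nu_{\overline{x}}$ as the conditional law of $Z_\infty$ given $\overline{x}$; then $\nu := \int \nu_{\overline{x}} \, d\overline{\nu}(\overline{x})$ is a probability measure on $P(V)\setminus P(W)$. Stationarity follows by inserting one extra step into the construction and using the stationarity of $\overline{\nu}$, while the fact that $\nu$ gives no mass to $P(W)$ follows from the same exponential decay, which keeps the fiber coordinate of $Z_\infty$ uniformly bounded in $W$.

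\textbf{Uniqueness and main obstacle.} For uniqueness, take two $\mu$-stationary lifts $\nu$ and $\nu'$, disintegrate each along $\overline{\nu}$, and couple $\nu_{\overline{x}}$ with $\nu'_{\overline{x}}$ arbitrarily. Pushing the coupling through the same reverse-order trajectory, the difference of the two fiber coordinates evolves purely linearly (the affine translations cancel) under the same contracting cocycle, hence both coordinates converge almost surely to the same limit $Z_\infty$; consequently $\nu_{\overline{x}} = \nu'_{\overline{x}}$ $\overline{\nu}$-a.e.\ and $\nu = \nu'$. The principal obstacle I anticipate is to upgrade the pointwise exponential decay (valid for $\overline{\nu}$-a.e.\ base point and $\mu^\N$-a.s.) to integrable control sufficient for $Z_\infty$ to be an honest random variable with values in the noncompact fiber $W$, and in particular to rule out escape of mass to $P(W)$. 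Related, the fiber trivializations, and hence the linear part $\tilde A_g(\overline{v})$ of the fiber action, are only defined up to a coboundary depending on normalization, so setting up a clean cocycle formalism that delivers the contraction rate $\lambda_1(W) - \alpha(\overline{\nu})$ uniformly along trajectories is the most delicate technical ingredient of the proof.
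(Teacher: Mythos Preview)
Your bundle setup and the identification of the fiber contraction exponent $\lambda_1(W)-\alpha(\overline{\nu})$ are correct, and your uniqueness idea is essentially the paper's --- though it should be run with the \emph{forward} walk $L_n=g_n\cdots g_1$, not the reverse: by stationarity $\int f\,d\nu=\iint f(L_n\cdot x)\,d\nu(x)\,d\beta$, disintegrate over $\overline{\nu}$, and use that the linear part $A(L_n)/\|C(L_n)\theta\|$ of the fiber map contracts at rate $\lambda_1(W)-\alpha(\overline{\nu})$ for $\overline{\nu}$-a.e.\ $\theta$, which follows directly from Theorem~\ref{theorem.furstenberg_kifer} applied to $\mu$ on $V/W$.

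Your existence argument, however, has a genuine gap, and it is not the integrability issue you anticipate but the claimed decay rate itself. With $v_n$ a unit representative of $\overline{Y}_n=g_n^{-1}\cdots g_1^{-1}\overline{x}$ one has $\|C(R_n)v_n\|=1/\|C(R_n)^{-1}\overline{v}\|$, and $C(R_n)^{-1}=C(g_n)^{-1}\cdots C(g_1)^{-1}$ is the \emph{left} random walk for the image of $\mu$ under $g\mapsto g^{-1}$. Its growth on the fixed vector $\overline{v}$ is governed by the Furstenberg--Kifer--Hennion filtration of that inverted measure, not of $\mu$, and need not equal $-\alpha(\overline{\nu})$. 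Concretely, when $\Gamma_\mu$ acts irreducibly on $V/W$ (so $\alpha(\overline{\nu})=\lambda_1(V/W)$), the inverted walk is also irreducible and every nonzero $\overline{v}$ satisfies $\tfrac{1}{n}\log\|C(R_n)^{-1}\overline{v}\|\to -\lambda_{\dim(V/W)}(V/W)$; your ratio then behaves like $\exp n\bigl(\lambda_1(W)-\lambda_{\dim(V/W)}(V/W)\bigr)$, which blows up whenever $\lambda_{\dim(V/W)}(V/W)<\lambda_1(W)<\lambda_1(V/W)$, so $(Z_n)$ is not Cauchy. The root cause is that $(\overline{Y}_n)$ is a Markov chain for the inverted measure, not a stationary sequence with marginal $\overline{\nu}$; your scheme could be repaired by passing to the two-sided natural extension, where the past base trajectory \emph{is} stationary and Birkhoff's theorem does yield the rate $\alpha(\overline{\nu})$. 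The paper takes a different route for existence and explicitly remarks that trajectory-wise convergence cannot be expected without contraction in the base: it first proves tightness of the forward Ces\`{a}ro averages $\tfrac{1}{n}\sum_{k=1}^n\mu^{\ast k}\ast\delta_x$ on $P(V)\setminus P(W)$ via a Foster--Lyapunov drift argument (drift function $f(\theta,t)=\log(1+\|t\|)$, using the recurrence criterion of B\'{e}nard--de~Saxc\'{e}) under the stronger hypothesis $\beta_{\min}(V/W)>\lambda_1(W)$, and then reduces the general hypothesis $\alpha(\overline{\nu})>\lambda_1(W)$ to this special case by passing through the FKH filtration of $V/W$ and invoking the already-established uniqueness.
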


Here and elsewhere, we employ the term \textit{lift} to mean that $\nu$ is a $\mu$-stationary probability measure on $P(V) \setminus P(W)$ whose push-forward on $P(V/W)$ under the map induced by the natural projection $V \to V/W$ is $\overline{\nu}$.

Before proceeding with associated equidistribution result, let us briefly comment on the particular cases of this result in the literature. The case where $W$ is a hyperplane in $V$, i.e.~ the base $P(V/W)$ is a singleton, corresponds to affine stochastic recursion, i.e.~ affine random walks. In this case, the unique stationary measure on $P(V/W)$ is trivial and the associated cocycle average is zero (i.e.~ $\alpha(\overline{\nu})=0$). In his influential paper, Hutchinson \cite{hutchinson} proved the existence and uniqueness of stationary measure under strict contraction hypothesis. The contracting-on-average version of Hutchinson's result (i.e.~ when $0>\lambda_1(W)$) was proved by Vervaat \cite{vervaat}, Brandt \cite{brandt} and Bougerol--Picard \cite{bougerol-picard}. 
More recently, Benoist--Bru\`{e}re \cite{benoist-bruere} proved the above result when $\mu$ has bounded support and $V/W$ as well as $W$ are supposed to be strongly irreducible and proximal. Finally, Aoun--Guivarc'h \cite{aoun-guivarch} treated the case when $W$ is the second Furstenberg--Kifer--Hennion space (see below) and $\lambda_1(\mu)>\lambda_2(\mu)$, in particular when $V/W$ is strongly irreducible and proximal. The previous result therefore generalizes the corresponding results in these works.
 
\bigskip

The following is an associated equidistribution result which is, in fact, a crucial ingredient for Theorem \ref{thm.lift.exist.unique}.

\begin{proposition}[Equidistribution]
Keep the setting of Theorem \ref{thm.lift.exist.unique}.
\begin{enumerate}
\item[(1)]  For any $x\in P(V)\setminus P(W)$,  the Ces\`{a}ro means $\frac{1}{n}\sum_{i=1}^n{\mu^{\ast i} \ast \delta_{x}}$ converge weakly to $\nu$  if and only if $\frac{1}{n}\sum_{i=1}^n{\mu^{\ast i} \ast \delta_{\overline{x}}}$ converges weakly to $\overline{\nu}$, where $\overline{x}$ denotes the projection of $x$ to $P(V/W)$.
\item[(2)] For any $x \in P(V)\setminus P(W)$, the sequence $\frac{1}{n}\sum_{i=1}^n \delta_{X_i \cdots X_1 \cdot x}$ converges to $\nu$ a.s. if any only if $\frac{1}{n}\sum_{i=1}^n \delta_{X_i \cdots X_1 \cdot \overline{x}}$ converges to $\overline{\nu}$ a.s.
\end{enumerate}
\label{prop.equidistribution.contracting}
\end{proposition}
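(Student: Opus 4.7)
The ``only if'' directions in (1) and (2) are immediate: the projection $\pi: P(V)\setminus P(W) \to P(V/W)$ is continuous and $\Gamma_\mu$-equivariant, so weak-$*$ convergence of the given sequence of measures on $P(V)\setminus P(W)$ pushes forward under $\pi$ to convergence of the projected sequence to $\pi_*\nu = \overline\nu$.

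For the converse of (1), set $\nu_n := \tfrac{1}{n}\sum_{i=1}^n \mu^{\ast i}\ast\delta_x$ and assume $\pi_* \nu_n \to \overline\nu$. Any weak-$*$ accumulation point $\nu'$ of $(\nu_n)$ on the compact space $P(V)$ is $\mu$-stationary, and by $\Gamma_\mu$-invariance of $P(W)$ admits a decomposition
\begin{equation*}
\nu' \, = \, t\,\nu'_W \, + \, (1-t)\,\nu'_*,
\end{equation*}
with $t \in [0,1]$, $\nu'_W$ a $\mu$-stationary probability on $P(W)$ and $\nu'_*$ a $\mu$-stationary probability on $P(V)\setminus P(W)$ (summands omitted if their coefficient vanishes). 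The central step is to show that $t = 0$.

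The non-escape $t = 0$ is obtained via a coercive Lyapunov-function argument centered on
\begin{equation*}
\phi: P(V)\setminus P(W) \to \R_{\geq 0}, \qquad \phi([v]) \, := \, \log\tfrac{\|v\|}{\|\pi v\|},
\end{equation*}
which is continuous and tends to $+\infty$ on approach to $P(W)$. The additive cocycle identity
\begin{equation*}
\phi(g\cdot[v]) - \phi([v]) \, = \, \log\tfrac{\|gv\|}{\|v\|} - \log\tfrac{\|g\overline v\|}{\|\overline v\|}
\end{equation*}
reduces control of $\phi$ along trajectories to a comparison of the $V$- and $V/W$-log-norm cocycles. Under $\alpha(\overline\nu) > \lambda_1(W)$, a Duhamel-type splitting of $L_n v$ for $v\notin W$ into its $W$-component (growth rate $\leq \lambda_1(W)$) and a quotient-propagated component (growth rate $\alpha(\overline\nu)$, under the hypothesized Cesàro convergence on $P(V/W)$) shows that $\frac{1}{n}\log\|L_n v\|$ has the same averaged limit $\alpha(\overline\nu)$ as $\frac{1}{n}\log\|L_n \overline v\|$. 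A careful Cesàro summation then yields the uniform bound $\sup_n \int \phi^p\, d\nu_n < \infty$ for some $p>0$, and Markov's inequality gives $\limsup_n \nu_n(\{\phi > M\}) \to 0$ as $M \to \infty$, forcing $\nu'(P(W)) = 0$.

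With $t = 0$, $\nu'$ is a $\mu$-stationary measure on $P(V)\setminus P(W)$; the Portmanteau theorem applied to $\overline f \circ \pi$ (continuous on $P(V)\setminus P(W)$, with discontinuity set of $\nu'$-measure zero) for any continuous $\overline f$ on $P(V/W)$ identifies $\pi_*\nu' = \overline\nu$. Uniqueness of the lift from Theorem \ref{thm.lift.exist.unique} then gives $\nu' = \nu$, and since every accumulation point of $(\nu_n)$ equals $\nu$, the full sequence converges. The converse of (2) follows the same template, using the $\mu^\N$-a.s.~$\mu$-stationarity of weak-$*$ limits of the random empirical measures (a standard Markov-chain ergodic fact) and the a.s.~versions of the coercivity estimates for $\phi$ provided by the pointwise Furstenberg--Kifer--Hennion asymptotics. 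The principal obstacle throughout is the quantitative non-escape estimate for $\phi$, where the block-Lyapunov contraction hypothesis $\alpha(\overline\nu) > \lambda_1(W)$ is genuinely used; this is closely parallel to the tightness estimates underlying the proof of Theorem \ref{thm.lift.exist.unique} itself.
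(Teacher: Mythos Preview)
Your ``only if'' directions and the overall architecture of the ``if'' direction (establish tightness in $P(V)\setminus P(W)$, then invoke uniqueness of the lift) match the paper. The gap is in the tightness step itself.

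You correctly identify that the non-escape estimate for $\phi([v])=\log\frac{\|v\|}{\|\pi v\|}$ is the heart of the matter, and you say it is ``closely parallel to the tightness estimates underlying the proof of Theorem~\ref{thm.lift.exist.unique}''. But the tightness lemma actually proved in the paper (Lemma~\ref{lemma.tight}) requires the \emph{stronger} hypothesis $\beta_{\min}(V/W)>\lambda_1(W)$, not merely $\alpha(\overline\nu)>\lambda_1(W)$. The Foster--Lyapunov drift condition there needs $\|C(L_n)\theta\|$ to dominate $\|A(L_n)\|$ uniformly over $\theta$, which fails once $V/W$ has FKH strata with exponent $\le\lambda_1(W)$. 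Your proposed route --- a Duhamel splitting of $L_n v$ together with ``a careful Ces\`aro summation'' giving $\sup_n\int\phi^p\,d\nu_n<\infty$ --- is asserted but not carried out, and even granting the pointwise asymptotic $\frac{1}{n}\phi(L_n x)\to 0$, this is far too weak to conclude tightness of the Ces\`aro averages.

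The paper's fix is a reduction you are missing: let $j$ be the index with $\alpha(\overline\nu)=\beta_j(V/W)$ and replace $W$ by the larger invariant subspace $W':=\pi^{-1}(F_{j+1}(V/W))$. One checks (via Corollary~\ref{corollary.FK2} and \cite[Lemma~3.6]{furstenberg-kifer}) that $\beta_{\min}(V/W')=\alpha(\overline\nu)$ and $\lambda_1(W')=\max\{\lambda_1(W),\beta_{j+1}(V/W)\}<\alpha(\overline\nu)$, so the fully-dominated hypothesis $\beta_{\min}(V/W')>\lambda_1(W')$ now holds and Lemma~\ref{lemma.tight} applies directly to give tightness in $P(V)\setminus P(W')\subset P(V)\setminus P(W)$. (The assumption that the Ces\`aro means at $\overline x$ converge to $\overline\nu$, together with $\overline\nu(P(F_{j+1}(V/W)))=0$, forces $x\notin P(W')$, so the lemma is applicable at $x$.) From there, every limit point is a stationary probability on $P(V)\setminus P(W)$ projecting to $\overline\nu$, and uniqueness finishes as you wrote. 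Part (2) is handled identically using part (2) of Lemma~\ref{lemma.tight} and Breiman's law of large numbers.
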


\subsubsection{Some consequences}
We proceed to single out some consequences of the previous results. The first consequence will provide a refinement of the description of stationary measures given by Furstenberg--Kifer \cite{furstenberg-kifer} and Hennion \cite{hennion}. To state the consequences, we need to introduce some terminology (coming from \cite{furstenberg-kifer, hennion}).

Given a probability measure $\mu$ on $\GL(V)$, let $\mathcal{M}_\mu(V)$ denote the set of $\mu$-stationary and ergodic probability measures on $P(V)$. As $\nu$ varies in the set $\mathcal{M}_\mu(V)$, the values taken by the cocycle average $\alpha(\nu)$ describe the set of Furstenberg--Kifer--Hennion exponents (for short, we will say FKH exponents) $\infty>\beta_1(\mu)>\beta_2(\mu)>\ldots >\beta_k(\mu)=:\beta_{\min}(\mu)>-\infty$. For each FKH exponent $\beta_i(\mu)$, there exists a $\mu$-invariant subspace, denoted $F_i(\mu)$ and called $i^{th}$ FKH space, maximal for the property that that the top Lyapunov exponent of $\mu$ on $F_i(\mu)$ is $\beta_i(\mu)$. We have the FKH filtration given by $V=F_1(\mu) \supsetneq F_2(\mu) \cdots \supsetneq F_k(\mu) \supsetneq \{0\}$. By convention, we set $F_{k+1}(\mu)=\{0\}$ and $\beta_{k+1}(\mu)=-\infty$. When $\mu$ is understood, we will omit the dependence on $\mu$ from the notation and in this context the index $k$ will be the index of the smallest real FKH exponent. We warn the reader that as opposed to Oseledets' theorem, the FKH spaces depend only on $\mu$, in particular, they are not random. The following is a direct consequence of Theorem \ref{thm.lift.exist.unique} and results of \cite{furstenberg-kifer,hennion}.

\begin{corollary}\label{corol.sur.Fi/Fi+1}
Let $\mu$ be a probability measure on $\GL(V)$ with finite first moment. Then, for every $\mu$-stationary ergodic probability measure $\nu$ on $P(V)$, there exists $i=i_\nu\in \{1,\cdots, k\}$ such that $\nu$ is the unique lift on $P(F_{i_\nu})\setminus P(F_{i_\nu+1})$ of an ergodic $\mu$-stationary probability measure on the quotient space $F_{i_\nu}/F_{i_\nu+1}$.
\end{corollary}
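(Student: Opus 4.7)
The plan is to reduce the statement to Theorem \ref{thm.lift.exist.unique}, applied inside $F_{i_\nu}$ with the distinguished subspace $F_{i_\nu+1}$.

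First I would invoke the Furstenberg--Kifer--Hennion description to locate the relevant index: every ergodic $\mu$-stationary measure $\nu$ on $P(V)$ satisfies $\alpha(\nu) = \beta_i$ for some $i \in \{1, \ldots, k\}$ and is concentrated on $P(F_i) \setminus P(F_{i+1})$. I would set $i_\nu := i$ and let $\overline{\nu}$ denote the pushforward of $\nu$ under the map $P(F_i) \setminus P(F_{i+1}) \to P(F_i/F_{i+1})$ induced by the linear projection $F_i \to F_i/F_{i+1}$. Equivariance of this projection makes $\overline{\nu}$ a $\mu$-stationary probability measure on $P(F_i/F_{i+1})$.

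Next, I would verify the two hypotheses needed to apply Theorem \ref{thm.lift.exist.unique} with $V$ replaced by $F_i$ and $W$ by $F_{i+1}$: that $\overline{\nu}$ is ergodic and that $\alpha(\overline{\nu}) > \lambda_1(F_{i+1}) = \beta_{i+1}$. Ergodicity of $\overline{\nu}$ transfers from that of $\nu$ in a standard way: any bounded $\mu$-harmonic function on $P(F_i/F_{i+1})$ pulls back along the equivariant projection to a bounded $\mu$-harmonic function on $P(F_i)\setminus P(F_{i+1})$, which is $\nu$-a.e.\ constant by ergodicity of $\nu$, so the original function is $\overline{\nu}$-a.e.\ constant. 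For the cocycle average, I would show that the only FKH exponent of $\mu$ on $F_i/F_{i+1}$ is $\beta_i$: indeed, if $\overline{U} \subsetneq F_i/F_{i+1}$ were a proper $\mu$-invariant subspace with top Lyapunov exponent strictly less than $\beta_i$, its preimage $U$ in $F_i$ would strictly contain $F_{i+1}$ and, via the block-triangular structure of the extension $0 \to F_{i+1} \to U \to \overline{U} \to 0$, would have top Lyapunov exponent $\max(\beta_{i+1}, \lambda_1(\overline{U})) < \beta_i$, contradicting the maximality of $F_{i+1}$ inside $F_i$. Hence $\alpha(\overline{\nu}) = \beta_i > \beta_{i+1} = \lambda_1(F_{i+1})$.

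Once these hypotheses are in place, Theorem \ref{thm.lift.exist.unique} supplies a unique $\mu$-stationary lift of $\overline{\nu}$ on $P(F_i)\setminus P(F_{i+1})$; since $\nu$ itself is such a lift by construction, the two measures must coincide, which is the desired conclusion. The step I expect to be the main obstacle is the identification $\alpha(\overline{\nu}) = \beta_i$: it relies on the concatenation property of Lyapunov spectra under block-triangular extensions together with the maximality property defining $F_{i+1}$ inside $F_i$. Both are part of the FKH toolbox recalled in the preliminaries, so once they are invoked the corollary reduces to a direct application of Theorem \ref{thm.lift.exist.unique}.
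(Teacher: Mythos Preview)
Your proposal is correct and follows essentially the same route as the paper's proof: locate $\nu$ in $P(F_i)\setminus P(F_{i+1})$ via Furstenberg--Kifer--Hennion, push forward to $\overline{\nu}$ on $P(F_i/F_{i+1})$, check $\alpha(\overline{\nu})=\beta_i>\beta_{i+1}=\lambda_1(F_{i+1})$, and invoke the uniqueness half of Theorem~\ref{thm.lift.exist.unique}. The paper's version is terser---it cites the identity $\alpha(\overline{\nu})=\lambda_1(F_i)$ without argument (relying implicitly on Corollary~\ref{corollary.FK2}(1), which says $F_2(F_i/F_{i+1})=\{0\}$) and does not pause to verify ergodicity of $\overline{\nu}$---whereas you spell out both of these steps; but the logical skeleton is identical.
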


The index $i_\nu \in \{1,\ldots,k\}$ appearing above is uniquely defined and will be used again in the next result. Combining Theorem \ref{thm.lift.exist.unique} and Proposition \ref{prop.equidistribution.contracting} with the results of Guivarc'h--Raugi \cite{guivarch.raugi.ens} and Benoist--Quint \cite{bq.compositio}, we deduce the following.

\begin{proposition}\label{proposition.FKH}
Let $\mu$ be a probability measure on $\GL(V)$ with finite first moment. Denote by $V=F_1 \supset \cdots \supset  F_k\supset F_{k+1}=\{0\}$ the FKH filtration of $\mu$.
\begin{enumerate}
\item  Let $i \in \{1,\ldots,k\}$ and suppose $\Gamma_{\mu}$ acts irreducibly on $F_i/F_{i+1}$. Let $\nu$ be a $\mu$-stationary ergodic probability measure on $P(F_i)\setminus P(F_{i+1})$. Then the semigroup $\Gamma_\mu$ acts minimally on $\supp(\nu) \setminus P(F_{i+1})$.
\item Moreover, when $\Gamma_{\mu}$ acts irreducibly on each $F_j/F_{j+1}$ for $j=1, \ldots,k$, the map $\nu \mapsto \supp(\nu)\setminus P(F_{i_{\nu}+1})$ yields a bijection between the sets
$$\mathcal{M}_\mu(V) \longleftrightarrow \bigcup_{i=0}^k\{\Gamma_\mu \textrm{-minimal subsets of}\; P(F_i)\setminus P(F_{i+1})\}.$$ 
\end{enumerate}
\end{proposition}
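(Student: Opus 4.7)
The plan is to deduce the proposition by transferring the corresponding irreducible-case results of Guivarc'h--Raugi and Benoist--Quint from the base $P(F_j/F_{j+1})$ to the stratum $P(F_j)\setminus P(F_{j+1})$ by means of Theorem~\ref{thm.lift.exist.unique} and Proposition~\ref{prop.equidistribution.contracting}. I shall repeatedly use Corollary~\ref{corol.sur.Fi/Fi+1}, which associates to every $\nu \in \mathcal{M}_\mu(V)$ a unique index $i_\nu$ and an ergodic projection $\overline{\nu}$ on $P(F_{i_\nu}/F_{i_\nu+1})$, together with the FKH identities $\alpha(\overline{\nu}) = \beta_{i_\nu}(\mu) > \beta_{i_\nu+1}(\mu) = \lambda_1(F_{i_\nu+1})$, so that the cocycle-average hypothesis of Theorem~\ref{thm.lift.exist.unique} is automatically verified in all the settings below.

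For part~(1), fix $\nu$ and pick any $x \in \supp(\nu) \subseteq P(F_i)\setminus P(F_{i+1})$; let $\overline{x}$ be its projection to $P(F_i/F_{i+1})$. By irreducibility of $\Gamma_\mu$ on $F_i/F_{i+1}$, the Guivarc'h--Raugi equidistribution theorem gives $\frac{1}{n}\sum_{k=1}^n \mu^{\ast k}\ast\delta_{\overline{x}} \to \overline{\nu}$ weakly. Proposition~\ref{prop.equidistribution.contracting}(1) upgrades this to $\frac{1}{n}\sum_{k=1}^n \mu^{\ast k}\ast\delta_{x} \to \nu$, whence $\supp(\nu) \subseteq \overline{\Gamma_\mu\cdot x}$. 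The reverse inclusion is immediate from forward $\Gamma_\mu$-invariance of $\supp(\nu)$ (stationarity). As $x$ was arbitrary and $\supp(\nu) = \supp(\nu)\setminus P(F_{i+1})$, this yields the claimed minimality.

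For part~(2), the map $\nu \mapsto \supp(\nu)\setminus P(F_{i_\nu+1})$ is well-defined in view of part~(1). \textbf{Injectivity:} two ergodic $\nu,\nu'$ with the same image must share their FKH index $i$, and their projections $\overline{\nu},\overline{\nu'}$ on $P(F_i/F_{i+1})$ then share support; the Guivarc'h--Raugi uniqueness of a $\mu$-stationary measure supported on a given $\Gamma_\mu$-minimal subset forces $\overline{\nu}=\overline{\nu'}$, and Theorem~\ref{thm.lift.exist.unique} gives $\nu=\nu'$. \textbf{Surjectivity:} given a $\Gamma_\mu$-minimal $M \subseteq P(F_i)\setminus P(F_{i+1})$, its projection $\overline{M}$ is $\Gamma_\mu$-minimal in $P(F_i/F_{i+1})$ and carries a unique ergodic $\mu$-stationary $\overline{\nu}$; Theorem~\ref{thm.lift.exist.unique} provides a lift $\nu$. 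Applying the equidistribution argument of part~(1) at any $x \in M$ yields $\supp(\nu) \subseteq \overline{\Gamma_\mu\cdot x} = M$, and the minimality of both sides---of $\supp(\nu)$ by part~(1), of $M$ by hypothesis---forces equality.

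The main obstacles I anticipate are twofold. First, ergodicity of the lift $\nu$ produced in surjectivity must be verified so that $\nu \in \mathcal{M}_\mu(V)$: an ergodic decomposition of $\nu$ projects to an ergodic decomposition of $\overline{\nu}$, and the ergodicity of $\overline{\nu}$ combined with the uniqueness clause of Theorem~\ref{thm.lift.exist.unique} forces the decomposition to be trivial. Second, and more delicate, is that we invoke Guivarc'h--Raugi's pointwise Ces\`aro convergence at \emph{every} $\overline{x} \in \supp(\overline{\nu})$ rather than merely $\overline{\nu}$-a.e., which is precisely where the assumption of irreducibility of $\Gamma_\mu$ on each $F_j/F_{j+1}$ is essentially used; everything else reduces to bookkeeping with the FKH filtration and the two foundational results from Section~\ref{sec.prelim}.
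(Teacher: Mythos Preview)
Your approach matches the paper's: both parts reduce to the Benoist--Quint equidistribution on the irreducible quotient $P(F_i/F_{i+1})$ (the paper invokes \cite[Theorem~1.5]{bq.compositio}, which handles the irreducible but not necessarily proximal case---your attribution to Guivarc'h--Raugi is slightly imprecise here), then lift via Proposition~\ref{prop.equidistribution.contracting} and the uniqueness clause of Theorem~\ref{thm.lift.exist.unique}; the injectivity, surjectivity, and ergodicity-of-the-lift arguments are the same.

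One genuine slip to fix: your assertion $\supp(\nu) = \supp(\nu)\setminus P(F_{i+1})$ is false in general. The support of $\nu$ taken in the compact space $P(F_i)$ may well meet $P(F_{i+1})$; the paper makes this explicit in the remark immediately following its proof (minimality of $\supp(\nu)$ in $P(V)$ holds \emph{iff} $\supp(\nu)$ is compact in $P(F_i)\setminus P(F_{i+1})$). The remedy is to interpret all closures and supports in your argument relative to the locally compact ambient space $P(F_i)\setminus P(F_{i+1})$; with that reading your proof is correct.
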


Here, a semigroup $\Gamma$ is said to act minimally on a (non-necessarily compact) topological space $X$ if every orbit is dense or equivalently there is no $\Gamma$-invariant closed proper subspace of $X$.

\subsubsection{Further comments on previous results and our proofs}\label{subsub.history.intro}
The topic of iterated function systems (IFS) of Lipschitz maps on locally compact metric spaces and the associated Markov processes have a substantial history \cite{barnsley-elton,hutchinson,letac,steinsaltz}, see the survey of Diaconis--Freedman \cite{diaconis-freedman} and the recent work of Kloeckner \cite{kloeckner}. In these works, the contraction (or rather contraction-on-average) assumptions allow to prove the existence and uniqueness assertions in Theorem \ref{thm.lift.exist.unique} at a single step, by proving that trajectory-wise images of $X$ converge to a single point independent of the initial point -- this is also the case in Furstenberg's (unique) stationary measure in the irreducible and proximal case. However, in our setting, the lack of contraction assumptions in the basis $P(V/W)$ rules out the possibility of such an approach. Indeed, such a convergence does not need to hold in our situation. We therefore proceed differently. For the existence, we use tools from Markov chain theory, namely the version of Foster--Lyapunov recurrence criterion as recently worked out by B\'{e}nard--de Saxc\'{e} \cite{benard-desaxce}. This work is well-adapted to our purposes in order to tackle the existence problem in the finite first moment case; stronger forms of recurrent behaviour (e.g.~geometric recurrence) can be shown under stronger moment assumptions (e.g.~ finite exponential moment). For uniqueness, we have to deal with the additional lack of contraction in the basis. Inspired by the discussion in \cite[\S 2.2]{aoun-guivarch}, we adopt a fibred dynamical setting and see the space $P(V) \setminus P(W)$ as a fibred system of affine IFS over the compact $P(V/W)$. Our hypotheses then allow us to get fibrewise contraction which in turn suffices to deduce the uniqueness.

Finally, we briefly comment on a different line of study related to our results. The lift measures $\nu$ appearing in Theorem \ref{thm.lift.exist.unique} live naturally on a non-compact subset $P(V) \setminus P(W)$ of the projective space $P(V)$. Whereas strict contraction assumptions such as Hutchinson's \cite{hutchinson} are known to imply that $\nu$ has compact support, under the contraction-on-average assumption, $\nu$ has typically a non-compact support (see a discussion in \cite[\S 5]{aoun-guivarch}). In the particular case of affine random walks, the quantitative properties of the tail of the stationary measure $\nu$ has also been extensively studied by many authors including Kesten \cite{kesten.renewal}, Goldie \cite{goldie}, Guivarc'h--Le Page \cite{guivarch.page.pareto1, guivarch.page.pareto2}, and recently in the more general setting of IFS's by  Kloeckner \cite{kloeckner} -- see the latter work for more detailed overview in this direction.

\subsection{Expanding case}
Our second result treats the expanding case. More precisely,

\begin{theorem}[Partial expansion]\label{thm.mixed}
Let $\mu$ be a probability measure on $\GL(V)$ with a finite first moment and $W$ a $\mu$-invariant subspace. Let $\overline{\nu}$ be a $\mu$-stationary and ergodic probability measure on $P(V/W)$ such that for some $j=1,\ldots,k$, 
\begin{equation}\label{eq.antidom}
 \beta_{j+1}(W) < \alpha(\overline{\nu})  < \beta_{j}(W).   
\end{equation}
Then, the following are equivalent: 
\begin{itemize}
    \item[(i)] There exists $\mu$-stationary lift $\nu$ of $\overline{\nu}$ on $P(V) \setminus P(W)$. 
\item[(ii)] 
There exists a $\Gamma_\mu$-invariant subspace $W'<V$ such that $W' \cap W=F_{j+1}(W)$ and 
$P((W+W')/W)$ is the subspace generated by the support of $\overline{\nu}$.
\end{itemize}
In this case, the lift is unique, has the same cocycle average as $\overline{\nu}$.
\end{theorem}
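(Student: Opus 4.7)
The plan is to treat the two implications separately; uniqueness and the cocycle-average equality then fall out of the same analysis.

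\emph{Direction (ii)$\Rightarrow$(i).} Given $W'$ as in (ii), the idea is to reduce to Theorem~\ref{thm.lift.exist.unique} applied to the pair $W'\supset F_{j+1}(W)=W\cap W'$. The $\Gamma_\mu$-equivariant isomorphism $W'/F_{j+1}(W)\cong(W+W')/W$ transports $\overline{\nu}$ (supported on $P((W+W')/W)$ by assumption) to an ergodic $\mu$-stationary probability on $P(W'/F_{j+1}(W))$ with the same cocycle average. The inequality $\alpha(\overline{\nu})>\beta_{j+1}(W)=\lambda_1(F_{j+1}(W))$ coming from~\eqref{eq.antidom} is precisely the hypothesis of Theorem~\ref{thm.lift.exist.unique}, which yields a unique $\mu$-stationary lift supported on $P(W')\setminus P(F_{j+1}(W))\subset P(V)\setminus P(W)$, a lift of $\overline{\nu}$ with cocycle average $\alpha(\overline{\nu})$.

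\emph{Direction (i)$\Rightarrow$(ii).} Given a lift $\nu$, by ergodic decomposition we may assume $\nu$ is ergodic (each component projects to $\overline{\nu}$ by extremality). I first establish $\alpha(\nu)=\alpha(\overline{\nu})$. Fix a linear complement $V=V_0\oplus W$ with compatible Euclidean norm; for $v\notin W$, the $V_0$-component of $L_nv$ is $A_n\overline{v}$, so $R_n:=\|L_nv\|/\|L_n\overline{v}\|\geq 1$. Birkhoff's theorem applied to both ergodic stationary chains on $P(V)$ and $P(V/W)$ gives
\[
\tfrac{1}{n}\log R_n\;\longrightarrow\;\alpha(\nu)-\alpha(\overline{\nu})\quad\text{almost surely.}
\]
On the other hand, $\mu$-stationarity of $\nu$ forces $L_nv\sim\nu$ for every $n$, so $R_n$ has a fixed a.s.\ finite distribution independent of $n$; such a sequence cannot tend to $+\infty$ almost surely, forcing the limit to vanish. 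Corollary~\ref{corol.sur.Fi/Fi+1} then places $\supp(\nu)$ inside $P(F_{i_\nu}(V))\setminus P(F_{i_\nu+1}(V))$ with $\beta_{i_\nu}(V)=\alpha(\overline{\nu})$, and the $\mu$-invariant subspace $F_{i_\nu}(V)\cap W\subset W$ has top Lyapunov exponent at most $\alpha(\overline{\nu})<\beta_j(W)$; maximality of $F_{j+1}(W)$ then forces $F_{i_\nu}(V)\cap W\subset F_{j+1}(W)$. Setting $W':=\operatorname{span}_\R\supp(\nu)+F_{j+1}(W)$, which is $\Gamma_\mu$-invariant, the modular identity yields $W'\cap W=(\operatorname{span}\supp(\nu)\cap W)+F_{j+1}(W)=F_{j+1}(W)$, while $(W+W')/W=(W+\operatorname{span}\supp(\nu))/W$ is the linear span of $\supp(\overline{\nu})$, verifying (ii).

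\emph{Uniqueness.} Since any ergodic lift is supported on $P(F_{i_0}(V))\setminus P(F_{i_0}(V)\cap W)$, where $F_{i_0}(V)$ is the canonical FKH subspace of $V$ cut out by $\beta_{i_0}(V)=\alpha(\overline{\nu})$, and $\lambda_1(F_{i_0}(V)\cap W)\leq\beta_{j+1}(W)<\alpha(\overline{\nu})$, Theorem~\ref{thm.lift.exist.unique} applied to $F_{i_0}(V)\supset F_{i_0}(V)\cap W$ shows that any two ergodic lifts coincide; a general lift decomposes into this unique ergodic lift, hence equals it. The main obstacle is the equality $\alpha(\nu)=\alpha(\overline{\nu})$, hinging on the Birkhoff-and-stationarity argument above; the anti-domination hypothesis~\eqref{eq.antidom} then enters decisively in the FKH maximality step that places $F_{i_\nu}(V)\cap W$ inside $F_{j+1}(W)$.
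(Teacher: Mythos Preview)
Your argument for $(ii)\Rightarrow(i)$ is essentially the paper's: transport $\overline{\nu}$ through the isomorphism $W'/F_{j+1}(W)\simeq (W+W')/W$ and invoke Theorem~\ref{thm.lift.exist.unique}. For $(i)\Rightarrow(ii)$ and for the cocycle-average equality, however, you take a genuinely different route. The paper passes to the quotient $V_r/F_{j+1}(W)$ (where $V_r$ is the preimage of $F_{\overline{\nu}}$) so that the invariant subspace $W/F_{j+1}(W)$ becomes \emph{purely} expanding relative to $\overline{\nu}$, and then applies the purely-expanding result (Theorem~\ref{thm.no.lift}), whose proof rests on Furstenberg's martingale decomposition $\nu=\int\nu_b\,d\beta(b)$ together with Lemma~\ref{lemma.finite.union}. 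You instead prove $\alpha(\nu)=\alpha(\overline{\nu})$ directly: the function $f([w])=\|w\|/\|\overline{w}\|$ on $P(V)\setminus P(W)$ gives $R_n=f([L_n v])$, which has law $f_\ast\nu$ for every $n$ when $[v]\sim\nu$, while Birkhoff on the skew product forces $\tfrac{1}{n}\log R_n\to\alpha(\nu)-\alpha(\overline{\nu})$; a tight-in-law sequence cannot diverge almost surely, and $R_n\geq 1$ gives the reverse inequality. From there you exploit $\lambda_1(F_\nu\cap W)\leq\alpha(\overline{\nu})<\beta_j(W)$, which by the FKH filtration of $W$ forces $F_\nu\cap W\subset F_{j+1}(W)$, and $W':=F_\nu+F_{j+1}(W)$ does the job via the modular law. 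Uniqueness then follows from Theorem~\ref{thm.lift.exist.unique} applied inside the ambient FKH space $F_{i_0}(V)$.

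Your approach is correct and pleasantly self-contained: it bypasses Theorem~\ref{thm.no.lift} altogether, so the martingale/limit-endomorphism machinery never enters. The paper's route, by contrast, is more modular---it cleanly factors the mixed case into a purely expanding step (Theorem~\ref{thm.no.lift}) followed by a contracting step (Theorem~\ref{thm.lift.exist.unique}), which is conceptually illuminating and makes the equidistribution consequence (Corollary~\ref{corol.equidistribution.mixed}) immediate. Two small points of phrasing: when you write ``$L_n v\sim\nu$'' you should say explicitly that $[v]$ is drawn from $\nu$ independently of the walk (for fixed $v$ this is false); and ``the $V_0$-component of $L_nv$ is $A_n\overline{v}$'' should refer to the quotient block (the paper's $C$), not the $W$-block.
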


In the above statement, for a $\mu$-invariant subspace $W<V$,
we denote by 
$W=F_1(W) \supsetneq F_2(W) \cdots \supsetneq F_k(W) \supsetneq \{0\}$ the FKH filtration associated $\mu$-random product on $\GL(W)$ 
 and by 
$\infty>\beta_1(W)>\beta_2(W)>\ldots > \beta_{\min}(W)>-\infty$ the associated FKH exponents. 

Here is a direct consequence that we single out (see Corollary \ref{corol.mixed.intro} for a more general version).  
\begin{corollary}\label{corol.no.lift}
Suppose $\lambda_1(V/W)<\beta_{\min}(W)$. Then 
the following are equivalent: 
\begin{itemize}
  \item[i.] There exists a $\mu$-stationary probability measure on $P(V)\setminus P(W)$. 
    \item[ii.]  
    There  exists a $\Gamma_{\mu}$-invariant subspace of $V$ in direct sum with $W$.  
\end{itemize}
\end{corollary}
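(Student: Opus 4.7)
The strategy is to deduce the corollary directly from Theorem \ref{thm.mixed} applied with index $j = k$. Since the cocycle average $\alpha(\overline{\nu})$ of any $\mu$-stationary ergodic probability measure $\overline{\nu}$ on $P(V/W)$ is one of the FKH exponents on $V/W$, it is at most $\lambda_1(V/W)$; together with the hypothesis $\lambda_1(V/W) < \beta_{\min}(W) = \beta_k(W)$ and the convention $\beta_{k+1}(W) = -\infty$, this yields
\[
\beta_{k+1}(W) < \alpha(\overline{\nu}) < \beta_k(W).
\]
Thus every such $\overline{\nu}$ falls in the regime $j = k$ of Theorem \ref{thm.mixed}, in which the subspace $W'$ appearing in condition (ii) there satisfies $W' \cap W = F_{k+1}(W) = \{0\}$, i.e.\ $W'$ is in direct sum with $W$.

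The implication (ii) $\Rightarrow$ (i) is then immediate: given a $\Gamma_\mu$-invariant $W'$ with $W' \cap W = \{0\}$, the Markov--Kakutani fixed-point argument applied to the Markov operator on the compact space $P(W')$ produces a $\mu$-stationary probability measure on $P(W')$, and since $P(W') \cap P(W) = \emptyset$ it lives on $P(V) \setminus P(W)$, witnessing (i).

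For (i) $\Rightarrow$ (ii), let $\nu$ be a $\mu$-stationary probability measure on $P(V) \setminus P(W)$. By ergodic decomposition of $\nu$ as a $\mu$-stationary measure on the compact space $P(V)$, and since $\nu(P(W)) = 0$ forces almost every ergodic component to vanish on $P(W)$, we may assume $\nu$ is ergodic. Let $\overline{\nu} = \pi_* \nu$, where $\pi: P(V) \setminus P(W) \to P(V/W)$ is the natural projection. The crux of the argument is that $\overline{\nu}$ is again ergodic: by the ergodic theorem for Markov chains on the compact space $P(V)$, for $\nu$-a.e.\ $x$ the Cesaro averages $\frac{1}{n}\sum_{i=1}^n \mu^{\ast i} \ast \delta_x$ converge weakly to $\nu$, hence pushing forward under $\pi$ the averages $\frac{1}{n}\sum_{i=1}^n \mu^{\ast i} \ast \delta_{\pi(x)}$ converge to $\overline{\nu}$; on the other hand, applying the same theorem on $P(V/W)$, these averages must also converge to a single ergodic component of $\overline{\nu}$ for $\overline{\nu}$-a.e.\ starting point, forcing $\overline{\nu}$ to coincide with that component and therefore to be ergodic. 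Theorem \ref{thm.mixed} with $j = k$ now applies to $\overline{\nu}$: the existence of the lift $\nu$ (i.e.\ condition (i) of that theorem) yields the $\Gamma_\mu$-invariant subspace $W' < V$ with $W' \cap W = \{0\}$ required in condition (ii) here. The only technical point beyond invoking Theorem \ref{thm.mixed} is this ergodicity transfer from $\nu$ to $\overline{\nu}$, which I expect to be the one step that needs to be spelled out with care.
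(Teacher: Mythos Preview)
Your proposal is correct and follows the same route as the paper, which deduces the result from Theorem~\ref{thm.mixed} (via the more general Corollary~\ref{corol.mixed.intro}) by noting that the hypothesis $\lambda_1(V/W)<\beta_{\min}(W)$ forces $j=k$ and hence $F_{k+1}(W)=\{0\}$. The ergodicity of $\overline{\nu}=\pi_*\nu$ that you flag as delicate is in fact immediate---the pushforward of an ergodic $\mu$-stationary measure under a $G$-equivariant map is always ergodic (a non-constant bounded $\mu$-harmonic function on $P(V/W)$ pulls back to one on $P(V)\setminus P(W)$, contradicting ergodicity of $\nu$)---so your Ces\`aro argument, though valid, is heavier than needed.
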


Theorem \ref{thm.mixed} generalizes results of Benoist--Bru\`{e}re \cite{benoist-bruere} (as we do not assume any irreducibility or proximality assumptions) which in turn generalized results of Bougerol--Picard \cite{bougerol-picard} pertaining to the setting of affine random walks. However, the previous works also treat the more delicate ``critical case'' under algebraic assumptions.  We do not treat the critical case in this  paper and plan to cover it in a forthcoming work.


Here is another consequence of Theorem \ref{thm.mixed} that gives a slight refinement of the results of Furstenberg--Kifer \cite{furstenberg-kifer} and Hennion \cite{hennion} and generalizing the related result in \cite{aoun-guivarch}. For a more general version, see Corollary \ref{corol.uchech.muchech}.

\begin{corollary}\label{corol.umu.intro}
Fix a Euclidean structure on $V$ and let $\mu^{t}$ denote the image of $\mu$ by transpose map. Let $V=F_1(\mu^t)>F_2(\mu^t)$ be the second FKH space of $\mu^t$ in $V$ and let $V_{1,\mu}:=F_{2}(\mu^t)^\perp$. Then,
any $\mu$-stationary probability measure $\nu$ with $\alpha(\nu)=\lambda_1(\mu)$ is supported in $P(V_{1,\mu})$.
\end{corollary}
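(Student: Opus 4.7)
The plan is to argue by contradiction using Theorem \ref{thm.mixed} applied with $W := V_{1,\mu}$. As a first step, I would verify that $V_{1,\mu} = F_2(\mu^t)^\perp$ is $\Gamma_\mu$-invariant: for $g \in \Gamma_\mu$, $v \in V_{1,\mu}$ and $w \in F_2(\mu^t)$, one has $\langle g v, w\rangle = \langle v, g^t w\rangle = 0$ because $g^t w \in F_2(\mu^t)$ by $\Gamma_{\mu^t}$-invariance. Next, via the canonical identification $(V/V_{1,\mu})^* \cong F_2(\mu^t)$ (annihilator of $V_{1,\mu}$), the induced action of $\bar g$ on $V/V_{1,\mu}$ and of $g^t|_{F_2(\mu^t)}$ are transpose operators with the same operator norm, yielding
\[
\lambda_1(V/V_{1,\mu}) \;=\; \lambda_1(\mu^t|_{F_2(\mu^t)}) \;=\; \beta_2(\mu^t) \;<\; \lambda_1(\mu^t) \;=\; \lambda_1(\mu).
\]

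Now assume for contradiction that $\nu$ is a $\mu$-stationary ergodic probability measure on $P(V)$ with $\alpha(\nu) = \lambda_1(\mu)$ but $\nu(P(V_{1,\mu})) < 1$. Since $P(V_{1,\mu})$ is $\Gamma_\mu$-invariant, ergodicity forces $\nu(P(V_{1,\mu})) = 0$, so that $\nu$ is concentrated on $P(V) \setminus P(V_{1,\mu})$ and its pushforward $\overline{\nu}$ under the natural map $P(V) \setminus P(V_{1,\mu}) \to P(V/V_{1,\mu})$ is a $\mu$-stationary ergodic probability measure. The Furstenberg--Kifer--Hennion bound then yields $\alpha(\overline{\nu}) \leq \lambda_1(V/V_{1,\mu}) = \beta_2(\mu^t) < \lambda_1(\mu)$.

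Finally, I would invoke Theorem \ref{thm.mixed} with $W = V_{1,\mu}$: since $\beta_1(V_{1,\mu}) = \lambda_1(\mu) > \alpha(\overline{\nu})$, the hypothesis $\beta_{j+1}(V_{1,\mu}) < \alpha(\overline{\nu}) < \beta_j(V_{1,\mu})$ is satisfied for a suitable $j$ whenever $\alpha(\overline{\nu})$ avoids the FKH exponents of $V_{1,\mu}$, and the ``same cocycle average'' conclusion of Theorem \ref{thm.mixed} then forces $\alpha(\nu) = \alpha(\overline{\nu}) < \lambda_1(\mu)$, contradicting $\alpha(\nu) = \lambda_1(\mu)$. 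The main obstacle is the critical case where $\alpha(\overline{\nu})$ hits some FKH exponent of $V_{1,\mu}$; I would bypass it via the coboundary identity
\[
\alpha(\nu) \;=\; \alpha(\overline{\nu}) \;+\; \iint \bigl(\log\theta(g v) - \log\theta(v)\bigr)\,d\mu(g)\,d\nu(\R v), \qquad \theta(\R v) := \frac{\|v\|}{\|\overline{v}\|_{V/V_{1,\mu}}},
\]
whose right-hand integral vanishes by $\mu$-stationarity once $\log\theta$ is $\nu$-integrable; establishing this integrability (via, e.g., a recurrence estimate for the induced Markov chain near $P(V_{1,\mu})$) again yields $\alpha(\nu) = \alpha(\overline{\nu}) < \lambda_1(\mu)$, the desired contradiction. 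The more general Corollary \ref{corol.uchech.muchech} announced in the statement would be handled within the same framework.
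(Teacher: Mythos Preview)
Your approach is essentially the same as the paper's own proof of the more general Corollary \ref{corol.uchech.muchech}: set $W=V_{1,\mu}$, verify $\Gamma_\mu$-invariance and the inequality $\lambda_1(V/V_{1,\mu})=\beta_2(\mu^t)<\lambda_1(\mu)$, project an ergodic $\nu$ with $\nu(P(V_{1,\mu}))=0$ to $\overline{\nu}$, and invoke the ``same cocycle average'' conclusion of Theorem \ref{thm.mixed} to obtain $\alpha(\nu)=\alpha(\overline{\nu})<\lambda_1(\mu)$, a contradiction. The paper carries out exactly this argument (with the same duality observation relating $\mu$-Lyapunov exponents on $V/W$ and $\mu^t$-Lyapunov exponents on $W^\perp$) and does not explicitly separate out the critical situation $\alpha(\overline{\nu})=\beta_j(V_{1,\mu})$ that you flag.

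Your concern about the critical case is legitimate: Theorem \ref{thm.mixed} as stated requires the strict inequalities $\beta_{j+1}(W)<\alpha(\overline{\nu})<\beta_j(W)$, and the paper's proof of Corollary \ref{corol.uchech.muchech} invokes Theorem \ref{thm.mixed} without verifying them. However, your proposed workaround is not convincing as written. The coboundary identity is correct, but reducing to integrability of $\log\theta(\R v)=\log(\|v\|/\|\overline v\|)$ trades one hard problem for another: this is a logarithmic moment of the distance from $\supp(\nu)$ to $P(V_{1,\mu})$, and establishing it ``via a recurrence estimate'' is precisely the kind of control one \emph{lacks} in the critical regime --- the Foster--Lyapunov machinery of \S\ref{sec.contracting} uses the strict gap $\alpha(\overline{\nu})>\lambda_1(W)$, which fails here. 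So the sketch does not close the gap; it only relocates it.
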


The proof of Theorem \ref{thm.mixed} is carried out in sort of an inductive way. At first, we prove its particular case (Theorem \ref{thm.no.lift}, which corresponds to a ``purely expanding'' case), where the index $j$ in \eqref{eq.antidom} is equal to $k$ (i.e.~ $\beta_{k+1}(W)=-\infty$). The proof of this case makes use of the decomposition of stationary measures via the martingale approach due to Furstenberg \cite{furstenberg.non.commuting} to exploit the transient behaviour of the random walk -- this is somewhat in the spirit of original application of Furstenberg. In the second step, by using the work of Furstenberg--Kifer \cite{furstenberg-kifer} and Hennion \cite{hennion}, we reduce the partial expansion to a case where we have a purely expanding and a contracting part. We then combine Theorem \ref{thm.no.lift} with Theorem \ref{thm.lift.exist.unique} to conclude. 

\subsection{An application to homogeneous dynamics}\label{subsec.homogeneous.intro}
Our results can be recast from the point of view of homogeneous dynamics. In this context, they relate to proving (non)-existence and uniqueness of stationary measures on various algebraic homogeneous spaces of type $G/H$ (where $G$ and $H$ are real algebraic groups) and lifts from quotients thereof. We defer the statement of our main result in this context (Theorem \ref{thm.homogeneous}) to \S \ref{sec.homogeneous}. Here we content with Corollary \ref{corol.Poincare.group} below, which follows from an application of our result (combined with Benoist--Quint \cite{bq.compositio}) in a concrete case.






To state it, let $0 \leq k \leq d$ be two integers and $X_{k,d}$ denote the space of affine $k$-spaces in $\R^d$. As a homogeneous space, it can be realized as $G/H$ where $G=\GL_d(\R) \ltimes \R^d$ and $H=P\ltimes \R^{k}$, with $P$ being a maximal parabolic subgroup in $\GL_d(\R)$ given by the stabilizer of a $k$-space in $\R^d$. In the next statement, we consider $\SL_2(\C)$ as a real linear algebraic subgroup of $\SL_4(\R)$ (preserving a complex structure on $\R^4$) and say that a probability measure $\mu$ on a real linear algebraic group is Zariski-dense if the semigroup $\Gamma_\mu$ generated by its support is.

\begin{corollary}\label{corol.Poincare.group}
Let $\mu$ be a Zariski-dense probability measure on $\SL_2(\C) \ltimes \R^4$ with finite first moment. Then,
\begin{itemize}[leftmargin=1cm]
    \item (Bougerol--Picard \cite{bougerol-picard}) There exists no $\mu$-stationary probability measure on $X_{0,4}$.
    \item There exists no $\mu$-stationary probability measure on $X_{1,4}$.
    \item There exists a unique $\mu$-stationary probability measure on $X_{2,4}$.
    \item There exists a unique $\mu$-stationary probability measure on $X_{3,4}$.
\end{itemize}
\end{corollary}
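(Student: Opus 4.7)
The approach is to analyze, for each $k \in \{0, 1, 2, 3\}$, the existence and uniqueness of $\mu$-stationary probability measures on $X_{k, 4}$ via the Plücker embedding. Writing $V = \R^5 = \R^4 \oplus \R e_5$ and linearizing the $G = \SL_2(\C) \ltimes \R^4$-action as $(g, b) \mapsto \bigl(\begin{smallmatrix} g & b \\ 0 & 1\end{smallmatrix}\bigr) \in \GL_5(\R)$, set $W = \R^4 \times \{0\}$, $V_k := \bigwedge^{k+1} V$, $W_k := \bigwedge^{k+1} W$. The Plücker image of $X_{k, 4}$ is the open subset $\Gr_{k+1}(V) \setminus \P(W_k)$ of $\P(V_k) \setminus \P(W_k)$, and $V_k / W_k \cong \bigwedge^k W$ as $G$-modules (with the $\R^4$-translation part acting trivially). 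We apply Theorem \ref{thm.lift.exist.unique} or Theorem \ref{thm.mixed} to the $\mu$-invariant pair $W_k < V_k$ and read off the conclusion on $X_{k, 4}$.

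The projection of $\mu$ to $\SL_2(\C)$ is Zariski-dense, so by Furstenberg its top Lyapunov exponent $\lambda$ on $\R^4 = \C^2$ is strictly positive. Since $W$ is real-irreducible for $\SL_2(\C)$, its Lyapunov spectrum is $(\lambda, \lambda, -\lambda, -\lambda)$, giving $(2\lambda, 0, 0, 0, 0, -2\lambda)$ on $\bigwedge^2 W$, $(\lambda, \lambda, -\lambda, -\lambda)$ on $\bigwedge^3 W \cong W^*$, and $0$ on $\bigwedge^4 W$. The key structural input is the $\SL_2(\C)$-decomposition $\bigwedge^2_\R \C^2 = S \oplus L$ coming from the Hodge $(2, 0) \oplus (0, 2)$ and $(1, 1)$ parts, where $S$ is the trivial 2-dimensional summand and $L$ is the irreducible 4-dimensional real Minkowski representation via $\SL_2(\C) \to \SO^+(1, 3)$ (hermitian $2 \times 2$ matrices with $g \cdot H = g H g^*$). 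Hence $\bigwedge^2 W$ has FKH data $F_2 = S$, $\beta_1 = 2\lambda$, $\beta_2 = 0$, while $W$ and $\bigwedge^3 W$ are irreducible with $\beta_1 = \lambda$.

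For $k = 0$, $\alpha(\bar\nu) = 0 < \lambda = \beta_{\min}(W)$, and Corollary \ref{corol.no.lift} demands a $\Gamma_\mu$-invariant line $\R(v_0, 1) \subset V$; invariance translates to $b = (I - g) v_0$ for all $(g, b) \in \Gamma_\mu$, a proper algebraic constraint incompatible with Zariski density. For $k = 1$, any $\mu$-stationary $\bar\nu$ on $\P(V_1/W_1) = \P(W)$ has $\alpha(\bar\nu) = \lambda \in (\beta_2, \beta_1) = (0, 2\lambda)$; Theorem \ref{thm.mixed} with $j = 1$ reduces existence to a $\Gamma_\mu$-invariant $W' \subset \bigwedge^2 V$ with $W' \cap \bigwedge^2 W = S$ and $W' + \bigwedge^2 W = \bigwedge^2 V$. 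The fact that $\Hom_{\SL_2(\C)}(W, L) = 0$ (as $W$ and $L$ are non-isomorphic irreducible real $\SL_2(\C)$-modules, distinguishable by their Lyapunov spectra) forces the unique $\SL_2(\C)$-equivariant candidate $W' = S \oplus (W \wedge e_5)$, but the translation action $b \cdot (w \wedge e_5) = w \wedge e_5 + w \wedge b$ has nonzero $L$-component since decomposable 2-vectors $w \wedge b$ span $\bigwedge^2 W$, contradicting Zariski-dense invariance. For $k = 2$, Corollary \ref{corol.sur.Fi/Fi+1} applied to $\mu$ on $\bigwedge^2 W$ combined with Furstenberg's uniqueness for the irreducible $\R$-proximal Minkowski representation $L$ identifies the ergodic $\mu$-stationary measures on $\P(V_2/W_2) = \P(\bigwedge^2 W)$: a unique $\sigma_1$ on $\P(\bigwedge^2 W) \setminus \P(S)$ with $\alpha(\sigma_1) = 2\lambda$, and the family $\{\delta_{[s]} : [s] \in \P(S)\}$ with $\alpha = 0$. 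Since $\alpha(\sigma_1) = 2\lambda > \lambda = \lambda_1(\bigwedge^3 W)$, Theorem \ref{thm.lift.exist.unique} furnishes a unique lift $\nu_1$; each $\delta_{[s]}$ fails the lifting criterion of Theorem \ref{thm.mixed} by an analogous computation showing that a putative $\Gamma_\mu$-invariant line $\R(\alpha + s \wedge e_5) \subset \bigwedge^3 V$ forces $s \wedge b = 0$ for all $b \in \R^4$ by Zariski density, hence $s = 0$. An equidistribution argument via Proposition \ref{prop.equidistribution.contracting} from a point in $X_{2, 4}$ confines $\nu_1$ to the Grassmannian, giving uniqueness on $X_{2, 4}$. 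For $k = 3$, $V_3/W_3 \cong \bigwedge^3 W \cong W$ is real-irreducible; Benoist--Quint gives a unique $\mu$-stationary measure on $\P(W)$ with $\alpha = \lambda > 0 = \lambda_1(\bigwedge^4 W)$, and Theorem \ref{thm.lift.exist.unique} supplies a unique lift; the Grassmannian condition is vacuous since $\Gr_4(\R^5) = \P((\R^5)^*)$.

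The main difficulty lies in case $k = 1$ and the $\delta_{[s]}$-part of $k = 2$: one must identify the unique $\SL_2(\C)$-equivariant candidate invariant complement from the Hodge-type decomposition of $\bigwedge^2 \R^4$, and then use the Zariski density of the $\R^4$-translation part to exclude it. A secondary point is to verify, via the equidistribution of Proposition \ref{prop.equidistribution.contracting}, that the lifts furnished by Theorem \ref{thm.lift.exist.unique} in cases $k = 2, 3$ are supported on the Plücker image of $X_{k, 4}$ inside $\P(V_k) \setminus \P(W_k)$.
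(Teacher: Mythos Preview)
Your approach is correct and runs parallel to the paper's. Both linearize $X_{k,4}$ via the Pl\"ucker embedding into $P(\bigwedge^{k+1}\R^5)\setminus P(\bigwedge^{k+1}\R^4)$, compute the $\SL_2(\C)$-module structure and Lyapunov/FKH data on the exterior powers (in particular the splitting $\bigwedge^2\R^4 = L \oplus S$ with $L$ the $4$-dimensional Minkowski representation and $S$ the $2$-dimensional trivial one), and then invoke the contracting/expanding theorems. The paper packages this through its homogeneous-space machinery (Theorem~\ref{thm.homogeneous}, Corollary~\ref{corol.homogeneous}, and the properness Lemma~\ref{lemma.proper.bijection}), whereas you apply Theorems~\ref{thm.lift.exist.unique} and~\ref{thm.mixed} directly and handle the issue of the lift landing on the Grassmannian via Proposition~\ref{prop.equidistribution.contracting} together with closedness of the Pl\"ucker image --- which is exactly the content of Lemma~\ref{lemma.proper.bijection}, so you should isolate and prove that properness statement.

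The only substantive divergence is the $k=2$ case. The paper observes that $e_1\wedge e_2$ lies in the irreducible summand $L$ and argues that the base $\Gr_2(\R^4)$ carries a unique $\mu_L$-stationary measure (the Furstenberg measure on $P(L)$). You instead classify \emph{all} ergodic stationary measures on $P(\bigwedge^2\R^4)$ --- the Furstenberg measure $\sigma_1$ and the Dirac family $\delta_{[s]}$ on $P(S)$ --- and then separately exclude lifts of each $\delta_{[s]}$ by an invariant-line computation. This works, but it is extra labor: the shortcut is to note that every nonzero $s\in S$ satisfies $s\wedge s\neq 0$ (a two-line check in the basis you wrote down), so $P(S)\cap \Gr_2(\R^4)=\emptyset$ and the $\delta_{[s]}$ never arise as projections from $X_{2,4}$. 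Either route yields the same conclusion; your version has the mild advantage of being self-contained within Theorems~\ref{thm.lift.exist.unique} and~\ref{thm.mixed}, while the paper's is quicker once the observation about $P(S)$ and the Grassmannian is made.
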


In \cite{benoist-bruere}, Benoist--Bru\`{e}re settled the case where $\mu$ is Zariski-dense in $G$ (their work is an inspiration for our \S \ref{sec.homogeneous}). However, in the situation of the previous corollary, both irreducibility (for the exterior power) and proximality conditions appearing in the work \cite{benoist-bruere} fail to hold. The lack of such algebraic hypotheses in Theorems \ref{thm.lift.exist.unique} and \ref{thm.mixed} allows us to deal with these difficulties.

\bigskip

This article is organized as follows. In \S \ref{sec.ex.and.app}, we discuss some general examples of applications of our results. Section \ref{sec.prelim} is devoted to some preliminaries of random matrix product theory (mainly results of \cite{furstenberg-kifer,hennion} and some consequences). Section \ref{sec.contracting} is devoted to the contracting case, there we prove Theorem \ref{thm.lift.exist.unique} and related results in this case. In Section \ref{sec.expanding}, we tackle the expanding case and prove Theorem \ref{thm.mixed} as well as its corollaries. Finally, in Section \ref{sec.homogeneous}, we adopt the point of view of homogeneous dynamics, prove Theorem \ref{thm.homogeneous} and deduce Corollary \ref{corol.Poincare.group}.

\section{Examples}\label{sec.ex.and.app}
We give examples of various settings where our results from introduction  applies.



\subsection{Lifts of stationary measures on irreducible representations}\label{subsec.irred.ex}
Extending the pioneering work of Furstenberg \cite{furstenberg.non.commuting}, Guivarc'h--Raugi \cite{guivarch.raugi.ens} and later Benoist--Quint \cite{bq.compositio} gave a clear classification of stationary measures on projective spaces for irreducible (more generally completely reducible) representations. The goal here is to discuss some examples of general situations where each stationary measure on $P(V)/P(W)$ is the unique lift of such a Guivarc'h--Raugi--Benoist--Quint measure on the quotient $P(V/W)$ or where the latter measures do not lift.

The following is the simplest example of a setting beyond irreducibility. We include it to illustrate the consequences. Some of the stated consequences of our results in this example also follow from the recent works of Aoun--Guivarc'h \cite{aoun-guivarch} (contracting case, and purely expanding case  if $W$ is  strongly irreducible and proximal), Benoist--Bru\`{e}re \cite{benoist-bruere} (all cases including, additionally, the critical case if $W$ is strongly irreducible and proximal).
\begin{example}\label{example.ip}(\texttt{Strongly irreducible and proximal quotient})
Let $V$ be a finite-dimensional real vector space, $\mu$ a probability measure on $\GL(V)$ with finite first moment such that $\Gamma_\mu$ preserve a subspace $W<V$ and acts strongly irreducibly and proximally on $V/W$. By Furstenberg \cite{furstenberg.non.commuting}, there exists a unique stationary measure (sometimes called the Furstenberg measure) $\nu_F$ on $V/W$. 
\begin{enumerate}[leftmargin=0.5cm]
\item (\texttt{Contracting case}) Suppose $\lambda_1(V/W)>\lambda_1(W)$. Then, it follows from uniqueness of Furstenberg measure $\nu_F$ and by our Theorem \ref{thm.lift.exist.unique} that there exists a unique $\mu$-stationary probability measure on $P(V)\setminus P(W)$. Moreover, by Proposition \ref{proposition.FKH}, there exists a unique $\Gamma_\mu$-minimal set in $P(V) \setminus P(W)$.
\item (\texttt{Expanding case}) Suppose $\lambda_1(V/W)<\lambda_1(W)$.
\begin{itemize}[leftmargin=0.5cm]
\item (\texttt{Purely expanding case})
if $\beta_{\min}(W)>\lambda_1(V/W)$ --- which happens if
the Furstenberg--Kifer--Hennion subspace of $F_2(W)$ is trivial (this happens in particular if $\Gamma_\mu$ acts irreducibly on $W$) --- then the hypotheses of Theorem \ref{thm.no.lift} are satisfied. It implies that either $W$ has an invariant complementary subspace $W'$ (which is, under the assumptions of this example, necessarily unique, strongly irreducible, proximal and satisfies $W \oplus W'=V$) or there does not exist any $\mu$-stationary probability measure on $P(V)/P(W)$.
\item (\texttt{Mixed=Partially expanding case}) Continuing to exclude the critical case (i.e.~equality of $\lambda_1(V/W)$ with some $\beta_j(W)$), the remaining case is there exists $j \geq 2$ with
\begin{equation}\label{eq.under.these}
\qquad
\beta_{\min}(W)< \cdots < \beta_j(W)< \lambda_1(V/W) < \beta_{j-1}(W)\leq \lambda_1(W).
\end{equation}
Since we are $V/W$ is irreducible, in particular it satisfies $F_2(V/W)=\{0\}$ and hence we are able to apply Theorem \ref{thm.mixed}.
\end{itemize}
    \end{enumerate}
The cases described above (contracting, purely and partially expanding) give a description of $\mu$-stationary ergodic probability measures on $P(V) \setminus P(W)$. Provided that one can also describe those on $P(W)$ (e.g.~ if $W$ is irreducible by using Benoist--Quint \cite{bq.compositio} or if one can re-iterate the above analysis for $W$ based on our results), one gets a full description of probability measures on $P(V)$.
\end{example}
\vspace{0.1cm}

Notice that the cases in the above example are exhaustive from algebraic point of view but not from the point of view of Lyapunov exponents. The (only) reason for this is that we are not yet able to handle the critical case (absence of Lyapunov-domination) without any algebraic assumption.

The next setting is a more general one compared to the previous and discusses lifts of Benoist--Quint--Guivarc'h--Raugi measures on the quotient $P(V/W)$. The exhaustive analysis --- contracting, purely and partially expanding cases --- carried out in the previous example can be, in exactly the same way, carried over for the next example (even the irreducibility assumption on $V/W$ can be relaxed to $F_2(V/W)=\{0\}$). 
Aiming to illustrate certain disparate situations, we only focus on the simpler contracting and purely expanding cases --- up to our knowledge, even these cases are not covered by previous works. 

\begin{example}[\texttt{Irreducible quotient}]\label{example.sl2C}
Let $V$ be a finite-dimensional real vector space, $\mu$ a probability measure on $\GL(V)$ such that $\Gamma_\mu$ preserves a subspace $W<V$ and acts irreducibly $V/W$. If the quotient $V/W$ is $\mu$-proximal, the action is necessarily strongly irreducible and we are back to the previous example. So suppose $V/W$ is not $\mu$-proximal. Note this can happen   when $\dim(V/W)\geq 4$, unless the representation in $V/W$ is compact modulo the center. For example,\\[2pt]
\indent \textbf{A.} (\textit{Compact group, unique stationary}) $V/W\simeq \R^{d}$ and the representation of $\Gamma_\mu$ in $V/W$ lives in $\R^\ast O_d(\R)$ and Zariski-dense in $O_d(\R)$. In this case, $P(V/W)$ has a unique $\mu$-stationary probability measure (the $O_d(\R)$-invariant probability measure). \\[2pt]
\indent \textbf{B.} (\textit{Non-compact group, unique stationary}) $V/W\simeq \R^{4}$ and the Zariski-closure of the image of the representation of $\Gamma_\mu$ in $V/W$ is isomorphic to the real group $G=\SL_{2}(\C)$. Then, there exists a unique $\mu$-stationary probability measure on $P(V/W)$. Indeed, thanks to \cite[Theorem 1.7 \& Remark 1.8]{bq.compositio}, $\mathcal{M}_1(P(V/W))$ is in bijection with the $M$-orbits in the set of fixed points of $AN$ on $P(V/W)$, where $A \simeq \mathbb{G}_m^1(\R)$ a $\R$-split torus subgroup of $G$, $N \simeq \R^2$ maximal unipotent subgroup normalized by $A$ and $M \simeq \SO_2(\R)$ is the centralizer of $A$ in a maximal compact subgroup of $G$. Up to conjugacy, with appropriate choices of $A$ (as diagonals) and $N$ (as contained in upper triangular matrices), a straightforward calculation shows that $X^{AN}$ is the real projective line corresponding to the subspace $\langle e_1,e_2 \rangle$ in $\R^4$. The group $M\simeq \SO_2(\R)$ acts by rotations (hence transitively) on $X^{AN}$ and the claim follows. \\[2pt]
\indent \textbf{C.} (\textit{Non-compact group, uncountably many stationary}) $V/W \simeq \bigwedge^{3}\R^{n+1}$ and the Zariski-closure of the image of the representation of $\Gamma_\mu$ in $V/W$ is the image of $\SO(n,1)$ in $\GL(\bigwedge^3\R^{n+1})$. In this case, there are  uncountably many $\mu$-stationary ergodic probability measure on $P(V/W)$ if $n \geq 5$ (see \cite[Remark 1.9]{bq.compositio}).
\begin{enumerate}[leftmargin=1.1cm]
    \item (\texttt{Contracting case}) Suppose $\lambda_1(V/W)>\lambda_1(W)$. Then, since $V/W$ is irreducible, every stationary measure $\nu$ on $P(V/W)$ has the same cocycle average $\alpha(\nu)=\lambda_1(V/W)$. Therefore, by Theorem \ref{thm.lift.exist.unique} for each stationary measure $\nu$ on $P(V/W)$, there exists precisely one $\mu$-stationary measure that lifts $\nu$ in $P(V) \setminus P(W)$. Moreover, combining Proposition \ref{proposition.FKH} and Benoist--Quint \cite{bq.compositio}, we have the following natural bijections:
    \begin{equation}\label{eq.biject.biject}
        \begin{aligned}
        {}\qquad \, \, \mathcal{M}_1(P(V/W))& \simeq \mathcal{M}_1(P(V)\setminus P(W)) \simeq \{\Gamma_\mu\text{-minimal sets in $P(V/W)$}\}\\
        &\simeq \{\Gamma_\mu\text{-minimal sets in $P(V)\setminus P(W)$}\}.
        \end{aligned}
    \end{equation}
    \item (\texttt{Purely expanding case}) Suppose $\lambda_1(V/W)<\beta_{\min}(W)$. Then the hypotheses of Theorem \ref{thm.no.lift} is satisfied and the same conclusion as the previous example holds: Either $W$ has an invariant complementary subspace $W'$ (which is, under the assumptions of this example, necessarily unique, strongly irreducible and satisfies $W \oplus W'=V$) or there does not exist any $\mu$-stationary probability measure on $P(V)/P(W)$.
        \end{enumerate}
Other situations of irreducible, non-proximal representations on $V/W \simeq \R^{4d}$ arise for instance when the Zariski-closure of the image of the representation of $\Gamma_\mu$ in $V/W$ is isomorphic to $\SL_{2d}(\C)$, $\SL_d(\HH)$, $\Sp_d(\C)$, $\SO_{2d}(\C)$ etc.~(see e.g.~\cite[\S 2.2]{chef-israel}).

Finally, notice that the first bijection in \eqref{eq.biject.biject} can be thought of as a generalization of the existence of unique stationary probability measure for the contracting affine recursion where the $V=\R^{d+1}$ and $W=\R^d$, studied by Brandt \cite{brandt}, Bougerol--Picard \cite{bougerol-picard}, and in a more general setting, among others by Diaconis--Freedman \cite{diaconis-freedman} (see references therein).
\end{example}

\begin{remark}\label{rk.mixed.general}
In the above examples, the fact that the quotient $V/W$ is irreducible is only used, for concreteness, to ensure a classification of stationary measures on $P(V/W)$ (thanks to \cite{bq.compositio,guivarch-raugi}). However, if, without any algebraic assumption on $V/W$, one apriori has such a classification on $P(V/W)$, then clearly Theorems \ref{thm.lift.exist.unique} and \ref{thm.mixed} apply under the respective Lyapunov domination conditions to determine the stationary measures on $P(V)\setminus P(W)$.
\end{remark}

\section{Preliminaries}\label{sec.prelim}
In \S \ref{subsec.preliminaries.FK}, we discuss some preliminary material mainly from the work (and consequences thereof) of Furstenberg--Kifer \cite{furstenberg-kifer} and Hennion \cite{hennion}.

\subsection{The Furstenberg--Kifer--Hennion subspaces}\label{subsec.preliminaries.FK}

Unless otherwise specified, all our random variables will be defined on the Bernoulli space $\mathbf{B}:=\GL_d(\R)^{\N}$. An element of $\mathbf{B}$ will be denoted with $b=(b_i)_{i\in \N}$.   Let $\mu$ be a probability measure on $\GL_d(\R)$. We endow $\mathbf{B}$ with the probability measure $\beta:=\mu^{\otimes \N}$. The left random walk associated to $\mu$ is the sequence of random variables $(L_n)_{n\in \N}$ defined for every $n\in \N$ and $b\in \mathbf{B}$ by $L_n(b):=b_n\cdots b_1$. The right random walk $(R_n)_{n\in \N}$ is defined by $R_n(b)=b_1\cdots b_n$. 
We denote by $\Gamma_{\mu}$ the closed semigroup generated by the support of $\mu$. 

The following result of Furstenberg--Kifer and Hennion underlies our considerations in this article.

\begin{theorem}[Furstenberg--Kifer~\cite{furstenberg-kifer}, Hennion~\cite{hennion}]\label{theorem.furstenberg_kifer}
Let $\mu$ be a probability measure on $\GL_d(\R)$ with finite first moment. Then there exists a partial flag $\R^d= F_1 \supset F_2 \supset \ldots \supset F_k\supset F_{k+1}=\{0\}$ of $\Gamma_\mu$-invariant subspaces and a collection of real numbers $\beta_1(\mu)>\ldots>\beta_k(\mu)=:\beta_{\min}(\mu)$ verifying the following: 
\begin{enumerate}
    \item   For every $v \in F_i \setminus F_{i+1}$, we have for $\beta$-almost every $b\in \mathbf{B}$, 
\begin{align*}
\lim_{n\to\infty} \frac{1}{n}\log \|L_n(b) v\|=\beta_i(\mu).
\end{align*}
\item 
The $\beta_i(\mu)$ are the values of
\begin{align*}
\alpha(\nu):= \int_{\mathbb{P}(\R^d)}\int_{\GL_d(\R)}\log\frac{\|gv\|}{\|v\|} d\mu(g)d\nu(\R v)
\end{align*}
that occur when $\nu$ ranges over $\mu$-ergodic $\mu$-stationary probability measures on the projective space $\mathbb{P}(\R^d)$.
\item For every $\mu$-stationary ergodic probability measure $\nu$ on  $P(V)$, letting $i=i(\nu) \in \{1,\ldots,k\}$ be such that $\alpha(\nu)=\beta_i(\mu)$, the subspace $F_{\nu}$ generated by $\supp(\nu)$ verifies $F_{\nu}\subset F_{i}$. In particular $\lambda_1(F_{\nu})=\alpha(\nu)$. \end{enumerate}
\end{theorem}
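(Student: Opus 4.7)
The plan is to prove parts (1)--(3) in an integrated way, first building the flag intrinsically from the pointwise growth data of vectors in $V$ and then identifying those subspaces with the variational description via cocycle averages on $P(V)$. For each $v \in V \setminus \{0\}$, I would first establish the existence of an a.s.~constant limit
\[\chi(v) := \lim_{n \to \infty} \frac{1}{n} \log \|L_n v\|.\]
The bound $\log \|L_n v\| \leq \log \|L_n\| + \log \|v\|$ combined with Furstenberg--Kesten handles finiteness, and Kingman's subadditive ergodic theorem delivers the a.s.~limit. A crucial input due to Hennion is that this limit can be arranged to exist for \emph{every} nonzero $v$ outside a single common $\beta$-null set of shifts. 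Granted this uniformity, the triangle inequality $\|L_n(v+w)\| \leq \|L_n v\| + \|L_n w\|$ gives $\chi(v+w) \leq \max(\chi(v), \chi(w))$ and scaling is trivial, so each sublevel set $\{v \neq 0 : \chi(v) \leq c\} \cup \{0\}$ is a linear subspace. Finite-dimensionality of $V$ then forces $\chi$ to take only finitely many values $\beta_1 > \ldots > \beta_k$, yielding the decreasing flag $F_i := \{v : \chi(v) \leq \beta_i\} \cup \{0\}$. Shift-invariance of the asymptotic growth rate yields $\chi(gv) = \chi(v)$ for every $g \in \Gamma_\mu$, so each $F_i$ is $\Gamma_\mu$-invariant, proving (1).

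For (2), let $\nu$ be $\mu$-stationary ergodic on $P(V)$ and form the skew-product $T(b, \R v) := (\sigma b, \R b_1 v)$ on $\mathbf{B} \times P(V)$, which preserves $\beta \otimes \nu$. Applying Birkhoff's theorem to $f(b, \R v) := \log(\|b_1 v\|/\|v\|)$, whose integral against $\beta \otimes \nu$ is $\alpha(\nu)$, the telescoping Birkhoff sum $\frac{1}{n}\sum_{k=0}^{n-1} f(T^k(b, \R v)) = \frac{1}{n}\log(\|L_n v\|/\|v\|)$ converges to $\alpha(\nu)$ for $(\beta \otimes \nu)$-a.e.~$(b, \R v)$. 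Comparing with Step~1 gives $\chi(v) = \alpha(\nu)$ for $\nu$-a.e.~$\R v$, so $\alpha(\nu) \in \{\beta_1, \ldots, \beta_k\}$. For the converse inclusion, on $F_i$ the induced random walk has top Lyapunov exponent $\beta_i$ by construction; compactness of $P(F_i)$ together with Markov--Kakutani produces a $\mu$-stationary probability there, and the Furstenberg formula combined with an ergodic decomposition produces an ergodic component $\nu_i$ with $\alpha(\nu_i) = \beta_i$ (this component cannot concentrate on $P(F_{i+1})$, since the cocycle average there is $\leq \beta_{i+1} < \beta_i$).

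For (3), the subspace $F_\nu$ spanned by $\supp(\nu)$ is $\Gamma_\mu$-invariant: the stationarity identity $\nu = \mu \ast \nu$ forces $\supp(\mu)\cdot \supp(\nu) \subset \supp(\nu)$, hence $\Gamma_\mu \cdot \supp(\nu) \subset \supp(\nu)$. By the previous step, $\nu$-a.e.~$\R v$ lies in the closed subspace $P(F_{i(\nu)})$, where $i(\nu)$ is the index with $\alpha(\nu) = \beta_{i(\nu)}$. Closedness of $F_{i(\nu)}$ in $V$ forces $\supp(\nu) \subset P(F_{i(\nu)})$ and therefore $F_\nu \subset F_{i(\nu)}$. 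Finally, the Furstenberg formula on $P(F_\nu)$ gives $\lambda_1(F_\nu) \geq \alpha(\nu) = \beta_{i(\nu)}$, while $F_\nu \subset F_{i(\nu)}$ gives $\lambda_1(F_\nu) \leq \beta_{i(\nu)}$, so equality $\lambda_1(F_\nu) = \beta_{i(\nu)}$ holds.

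The main obstacle is the Hennion-type uniform upgrade in Step~1, converting ``the a.s.~limit exists for each fixed $v$ individually'' into ``the limit exists for every $v$ simultaneously, outside a single $\beta$-null set''. Without this uniformity, the sublevel sets $\{\chi \leq c\}$ are not manifestly closed nor even subspaces, and the clean flag structure collapses. Reproducing this uniformity by hand would require tying together the cocycles $\log \|L_n v\|$ across varying $v$, typically via a careful analysis of the induced dynamics on the Grassmannians of $V$ and an exhaustion argument across quotient representations.
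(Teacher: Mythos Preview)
The paper does not prove this theorem; it is quoted from \cite{furstenberg-kifer,hennion} as a preliminary result with no argument supplied, so there is no in-paper proof to compare against. I will assess your sketch on its own terms.

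Your Step~1 contains the essential gap, and it is not the one you flag at the end. For a fixed vector $v$, the sequence $n \mapsto \log\|L_n(b)v\|$ is \emph{not} subadditive along the shift: from $L_{n+m}(b) = L_m(\sigma^n b)\,L_n(b)$ one only gets $\|L_{n+m}(b)v\| \leq \|L_m(\sigma^n b)\|\cdot\|L_n(b)v\|$, which feeds into the operator-norm Kingman bound $\limsup \frac{1}{n}\log\|L_n v\| \leq \lambda_1$ but does not produce a limit. More seriously, even if you import Oseledets to obtain, for $\beta$-a.e.~$b$, the existence of $\lim_n \frac{1}{n}\log\|L_n(b)v\|$ for every $v$, nothing you have written forces this limit to be \emph{deterministic}. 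The Oseledets slow subspaces satisfy $V^i_b = b_1^{-1} V^i_{\sigma b}$ and are genuinely random; the event $\{v \in V^i_b\}$ is not tail-measurable, so Kolmogorov's 0--1 law does not apply. Establishing that $\chi(v)$ is an a.s.~constant for each fixed $v$ is precisely the substance of the Furstenberg--Kifer argument --- it is typically proved \emph{through} the stationary-measure machinery (limit points of $\frac{1}{n}\sum \mu^{\ast k}\ast\delta_{[v]}$ are stationary, and their cocycle averages control the growth of $v$) or by Hennion's induction on quotients, not as a preliminary to it. As written, Step~1 assumes the theorem's core conclusion.

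The uniformity-over-$v$ issue you identify is real but secondary: once determinism of $\chi(v)$ is known for each individual $v$, uniformity follows cheaply by fixing a basis adapted to the resulting flag and using $\chi(v+w) = \max(\chi(v),\chi(w))$ when $\chi(v)\neq\chi(w)$. Your Steps~2 and~3 are essentially sound granted a correct Step~1, though in Step~2 you tacitly use that the skew product $T$ is ergodic on $(\mathbf{B}\times P(V),\beta\otimes\nu)$ when $\nu$ is $\mu$-ergodic; this is true but warrants a reference.
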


\begin{remark}\label{rem.examples.FK}
In the result above, the set of exponents $\{\beta_1(\mu),\ldots,\beta_k(\mu)\}$ is contained in the set of Lyapunov exponents of $\mu$ and $\beta_1(\mu)$ coincides with the top Lyapunov exponent $\lambda_1(\mu)$.
 However, this subset can be much smaller. For instance, 

\begin{enumerate}
\item If $G$ acts irreducibly on $V$, then for every probability measure $\mu$ on $G$ such that $\Gamma_\mu$ is Zariski-dense in $G$, we have $F_2=\{0\}$, or equivalently, $k=1$. 
\item The same situation (i.e.~ $F_2=\{0\}$) can also occur in reducible settings. For instance,  denote by $G:=\Aff_d(\R)$ the affine group of $\R^d$ and embed it in $\GL_{d+1}(\R)$ in the usual way via $(x\mapsto Ax+b)\longmapsto \begin{bmatrix}A& b\\
0& 1\end{bmatrix}$. Denote by $W\simeq \R^d$ the invariant subspace of $\R^{d+1}$ corresponding to the fixed points of the translations of $G$.  Let $\mu$ be a probability measure on $G<\GL_{d+1}(\R)$ such that $\Gamma_{\mu}$ does not fix a point in $\R^d$. If $\lambda_1(W)>0$ (expanding case),  then $F_2=\{0\}$. However, if  $\lambda_1(W)<0$ (contracting  case), then $F_2=W$, $\beta_1(\mu)=V$ and $\beta_2(\mu)=\lambda_1(W)$. The other exponents $\beta_i$'s depend on the projection of $\mu$ to the linear part. As one can see, unlike the irreducible setting, the FKH spaces depend heavily on the measure $\mu$. 
\item More generally, one can easily construct probability measures $\mu_1$ and $\mu_2$ with same support such that, for $\mu_1$,  the FKH exponents comprise all Lyapunov exponents and, for $\mu_2$,   the only FKH exponent is the top Lyapunov exponent of $\mu_2$.
\end{enumerate}
\end{remark}
We single out two useful
consequences of this result. For the first one, we refer to   \cite[Proposition 2.4]{prohaska-sert}, \cite[Lemma 2.6]{aoun-sert-ams} or \cite[Lemma 1.5]{eskin-lindenstrauss} for a similar observation.
\begin{corollary}\label{corol.unif.exp}
For every $\epsilon>0$, there exists $n_0 \in \N$ such that for every non-zero $v \in \R^d$ and $n \geq n_0$, we have
$$
\frac{1}{n}{\int{\log \frac{\|gv\|}{\|v\|} d\mu^{\ast n}(g)}} \geq \beta_{\min}(\mu)-\epsilon.
$$
\end{corollary}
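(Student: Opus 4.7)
The plan is to argue by contradiction, converting the pointwise lower bound on the integral into a lower bound on the cocycle average of a $\mu$-stationary limit measure on $P(V)$, and then invoking Theorem \ref{theorem.furstenberg_kifer}(2) to obtain the required estimate $\geq \beta_{\min}(\mu)$.

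As a preliminary, I would introduce the function
$$\bar\psi([v]) := \int_{\GL(V)} \log \frac{\|gv\|}{\|v\|} d\mu(g)$$
on $P(V)$. The finite first moment assumption provides the integrable dominating function $\log N(g)$, from which it follows both that $\bar\psi$ is bounded (by $\int \log N(g) d\mu(g)$) and, via dominated convergence, that $\bar\psi$ is continuous. Applying the additive cocycle identity $\log \|L_n v\|/\|v\| = \sum_{j=1}^n \log \|b_j L_{j-1} v\|/\|L_{j-1} v\|$, taking expectation, and using that $b_j$ is independent of $L_{j-1}$, one gets the clean identity
$$\frac{1}{n} \int_{\GL(V)} \log \frac{\|gv\|}{\|v\|} d\mu^{\ast n}(g) = \int_{P(V)} \bar\psi \, d\mu_n^{[v]},$$
where $\mu_n^{[v]} := \frac{1}{n} \sum_{j=0}^{n-1} \mu^{\ast j} \ast \delta_{[v]}$ is the $n$-th Ces\`aro average of the Markov chain on $P(V)$ started at $[v]$.

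Now suppose for contradiction that there exist $\varepsilon > 0$, a sequence $n_k \to \infty$, and non-zero $v_k \in V$ with $\int \bar\psi \, d\mu_{n_k}^{[v_k]} < \beta_{\min}(\mu) - \varepsilon$. Since $P(V)$ is compact, the space of probability measures on $P(V)$ is weakly compact, and I extract a subsequence along which $\mu_{n_k}^{[v_k]}$ converges weakly to some $\nu$. A routine telescoping shows that $\|\mu \ast \mu_{n_k}^{[v_k]} - \mu_{n_k}^{[v_k]}\|_{\mathrm{TV}} \leq 2/n_k \to 0$, so $\nu$ is $\mu$-stationary.

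Finally, since $\bar\psi$ is continuous and bounded, weak convergence transfers to integrals, yielding $\int \bar\psi \, d\mu_{n_k}^{[v_k]} \to \int \bar\psi \, d\nu = \alpha(\nu)$. Applying the ergodic decomposition of $\nu$ and Theorem \ref{theorem.furstenberg_kifer}(2) to each ergodic component, one has $\alpha(\nu) \geq \beta_{\min}(\mu)$, contradicting $\alpha(\nu) \leq \beta_{\min}(\mu) - \varepsilon$. The only real point to watch is the boundedness and continuity of $\bar\psi$, which is precisely where the finite first moment hypothesis enters; beyond this, everything is a standard weak compactness argument on the projective space.
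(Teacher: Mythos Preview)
Your proof is correct and follows essentially the same route as the paper: both argue by contradiction, extract a weak limit of the Ces\`aro averages $\frac{1}{n_j}\sum_{k=0}^{n_j-1}\mu^{\ast k}\ast\delta_{[v_j]}$ on the compact space $P(V)$, use the cocycle identity together with dominated convergence (finite first moment) to bound the cocycle average of the limiting stationary measure by $\beta_{\min}(\mu)-\varepsilon$, and then invoke ergodic decomposition and Theorem~\ref{theorem.furstenberg_kifer}(2) to reach a contradiction. Your introduction of the bounded continuous function $\bar\psi$ makes the passage to the weak limit slightly more transparent, but the argument is the same.
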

The proof is similar to \cite[Proposition 2.4]{prohaska-sert}, we include it for reader's convenience.
\begin{proof}
We argue by contradiction and suppose there exists $\epsilon_0>0$ a sequence $n_j \in \N$ going to infinity such that for every $j \in \N$ there exists $v_j$ satisfying $$\frac{1}{n_j}{\int{\log \frac{\|gv_j\|}{\|v_j\|} d\mu^{\ast n_j}(g)}} \leq \beta_{\min}(\mu)-\epsilon_0.$$ 
Up to passing to a subsequence of $n_j$, we can suppose that the limit $\lim_{j \to \infty}\frac{1}{n_j}\sum_{k=0}^{n_j-1} \mu^{\ast k} \ast \delta_{\R v_j}$ exists. Denote this limit probability measure on $\P(\R^d)$ by $\tilde{\nu}$. Clearly, $\tilde{\nu}$ is $\mu$-stationary. We have
\begin{equation}
\begin{aligned}
\int \int \log \frac{\|gv\|}{\|v\|} d\mu(g) d\tilde{\nu}(\R v) &= \lim_{j \to \infty} \frac{1}{n_j} \sum_{k=0}^{n_j-1} \int \int \log \frac{\|ghv_j\|}{\|hv_j\|} d\mu(g) d\mu^{\ast k}(h)\\
&=\lim_{j \to \infty} \frac{1}{n_j}{\int{\log \frac{\|gv_j\|}{\|v_j\|} d\mu^{\ast n_j}(g)}} \leq \beta_{\min}(\mu)-\epsilon_0
\end{aligned}
\end{equation}
where we used dominated convergence (thanks to the finite first moment assumption) in the first equality and the additive cocycle property of $(g,\R v) \to \log \frac{\|gv\|}{\|v\|}$ in the second equality. In particular, $\tilde{\nu}$ has a $\mu$-stationary ergodic component $\hat{\nu}$ with 
$$\int \int \log \frac{\|gv\|}{\|v\|} d\mu(g) d\hat{\nu}(\R v) \leq\beta_{\min}(\mu)-\epsilon_0
$$
which contradicts (ii) of Theorem \ref{theorem.furstenberg_kifer}.
\end{proof}

In general, a FKH exponent $\beta_i(V/W)$ of the quotient $V/W$ may not appear as a FKH exponent of the full-space $V$. The following observation says that this does not happen if $W$ is already a FKH space.

\begin{corollary}\label{corollary.FK2}
1. For every $1\leq i\leq  k$,  $F_2(F_i/F_{i+1})=\{0\}$.\\[2pt] 
2. For every $1\leq j\leq k$,   the FKH spaces (resp.~ exponents) of $V/F_{j+1}$ are the $F_i/F_{j+1}$'s (resp.~ $\beta_i(V)$'s) for $i=1,\cdots, j$. In particular, $\beta_{\min}(V/F_{j+1})=\beta_{j}(V)$.
\end{corollary}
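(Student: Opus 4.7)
The proof plan rests on one general fact used throughout: for $\mu$-invariant subspaces $W'<W$ of $V$, the top Lyapunov exponent decomposes as
\[
\lambda_1(W) = \max\{\lambda_1(W'),\lambda_1(W/W')\}.
\]
The direction ``$\geq$'' is immediate from $\|L_n|_{W'}\|\leq \|L_n|_W\|$ and the norm-decreasing property of the quotient map. For ``$\leq$'', I fix a vector-space splitting $W=W'\oplus U$: then $L_n$ has block-triangular form with diagonal blocks realising $L_n|_{W'}$ and $L_n|_{W/W'}$, and the off-diagonal block $B_n$ satisfies a recursion $B_n = a_n B_{n-1}+b_n D_{n-1}$ whose iteration, together with the finite first moment of $\mu$, yields $\|B_n\| \leq e^{n\max\{\lambda_1(W'),\lambda_1(W/W')\}+o(n)}$.

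As a warm-up I first show $\lambda_1(F_i) = \beta_i(V) = \lambda_1(F_i/F_{i+1})$ for every $i$. Picking any $v \in F_i\setminus (F_{i+1}\cup F_2(F_i))$---possible since a vector space is not the union of two proper subspaces---and applying Theorem~\ref{theorem.furstenberg_kifer}(1) both in $V$ and in $F_i$ viewed as its own representation gives $\beta_i(V)=\beta_1(F_i)=\lambda_1(F_i)$. Combining with the decomposition formula applied to $F_{i+1}<F_i$ together with $\lambda_1(F_{i+1})=\beta_{i+1}(V)<\beta_i(V)$, I conclude $\lambda_1(F_i/F_{i+1})=\beta_i(V)$.

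For part (1) I argue by contradiction: if $F_2(F_i/F_{i+1})\neq\{0\}$, let $H\subset F_i$ be its preimage under the quotient map, so that $F_{i+1}\subsetneq H\subsetneq F_i$ is $\Gamma_\mu$-invariant. Any $v \in H\setminus F_{i+1}$ lies in $F_i\setminus F_{i+1}$, so Theorem~\ref{theorem.furstenberg_kifer}(1) gives $\frac{1}{n}\log\|L_n v\|\to\beta_i(V)$, hence $\lambda_1(H)\geq\beta_i(V)$. On the other hand, the decomposition applied to $F_{i+1}<H$, combined with the warm-up (used both in $V$ and in $F_i/F_{i+1}$) and the identification $H/F_{i+1}=F_2(F_i/F_{i+1})$, gives
\[
\lambda_1(H)=\max\{\beta_{i+1}(V),\beta_2(F_i/F_{i+1})\}<\beta_i(V),
\]
a contradiction. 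For part (2), since the FKH flag is uniquely determined by property~(1) of Theorem~\ref{theorem.furstenberg_kifer}, it suffices to show that for $1\leq i\leq j$ and any $\bar v\in F_i/F_{j+1}$ with $\bar v\notin F_{i+1}/F_{j+1}$, one has $\frac{1}{n}\log\|L_n\bar v\|\to\beta_i(V)$ a.s. Fixing a lift $v\in F_i\setminus F_{i+1}$, the upper bound follows from $\|L_n\bar v\|\leq\|L_n v\|$ and Theorem~\ref{theorem.furstenberg_kifer}(1). For the lower bound I compose with the natural norm-decreasing projection $V/F_{j+1}\twoheadrightarrow V/F_{i+1}$ (well-defined as $F_{j+1}\subset F_{i+1}$); the image $L_n v+F_{i+1}$ lies in the subspace $F_i/F_{i+1}\hookrightarrow V/F_{i+1}$, and by part~(1) its intrinsic quotient norm grows at rate exactly $\beta_i(V)$. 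Equivalence of norms on the finite-dimensional space $F_i/F_{i+1}$ bridges this intrinsic norm and the ambient norm on $V/F_{i+1}$, yielding $\liminf\frac{1}{n}\log\|L_n\bar v\|\geq\beta_i(V)$. The last assertion $\beta_{\min}(V/F_{j+1})=\beta_j(V)$ then follows at once.

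The main subtlety is producing the lower bound for Lyapunov exponents under a quotient projection, which forces us to prove part~(1) before part~(2) and to invoke the decomposition formula for $\lambda_1$; the remaining inputs are bookkeeping with the FKH flags of the relevant subrepresentations.
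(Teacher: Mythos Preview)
Your proof is correct and follows essentially the same approach as the paper: both establish the decomposition $\lambda_1(W)=\max\{\lambda_1(W'),\lambda_1(W/W')\}$ (the paper cites \cite[Lemma 3.6]{furstenberg-kifer} for this, while you sketch it), prove part~(1) by pulling back a putative $F_2(F_i/F_{i+1})$ to a subspace of $F_i$ with contradictory top exponent, and verify part~(2) via the growth-rate characterization in Theorem~\ref{theorem.furstenberg_kifer}(1). The only cosmetic differences are that the paper reduces part~(1) to $i=1$ by replacing $V$ with $F_i$, whereas you handle general $i$ directly, and for the lower bound in part~(2) the paper phrases the argument in terms of the block-triangular matrix form while you phrase it as projecting further to $V/F_{i+1}$---these are the same computation.
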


\begin{proof}
\indent 1. Without loss generality $i=1$. We argue by contraction. If $F_2(V/F_2)\neq \{0\}$ there would exist a $\Gamma_\mu$-invariant non-trivial subspace $W'$ of $V/F_2$ whose top Lyapunov exponent is $<\lambda_1(V/F_2)=\lambda_1(V)$ (the latter identity follows from  \cite[Lemma 3.6]{furstenberg-kifer}). 
Denote by $\pi:V\to V/F_2$ the 
canonical projection and let  $V':=\pi^{-1}(W')$. The latter is  a $\Gamma_{\mu}$-invariant subspace of $V$ containing strictly $F_2(V)$ and  whose top Lyapunov is also $<\lambda_1$ (the last assertion follows for instance by representing the matrices in $W'$ by upper triangular by block matrices with $F_2(V)$ being one block). This contradicts Theorem \ref{theorem.furstenberg_kifer} (1).\\[3pt] 
\indent 2. The following is clearly a $\Gamma_{\mu}$-filtration of $V/F_{j+1}$: 
$V/F_{j+1}=F_1/F_{j+1}\supset F_2/F_{j+1}\supset \cdots F_{j}/F_{j+1}\supset F_{j+1}/F_{j+1}=\{0\}$. Let $i\in \{1,\cdots, j\}$. Denote for simplicity $W_i=F_{i}/F_{j+1}$. By Theorem \ref{theorem.furstenberg_kifer} (1), its top Lyapunov exponent is
$\beta_i(V)$. Also $\lambda_1(W_i/W_{i+1})=\lambda_1(F_i/F_{i+1})=\beta_i(V)$. 
Since by (1) $F_2(W_i/W_{i+1})=\{0\}$, 
by  writing the matrices in $W_i$ as upper triangular matrices by block with $W_{i+1}\subset W_i$ as one invariant block, and using   Theorem \ref{thm.no.lift} (1) in $W_i/W_{i+1}$, we see that the growth rate of $\|L_n v\|$ is $\lambda_1(W_i)=\beta_{i}(V)$ if $v\in W_i\setminus W_{i+1}$.  This being true for every $i=1,\cdots, j$, this  finishes the proof. 
\end{proof}

\section{Contracting case}\label{sec.contracting}

\subsection{The bundle structure behind Theorem \ref{thm.lift.exist.unique}}\label{subsec.bundle.structure}
Let $V$ be a finite dimensional real vector space and $W<V$ a subspace. Let $P_W<\GL(V)$ be the parabolic subgroup of $\GL(V)$ given by the stabilizer of the subspace $W$. We fix a complement $W'$ of $W$ in $V$, a basis of $V$ adapted to the decomposition $V=W\oplus W'$. Let $d=\dim(V)$ and $r=\dim(W)$. The matrix representation of each element $g$ in $P_W$ is of the form 
$g=\begin{bmatrix}A&B\\
0& C\end{bmatrix}$ with $A\in \GL_r(\R)$, $B\in M_{r,d}(\R)$ and $B\in \GL_{d-r}(\R)$.  The matrix $A$ represents the action of $g$ on $W$ and the matrix $C$ represents its action on $V/W$. In the sequel, whenever $g \in P_W$ is used, we will use this notation $A,B$ and $C$ without specifying the dependence on $g$ for simplicity in notation.

Fix a Euclidean structure on $V$ and endow $V/W$ with the associated Euclidean structure. The open subset $P(V)\setminus P(W)$ of $P(V)$ identifies then with the quotient  space $(S^1(V/W) \times W) / \{\pm 1\}$, where $S^1(V/W)$ denotes the unit sphere of $V/W$ and  $\{\pm 1\}$ acts as the scalar multiplication diagonally on both factors. We can concretely express this identification as follows: let $[\xi] \in P(V) \setminus P(W)$ and choose a representative $\xi \in V$. Let $\xi_W$ and $\xi_{W'}$ be such that $\xi=\xi_W + \xi_{W'}$. The identification then writes as $$[\xi]\mapsto (\frac{\xi_W}{||\xi_W||}, \frac{\xi_{W'}}{||\xi_{W}||})/\{\pm 1\}.$$ In the sequel, we will often work with a lift of the right-hand-side and ignore the quotient by $\{\pm 1\}$. This should not cause confusion.  
An element of $S^1(V/W)\times W$ will be usually denoted   $(\theta, t)$ with $\theta\in S^1(V/W)$ and $t$ for an element of $W$.

The previous identification induces a cocycle $\sigma: P_W \times S^1(V/W) \to \Aff(W)$ expressing the action on the product $S^1(V/W) \times W$. Namely,  
$$
g\cdot (\theta,t)=( g\cdot \theta, \sigma(g,\theta)(t)),$$
where for $g=\begin{bmatrix}A& B\\
0& C\end{bmatrix}$, we have  \begin{equation}\label{eq.cocycle.formula}
g\cdot \theta=\frac{C\theta}{||C\theta||} \quad \textrm{and}\quad
\sigma(g,\theta): t\mapsto 
\frac{At}{|| C\theta||} + \frac{B \cdot \theta}{\|C\theta\|}
.
\end{equation}

\bigskip

The following two subsections are devoted to the proof of Theorem \ref{thm.lift.exist.unique}, where we prove the uniqueness and existence assertions, respectively.

\subsection{Uniqueness} 

\begin{proposition}[Uniqueness]\label{prop.uniqueness}
Let $\mu$ be a probability measure on $\GL(V)$ with a finite first moment and  $W$ a $\mu$-invariant subspace. Then for every $\mu$-stationary ergodic probability measure $\overline{\nu}$ on $P(V/W)$ whose cocycle average satisfies $$\alpha(\overline{\nu}) >\lambda_1(W),$$ there exists at most one $\mu$-stationary lift $\nu$ on $P(V)\setminus P(W)$. 
\end{proposition}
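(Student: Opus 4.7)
The plan is to exploit the bundle structure of \S\ref{subsec.bundle.structure}: iterating the cocycle formula \eqref{eq.cocycle.formula} shows that for two bundle points $(\theta, t_1), (\theta, t_2)$ sharing the same base $\theta$, their images $L_n(b) \cdot (\theta, t_i)$ again share a common base and their fiber coordinates differ by
$$\frac{L_n(b)|_W (t_1 - t_2)}{\|L_n(b)|_{V/W} \theta\|}.$$
Under the hypothesis $\alpha(\overline{\nu}) > \lambda_1(W)$ this quantity decays exponentially $\beta$-a.s.\ for $\overline{\nu}$-a.e.\ $\theta$, and I would leverage this ``fiberwise contraction-on-average'' to rule out the existence of two distinct $\mu$-stationary lifts of $\overline{\nu}$.

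To justify the contraction, the numerator $\tfrac{1}{n} \log \|L_n|_W\|$ converges to $\lambda_1(W)$ $\beta$-a.s.\ by Furstenberg--Kesten applied to the induced random walk on $\GL(W)$. For the denominator, I would apply Birkhoff's ergodic theorem on $\mathbf{B} \times P(V/W)$ to the skew-product transformation that shifts $\mathbf{B}$ and simultaneously acts by $b_1$ on $P(V/W)$, equipped with the $\mu$-ergodic probability $\beta \otimes \overline{\nu}$, applied to the additive cocycle $(b, \theta) \mapsto \log(\|b_1 \theta\|/\|\theta\|)$; its Birkhoff sums telescope to $\log \|L_n(b)|_{V/W} \theta\|$ and their Ces\`aro means converge to $\alpha(\overline{\nu})$ by \eqref{eq.defn.cocycle.av.intro}.

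Given two hypothetical lifts $\nu_1, \nu_2$, I would disintegrate each over $\overline{\nu}$ as $\nu_i = \int \delta_\theta \otimes \nu_i^\theta \, d\overline{\nu}(\theta)$ and form the same-base coupling $\pi := \int (\nu_1^\theta \otimes \nu_2^\theta) \, d\overline{\nu}(\theta)$ on $(P(V) \setminus P(W))^2$. For any $f \in C_c(P(V) \setminus P(W))$, extended continuously by $0$ to the compact space $P(V)$ (hence uniformly continuous there), iterating stationarity $n$ times yields
$$\int f \, d\nu_1 - \int f \, d\nu_2 = \iint \bigl[f(L_n(b) x_1) - f(L_n(b) x_2)\bigr] \, d\beta(b) \, d\pi(x_1, x_2).$$
I would then bound the right-hand side by splitting the outer integral: outside a large compact $K \times K$ (chosen via tightness of $\pi$), use $\|f\|_\infty$; on $K \times K$, the fiber separation $\|t_1 - t_2\|$ is uniformly bounded, so the fiber contraction above together with dominated convergence and the uniform continuity of $f$ should force the integrand to $0$ as $n \to \infty$. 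Since this holds for every such $f$, we conclude $\nu_1 = \nu_2$.

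The main obstacle is this last step. Because the bundle fibers are non-compact Euclidean spaces, the exponential contraction of the fiber coordinate does not automatically translate into smallness of the $P(V)$-distance between $L_n(b) x_1$ and $L_n(b) x_2$: the latter degenerates when the iterated images drift toward $P(W)$, which is precisely the direction of escape in $P(V) \setminus P(W)$. Reconciling the almost-sure fiber contraction with the tightness of $\pi$ (to control the initial fiber coordinates) and the vanishing of $f$ near $P(W)$ (to suppress contributions from escaping trajectories) is the technical heart of the argument.
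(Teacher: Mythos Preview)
Your approach is essentially the paper's: disintegrate two hypothetical lifts over $\overline{\nu}$, exploit the affine cocycle \eqref{eq.cocycle.formula} on the fibres, show the linear part $\|A(L_n)\|/\|C(L_n)\theta\|$ decays exponentially a.s.\ for $\overline{\nu}$-a.e.\ $\theta$, and conclude by a truncation plus uniform-continuity argument. The paper isolates the fibrewise step as Lemma~\ref{lemma.convergence} and tests against product functions $g(\theta)h(t)$ with $h\in C_c(W)$ rather than your coupling, and it obtains the denominator growth from Theorem~\ref{theorem.furstenberg_kifer}(1) (using $\overline{\nu}(F_{j+1}(V/W))=0$) rather than Birkhoff on the skew product, but these are cosmetic differences.

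The obstacle you flag at the end, however, is illusory. Take the Euclidean structure with $W\perp W'$; a bundle point $(\theta',s)$ corresponds to $[\theta'+s]\in P(V)$ with $\|\theta'+s\|\ge 1$, and for two points with common base one has
\[
d_{P(V)}\bigl([\theta'+s_1],[\theta'+s_2]\bigr)\ \le\ \frac{2\,\|s_1-s_2\|}{\min(\|\theta'+s_1\|,\|\theta'+s_2\|)}\ \le\ 2\,\|s_1-s_2\|.
\]
So the embedding $S^1(V/W)\times W\hookrightarrow P(V)$ is globally $2$-Lipschitz in the fibre variable, \emph{uniformly} in the base and in the fibre location; drifting toward $P(W)$ only improves the constant. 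Once $\|s_1^{(n)}-s_2^{(n)}\|\to 0$ a.s., uniform continuity of the zero-extension of $f$ to $P(V)$ gives $|f(L_n x_1)-f(L_n x_2)|\to 0$ directly, and dominated convergence (bound $2\|f\|_\infty$) together with your truncation to $K\times K$ finishes the proof exactly as you sketched. The vanishing of $f$ near $P(W)$ is not needed.
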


For the proof, we will require the following lemma.

\begin{lemma}\label{lemma.convergence}
Keep the assumptions of Proposition \ref{prop.uniqueness} and denote by  $j\in \{1,\cdots, k\}$ the largest integer such that 
$\lambda_1(W)<\beta_j(V/W)$. Let $\theta
\in S^1(V/W)\setminus F_{j+1}(V/W)$. Then,  for any two probability measures $\nu_1$ and $\nu_2$ on $W$ and every function $f \in C_c(W)$, $$
\int_W f(\sigma(L_n, \theta)t) d\nu_1(t) -\int_W f(\sigma(L_n, \theta)t) d\nu_2(t) \underset{n \to \infty}{\overset{L^1(\mathbf{B}, \beta)}{\longrightarrow}} 0.
$$
\end{lemma}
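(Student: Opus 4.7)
The plan is to reduce the stated $L^1$-convergence to an almost-sure contraction property of the affine maps $\sigma(L_n,\theta)$. Since $W$ is $\Gamma_\mu$-invariant, $L_n$ is block upper-triangular in the decomposition $V = W \oplus W'$; denote by $A_n$, $C_n$, $B_n$ respectively the diagonal blocks on $W$ and on $V/W$ and the off-diagonal block of $L_n$, in the notation that precedes \eqref{eq.cocycle.formula}. Applying \eqref{eq.cocycle.formula} with $g = L_n$, a direct computation gives, for any $t, t' \in W$,
$$\sigma(L_n,\theta)(t) - \sigma(L_n,\theta)(t') = \frac{A_n(t-t')}{\|C_n\theta\|}.$$
Hence $\sigma(L_n,\theta)$ is an affine map with Lipschitz constant $M_n := \|A_n\|/\|C_n\theta\|$, and the heart of the lemma is to establish that $M_n \to 0$ $\beta$-a.s.

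To prove this a.s. contraction, I would combine two classical Lyapunov asymptotics. The Furstenberg--Kesten theorem applied to the random product on $W$ gives $\tfrac{1}{n}\log\|A_n\| \to \lambda_1(W)$ almost surely. On the other hand, $\theta \notin F_{j+1}(V/W)$ means $\theta \in F_i(V/W) \setminus F_{i+1}(V/W)$ for some $i \leq j$, so Theorem \ref{theorem.furstenberg_kifer}(1) applied to the induced product on $V/W$ yields $\tfrac{1}{n}\log\|C_n\theta\| \to \beta_i(V/W)$ almost surely. By the maximal choice of $j$, $\beta_i(V/W) \geq \beta_j(V/W) > \lambda_1(W)$, so $\tfrac{1}{n}\log M_n \to \lambda_1(W) - \beta_i(V/W) < 0$; in particular, $M_n \to 0$ $\beta$-a.s., exponentially fast.

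To conclude the $L^1$-statement, I would use Fubini to rewrite
$$\Delta_n := \int f(\sigma(L_n,\theta)t)\, d\nu_1(t) - \int f(\sigma(L_n,\theta)t)\, d\nu_2(t) = \iint \bigl[f(\sigma(L_n,\theta)t) - f(\sigma(L_n,\theta)t')\bigr]\, d\nu_1(t)\, d\nu_2(t'),$$
and then run a standard truncation-plus-uniform-continuity argument. Fix $\varepsilon > 0$ and choose $R$ large enough that each $\nu_i$ puts mass at most $\varepsilon$ on $\{t \in W : \|t\| > R\}$; pairs $(t,t')$ outside $\{\|t\|,\|t'\| \leq R\}$ contribute at most $4\varepsilon\|f\|_\infty$ to $|\Delta_n|$ uniformly in $n$. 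Inside this bounded square, the bound $\|\sigma(L_n,\theta)t - \sigma(L_n,\theta)t'\| \leq 2R\, M_n$ combined with the uniform continuity of $f$ (which has compact support) forces the integrand to tend to $0$ uniformly, a.s. Thus $\Delta_n \to 0$ $\beta$-a.s., and since $|\Delta_n| \leq 2\|f\|_\infty$ uniformly, dominated convergence upgrades this to convergence in $L^1(\mathbf{B},\beta)$.

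The only mild obstacle is the possible non-compactness of the supports of $\nu_1$ and $\nu_2$, which is handled by the truncation step; otherwise the argument is a direct translation of the hypothesis $\lambda_1(W) < \beta_j(V/W)$ through the cocycle formula \eqref{eq.cocycle.formula}, together with the classical Lyapunov asymptotics of Furstenberg--Kifer--Hennion.
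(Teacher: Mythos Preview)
Your proof is correct and follows essentially the same route as the paper: identify the linear part of $\sigma(L_n,\theta)$ as $A_n/\|C_n\theta\|$, use Furstenberg--Kesten and Theorem~\ref{theorem.furstenberg_kifer} to get almost-sure exponential contraction, rewrite the difference as a double integral over $\nu_1\otimes\nu_2$, truncate to handle non-compact supports, and finish with uniform continuity of $f$. The only cosmetic difference is that the paper bounds the $L^1$-norm $I_n$ directly (passing through convergence in probability of the Lipschitz constant), whereas you first establish $\Delta_n\to 0$ almost surely and then invoke dominated convergence via the uniform bound $|\Delta_n|\le 2\|f\|_\infty$; both variants are standard and equivalent here.
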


The proof uses the bundle structure \S \ref{subsec.bundle.structure}. Let us first fix our notation. Given an affine space $W$ and an affine map $T$ on $W$, we denote by $L(T)\in \GL(W)$ its linear part.  Here is an immediate property: for every $x,y\in W$: 
\begin{equation}\label{eq.affine.algebra1}
Tx-Ty=\textrm{Lin}(T)(x-y).
\end{equation}

\begin{proof}[Proof of Lemma \ref{lemma.convergence}]
   
By \eqref{eq.cocycle.formula}, the following holds for every $n\in \N$, 
\begin{equation}\label{eq.lin.part.composed}
\textrm{Lin}(\sigma(L_n(b),\theta))=\frac{A(L_n)}{||C(L_n)\theta||}.
\end{equation}
Since $\theta\not\in F_{j+1}(V/W)$,  Theorem \ref{theorem.furstenberg_kifer} shows that, almost surely, \begin{equation}
\label{eq.conva}
\limsup_{n \to \infty} \frac{1}{n} \log \|\textrm{Lin}(\sigma(L_n,\theta))\|\leq  \lambda_1(W)-\beta_j(V/W)<0.
\end{equation}
Thus there exists some $c_0>0$ such that 
\begin{equation}\label{eq.almost.convergence}
\beta\left\{b\in B : \|\textrm{Lin}(\sigma(L_n(b),\theta))\|\geq \exp(-n c_0)\right\}\underset{n\to +\infty}{\longrightarrow} 0.
\end{equation}
Let now $\epsilon>0$. Since $\nu_1$ and $\nu_2$ are probability measures on $W$, we can find some $M>0$ such that $(\nu_1\otimes \nu_2)(\{(t,s)\in W^2\,|\, |t-s|>M\})<\epsilon$. 
We write 
\begin{equation*}
\begin{aligned}
& \left| \int f(\sigma(L_n(b), \theta)t) d\nu_1(t) -\int f(\sigma(L_n(b), \theta)t) d\nu_2(t) \right| = \\
&\left| \iint \left(f(\sigma(L_n(b), \theta)t)   - f(\sigma(L_n(b), \theta)s) \right)d\nu_1(t)d\nu_2(s) \right|  \leq \\
&\leq  \left| \iint_{\{|t-s|
\leq M\}} f(\sigma(L_n(b), \theta)t)-f(\sigma(L_n(b), \theta)s)\,d(\nu_1\otimes \nu_2)(t,s) \right| + 2\|f\|_\infty \epsilon.
\end{aligned}
\end{equation*}
Thus, letting $$I_n:=\int_{B} d\beta(b)\left|
\int f(\sigma( L_n(b), \theta)t) d\nu_1(t) -\int f(\sigma( L_n(b), \theta)t) d\nu_2(t) \right|,$$
we get from the triangular inequality and Fubini's theorem, 

$$ I_n\leq \iint_{\{|t-s|\leq M\}}{\int_{B} d\beta(b)\left|
 f(\sigma(L_n(b), \theta)t)-f(\sigma(L_n(b), \theta)s)
 \right| d(\nu_1\otimes \nu_2)(t,s)}+2||f||_{\infty}\epsilon.
$$
 By \eqref{eq.almost.convergence}, we can find $n_0(\epsilon)\in \N$ such that for every $n\geq n_0(\epsilon)$, 
$$\beta\left\{b\in B : \|\textrm{Lin}
(\sigma(L_n(b),\theta))\|\geq \exp(-n c_0) \right\} <\epsilon,$$
By uniform continuity of $f$, there is some $\delta(\epsilon)>0$ such that $|f(x)-f(y)|<\epsilon$, whenever $|x-y|<\delta(\epsilon)$. Without loss of generality we can assume $\exp(-n_0 c_0)<\delta(\epsilon)/M$ so denoting
  $B'_{n,\epsilon}:=\{b\in B :  \|\textrm{Lin}(\sigma(L_n(b),\theta))\|<\delta(\epsilon)/M\}$, we have for every $n\geq n_0(\epsilon)$,   $$\beta(B'_{n,\epsilon})>1-\epsilon.$$
 Hence  for every $n\geq n_0(\epsilon)$, 
$$ I_n\leq \iint_{\{|t-s|\leq M\}}{\int_{B'_{n,\epsilon}} d\beta(b)\left|
 f(\sigma(L_n(b), \theta)t)-f(\sigma(L_n(b), \theta)s)
 \right| d(\nu_1\otimes \nu_2)(t,s)}+4||f||_{\infty}\epsilon.
$$
By definition of $B'_{n,\epsilon}$, we get from \eqref{eq.affine.algebra1} that for every $n\geq n_0(\epsilon)$,  $b\in B'_{n,\epsilon}$ and $(t,s)\in W^2$ such that $|t-s|\leq M$, 
$$\|\sigma(L_n(b),\theta)t - \sigma(L_n(b),\theta)s\|<\delta(\epsilon) .$$
By definition of $\delta(\epsilon)$, we deduce finally that for every $n\geq n_0(\epsilon)$,  
$$I_n\leq \epsilon+4||f||_{\infty}\epsilon.$$
\end{proof}

\begin{proof}[Proof of Proposition \ref{prop.uniqueness}]
In all the proof, we fix a $\mu$-stationary probability measure $\overline{\nu}$ on  $P(V/W)$ such that $\alpha(\overline{\nu})>\lambda_1(W)$.  Let $j\in \{1,\cdots, k\}$ be such that $\beta_j(V/W)=\alpha(\nu)$. Note that by Theorem \ref{theorem.furstenberg_kifer}, $\overline{\nu}(F_{j+1}(V/W))=0$. 
Each stationary probability measure $\eta$ on $X:=(S^1(V/W)\times W)/\{\pm 1\}\simeq P(V)\setminus P(W)$ has a (unique) $\mu$-stationary lift 
$\tilde{\eta}$ on the product space $\tilde{X}:=S^1(V/W)\times W$ which is   invariant under the involution $(\theta, t)\mapsto (-\theta, -t)$, namely 
$$\int_{\tilde{X}}{g(\theta, t) d\tilde{\eta}(\theta, t):=\int_X{\frac{g(\theta,t)+g(-\theta, -t)}{2} d\eta([
\theta, t])}}.$$
Similarly, $\overline{\nu}$ has a unique $\mu$-stationary and $\pm 1$-invariant lift $\tilde{\overline{\nu}}$ on $S^1(V/W)$. Clearly, for a probability measure $\eta$ on $X$, the projection and lifting operations commute. 

Hence, it enough to show that if 
  $\nu_1$ and $\nu_2$ are two $\mu$-stationary probability measures on the product space $\tilde{X}$ that project to $\tilde{\overline{\nu}}$, then $\nu_1=\nu_2$. 
Let then $\nu_1$ and $\nu_2$ be such probability measures and consider a continuous function  $f: \tilde{X}\to \R$ with compact support.   By stationarity, we have for every $n\in \N$,  
\begin{equation}
\begin{aligned}
&   \int_{\tilde{X}}{f(\theta, t) d\nu_1(\theta, t) -\int_{\tilde{X}} f(\theta, t) d\nu_2(\theta, t) }   =\\
 &   \int_{G} { \left[\int_{\tilde{X}} f(g\cdot (\theta, t)) d\nu_1(\theta, t) - \int_{\tilde{X}} f(g\cdot (\theta, t)) d\nu_2(\theta, t)\right] d\mu^{\ast n}(g) }= \nonumber
\\&   \int_{G} { \int_{S^1(V/W)} d\tilde{ \overline\nu}(\theta) \left[ \int_{W}  f(g\cdot (\theta, t)) 
d\nu_{1,\theta}(t) - \int_{W} f(g\cdot (\theta, t)) d\nu_{2,\theta}(t)\right]  d\mu^{\ast n}(g)} =\nonumber\\
&     \int_{S^1(V/W)}{d\tilde{ \overline\nu}(\theta) \int_{G} d\mu^{\ast n}(g)  \left[\int_{W}{ f(g\cdot (\theta, t)) d\nu_{1,\theta} (t)}  - \int_W{ f(g\cdot (\theta, t)) d\nu_{2,\theta} (t)}   \right] }=\nonumber\\
&  \int_{S^1(V/W)}{d\tilde{ \overline\nu}(\theta) \int_{\mathbf{B}} d\beta(b)  \left[\int_{W}{ f(L_n(b)\cdot (\theta, t)) d\nu_{1,\theta} (t)}  - \int_W{ f(L_n(b)\cdot (\theta, t)) d\nu_{2,\theta} (t)}   \right] }\nonumber
\end{aligned}\end{equation}
The probability measures $\nu_{i,\theta}$ on $W$  for $i=1,2$ and $\theta\in S^1(V/W)$ are the ones given by 
Rokhlin's disintegration theorem.    Fubini's theorem was used  in the fourth line. In the sequel, we assume   that the function $f$ is of the form 
$f((\theta, t))=g(\theta) h(t)$ with $g\in \mathcal{C}_c(S^1(V/W))$ and $h\in \mathcal{C}_c(W)$. 
We then have 
\begin{equation}
\begin{aligned}
&  \left| \int_X{f(\theta, t) d\nu_1(\theta, t) -\int_X f(\theta, t) d\nu_1(\theta, t) }   \right|\leq \nonumber \\
&     \int_{S^1(V/W)}{d\tilde{ \overline\nu}(\theta) \int_{\mathbf{B}} g(L_n(b) \theta) d\beta(b)  \left|\int_{W}{ h(\sigma(L_n(b), \theta)t) d\nu_{1,\theta} (t)}  - \int_W{ h(\sigma(L_n(b), \theta)t) d\nu_{2,\theta} (t)}   \right| } \nonumber\\
& \leq ||g||_{\infty}  \int_{S^1(V/W)}{d\tilde{ \overline\nu}(\theta) \int_{\mathbf{B}}  d\beta(b)  \left| \int_{W}{  h(\sigma(L_n(b), \theta)t) d\nu_{1,\theta} (t)}  - \int_W{ h(\sigma(L_n(b), \theta)t)   d\nu_{2,\theta} (t)}    \right| } 
\end{aligned}
\end{equation}
Applying Lemma \ref{lemma.convergence} for $\tilde{ \overline\nu}$-almost every $\theta\in S^1(V/W)$ and using the    dominated convergence, we obtain that the bound above tends to $0$ as $n \to \infty$ and hence that
$\int_X{f(\theta, t) d\nu_1(\theta, t)}=\int_X{f(\theta, t) d\nu_2(\theta, t)}$. A direct application of the locally-compact version of Arzela--Ascoli's theorem shows that the linear span of functions $f:\tilde{X}\to \R$ of the form $f(\theta, t)=g(\theta)h(t)$ with $g\in \mathcal{C}_c(S^1(V/W))$ and $h\in \mathcal{C}_c(W)$ is dense in $\mathcal{C}_c(X)$. This implies that $\int_{\tilde{X}}{f(\theta, t) d\nu_1(\theta, t)}=\int_{\tilde{X}}{f(\theta, t) d\nu_2(\theta, t)}$ for every $f\in \mathcal{C}_c(\tilde{X})$. Thus $\nu_1=\nu_2$.   
\end{proof}

\subsection{Existence}
We now turn to proving the existence assertion in Theorem \ref{thm.lift.exist.unique}. We state it as a separate statement below.

\begin{proposition}[Existence] \label{prop.existence}
Let $\mu$ be a probability measure on $\GL(V)$ with finite first moment and $W$ a $\mu$-invariant subspace. Then for every $\mu$-stationary ergodic probability measure $\overline{\nu}$ on $P(V/W)$ whose cocycle average satisfies $$\alpha(\overline{\nu}) >\lambda_1(W),$$ there exists a $\mu$-stationary lift $\nu$ on $P(V)\setminus P(W)$. 
\end{proposition}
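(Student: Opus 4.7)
The plan is to produce the lift via the Foster--Lyapunov recurrence criterion for random walks with finite first moment developed by B\'{e}nard--de Saxc\'{e} \cite{benard-desaxce}, applied to the fibered random walk on the bundle $\tilde{X} := S^1(V/W) \times W$ introduced in \S\ref{subsec.bundle.structure}. As a preliminary reduction, I would replace $V$ by a smaller $\mu$-invariant subspace so that $\lambda_1(V/W) = \alpha(\overline{\nu})$. Namely, let $j \in \{1,\ldots,k\}$ be such that $\alpha(\overline{\nu}) = \beta_j(V/W)$; Theorem \ref{theorem.furstenberg_kifer}(3) forces $\supp(\overline{\nu}) \subset P(F_j(V/W))$, so every lift must live in $P(V_j)\setminus P(W)$ for $V_j := \pi^{-1}(F_j(V/W))$. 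Working in $V_j$ from now on, we have $\lambda_1(V/W) = \beta_j(V/W) = \alpha(\overline{\nu})$ and therefore, since $\alpha(\overline{\nu}) > \lambda_1(W)$, also $\lambda_1(V) = \max(\lambda_1(W), \alpha(\overline{\nu})) = \alpha(\overline{\nu})$.

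As Lyapunov function on $\tilde{X}$ I would take the proper continuous map $\varphi(\theta, t) := \log(1 + \|t\|)$ (proper since $S^1(V/W)$ is compact). The central task is to establish a negative drift inequality
$$\int \varphi(\sigma(L_{n_0}, \theta)(t)) \, d\mu^{\ast n_0}(L_{n_0}) \leq \varphi(t) - n_0 c_0 \qquad \textrm{for } \|t\| \textrm{ large,}$$
for some $n_0$ sufficiently large and $c_0 > 0$ of order $\tfrac{1}{2}(\alpha(\overline{\nu}) - \lambda_1(W))$, in a fibrewise sense averaged over $\theta \sim \tilde{\overline{\nu}}$, where $\tilde{\overline{\nu}}$ denotes the natural lift of $\overline{\nu}$ to $S^1(V/W)$. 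Starting from the cocycle formula \eqref{eq.cocycle.formula}, for $\|t\| \geq 1$ one obtains
$$\varphi(\sigma(L_n, \theta)(t)) \leq \varphi(t) + \log \|A(L_n)\| - \log \|C(L_n)\theta\| + \log\!\bigl(2 + \tfrac{\|B(L_n)\theta\|}{\|C(L_n)\theta\|}\bigr).$$
The $\mu^{\ast n}$-average of the first additional term is $n\lambda_1(W) + o(n)$ by Furstenberg--Kesten applied to the $W$-block; the second has $\mu^{\ast n} \otimes \tilde{\overline{\nu}}$-average exactly $-n\alpha(\overline{\nu})$ via the cocycle relation and $\mu$-stationarity of $\tilde{\overline{\nu}}$; and the third is $o(n)$, a consequence of the reduction step above (which forces $\|L_n \cdot (0,\theta)\| \leq \|L_n\| \lesssim e^{n(\alpha(\overline{\nu})+o(1))}$ while $\|C(L_n)\theta\| \sim e^{n\alpha(\overline{\nu})}$ for $\tilde{\overline{\nu}}$-typical $\theta$) combined with Corollary \ref{corol.unif.exp} to ensure uniformity.

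Granting the drift, B\'{e}nard--de Saxc\'{e}'s criterion yields tightness of the Cesaro averages $\tfrac{1}{N}\sum_{n=1}^N \mu^{\ast n} \ast \delta_{x_0}$ on $P(V)\setminus P(W)$, for any starting point $x_0 \in P(V)\setminus P(W)$ projecting to a $\overline{\nu}$-generic $\overline{x}_0 \in P(V/W)$. Any weak limit $\nu$ is then $\mu$-stationary, supported in $P(V)\setminus P(W)$ (the negative drift prevents escape to the boundary $P(W)$), and by the pointwise ergodic theorem applied downstairs its projection must equal $\overline{\nu}$, producing the desired lift. The main obstacle is the drift estimate itself: the delicate ingredient is the translation term $\log(\|B(L_n)\theta\|/\|C(L_n)\theta\|)$, whose control crucially depends on the reduction to $V_j$ in order to equalize the growth rates of $\|L_n|_{V_j}\|$ and $\|C(L_n)\theta\|$ (thereby making this contribution subexponential). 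A secondary subtlety is converting the fibrewise $\tilde{\overline{\nu}}$-averaged drift into the precise form required by B\'{e}nard--de Saxc\'{e}'s theorem, which may necessitate either a disintegration of the random walk along $\tilde{\overline{\nu}}$ or a Lyapunov function tailored to the base dynamics.
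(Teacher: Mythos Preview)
Your approach shares the paper's core ingredients --- the Lyapunov function $\varphi(\theta,t)=\log(1+\|t\|)$ on the bundle and the B\'enard--de Saxc\'e recurrence machinery --- but it has a genuine gap that you correctly flag at the end as a ``secondary subtlety'' without resolving. It is in fact the main obstacle.

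After your reduction to $V_j$, you obtain $\lambda_1(V/W)=\alpha(\overline{\nu})$, but there is no reason for $\beta_{\min}(V/W)$ to exceed $\lambda_1(W)$. If $F_2(V/W)\neq\{0\}$, then for $\theta$ near $P(F_2(V/W))$ the term $-\log\|C(L_n)\theta\|$ grows only like $-n\beta_2(V/W)$, which can be far smaller in magnitude than $n\lambda_1(W)$, so the drift becomes \emph{positive} in an open region of the base. The B\'enard--de Saxc\'e criterion (their Lemma~2.2, as invoked in the paper) requires a pointwise negative-drift condition outside a compact set, not a $\tilde{\overline{\nu}}$-averaged one; a trajectory starting from a generic $\theta_0$ can wander arbitrarily close to $P(F_2(V/W))$ and experience long stretches of positive drift. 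There is no obvious disintegration or modified Lyapunov function that fixes this, since the bad set is $\Gamma_\mu$-invariant.

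The paper's resolution is a two-step manoeuvre you may find instructive. First, it \emph{enlarges} the invariant subspace from $W$ to $W_1:=\pi^{-1}(F_2(V/W))$. Then $V/W_1\simeq (V/W)/F_2(V/W)$ has $\beta_{\min}(V/W_1)=\lambda_1(V/W)=\alpha(\overline{\nu})$, while $\lambda_1(W_1)=\max\{\lambda_1(W),\beta_2(V/W)\}<\alpha(\overline{\nu})$. Now the drift is uniformly negative over the \emph{entire} base $S^1(V/W_1)$, and B\'enard--de Saxc\'e applies directly (this is Proposition~\ref{prop.existence.simplified}). This yields a lift $\nu$ to $P(V)\setminus P(W_1)$ of the pushforward $\overline{\overline{\nu}}$ of $\overline{\nu}$ to $P(V/W_1)$. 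The second step is to check that $\nu$ actually projects to $\overline{\nu}$ on $P(V/W)$, not merely to some other lift of $\overline{\overline{\nu}}$; here the paper invokes the already-established \emph{uniqueness} (Proposition~\ref{prop.uniqueness}) applied inside $P(V/W)$ with invariant subspace $F_2(V/W)$. So uniqueness is a crucial ingredient for existence, not merely a companion statement.
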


We first prove the following particular case. The general case (i.e.~ the previous proposition) will be deduced from the particular case by an inductive argument using additionally Proposition \ref{prop.uniqueness}. 

\begin{proposition}\label{prop.existence.simplified}
Let $\mu$ be a probability measure on $\GL(V)$ with finite first moment and $W$ a $\mu$-invariant subspace. Suppose that $\beta_{\min}(V/W)>\lambda_1(W)$. Then, for any $\mu$-stationary ergodic probability measure $\overline{\nu}$ on $P(V/W)$ there exists a $\mu$-stationary lift $\nu$ on $P(V)\setminus P(W)$.
\end{proposition}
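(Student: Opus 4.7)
The strategy I would follow is the Foster--Lyapunov / Krylov--Bogolyubov approach in the bundle picture of \S\ref{subsec.bundle.structure}: build a proper function on $P(V)\setminus P(W)$ whose Markov chain drift is negative near the ``boundary'' $P(W)$, deduce tightness of Cesàro averages starting from any point, and then extract a stationary limit projecting to the prescribed $\overline{\nu}$. Identifying (up to $\pm 1$) $P(V)\setminus P(W)$ with $\tilde X=S^1(V/W)\times W$, I would use the Lyapunov function
\[
\varphi(\theta,t) = \log(2+\|t\|),
\]
which is proper on $\tilde X$ and diverges precisely when $[\xi]$ approaches $P(W)$.

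The drift estimate is a direct computation from the cocycle formula \eqref{eq.cocycle.formula}: for every $g\in P_W$, $\theta\in S^1(V/W)$, $t\in W$,
\[
\|\sigma(g,\theta)t\|\leq\frac{\|A(g)\|\,\|t\|+\|B(g)\|}{\|C(g)\theta\|},
\]
whence, taking logarithms and isolating the $\|t\|$-factor,
\[
\varphi(g\cdot(\theta,t))-\varphi(\theta,t)\leq \log\frac{\|A(g)\|}{\|C(g)\theta\|}+R_g(\theta,t),
\]
where $R_g(\theta,t)$ is bounded in terms of $\log\|B(g)\|/\|C(g)\theta\|$ and tends to $0$ as $\|t\|\to\infty$ uniformly in $\theta$ (for fixed $g$). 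Averaging with respect to $\mu^{\ast n}$ and using (i) Kingman to get $\tfrac{1}{n}\int\log\|A\|\,d\mu^{\ast n}\leq \lambda_1(W)+\epsilon$ for $n$ large, and (ii) the uniform-in-$\theta$ lower bound $\tfrac{1}{n}\int\log\|C(g)\theta\|\,d\mu^{\ast n}(g)\geq \beta_{\min}(V/W)-\epsilon$ (Corollary \ref{corol.unif.exp} applied to the induced measure on $\GL(V/W)$), I get, since $\beta_{\min}(V/W)>\lambda_1(W)$, a constant $c>0$ and some $n_0$ with
\[
P^{n_0}\varphi(\theta,t)\leq \varphi(\theta,t)-n_0c+\epsilon_{n_0}(\theta,t),
\]
where $\epsilon_{n_0}(\theta,t)\to 0$ as $\|t\|\to\infty$ uniformly in $\theta$. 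Hence there is a compact set $K\subset\tilde X$ outside of which $P^{n_0}\varphi\leq\varphi-n_0c/2$; this is the desired drift condition.

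Given the drift condition, I would apply the Foster--Lyapunov recurrence criterion in the form worked out by B\'enard--de Saxc\'e to conclude that for every starting point $x\in P(V)\setminus P(W)$ the Cesàro averages $\nu_N^x:=\frac{1}{N}\sum_{k=1}^N\mu^{\ast k}\ast\delta_x$ form a tight family on $P(V)\setminus P(W)$ (they do not escape to $P(W)$). To match the projection, pick $\overline{x}\in P(V/W)$ that is $\overline{\nu}$-generic in the sense that $\frac{1}{N}\sum_{k=1}^N\mu^{\ast k}\ast\delta_{\overline{x}}\to\overline{\nu}$; by ergodicity of $\overline{\nu}$ and Breiman's theorem such points have full $\overline{\nu}$-measure, so one exists. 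Lift $\overline{x}$ arbitrarily to $x\in P(V)\setminus P(W)$; by tightness extract a weak-$\ast$ limit $\nu$ of some subsequence of $\nu_N^x$. Then $\nu$ is a probability measure on $P(V)\setminus P(W)$, $\mu$-stationary by construction, and, the projection being continuous, it pushes forward to $\overline{\nu}$.

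The main obstacle I expect is the verification of the drift condition uniformly in $\theta\in S^1(V/W)$, and more precisely the error term $\epsilon_{n_0}(\theta,t)$: one has to ensure that the terms involving $\log\|B\|/\|C\theta\|$ are absorbed in a way that is uniform in $\theta$, which is exactly where the uniform lower bound of Corollary \ref{corol.unif.exp} in the quotient is needed. The second subtle point is that the classical Foster--Lyapunov criterion usually assumes a finer moment structure; here the finite-first-moment setting forces the use of the logarithmic Lyapunov function and the B\'enard--de Saxc\'e refinement, which is precisely designed for this situation.
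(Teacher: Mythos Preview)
Your overall strategy matches the paper's proof exactly: the same bundle identification, the same logarithmic Lyapunov function on the fibre, the same tightness-plus-generic-point argument (the paper invokes Chacon--Ornstein where you invoke Breiman, which amounts to the same thing here). The difference lies in how the recurrence is verified.

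You establish a \emph{mean} drift inequality $P^{n_0}\varphi\leq\varphi-c$ outside a compact, using Corollary~\ref{corol.unif.exp} to get the uniform-in-$\theta$ lower bound $\tfrac{1}{n}\int\log\|C(g)\theta\|\,d\mu^{\ast n}(g)\geq\beta_{\min}(V/W)-\epsilon$. This works: the remainder $R_g(\theta,t)$ is dominated, for $\|t\|\geq 1$, by a function of the form $C+2\log N(g)$ (since $\|B\|,\|C\|\leq\|g\|$ and $\|A\|^{-1}\leq\|A^{-1}\|\leq\|g^{-1}\|$), which is integrable under the first-moment hypothesis, so dominated convergence gives $\int R_g\,d\mu^{\ast n_0}\to 0$ uniformly in $\theta$. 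With the mean drift in hand, the \emph{classical} Foster--Lyapunov criterion already yields tightness of the Ces\`aro averages --- you do not actually need the B\'enard--de Saxc\'e refinement here, contrary to what your last paragraph suggests.

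The paper instead verifies the probabilistic \textup{(SD)} condition of B\'enard--de Saxc\'e via their Lemma~2.2: it proves a high-probability drift (Claim~(A), using almost-sure convergence from Theorem~\ref{theorem.furstenberg_kifer} rather than the averaged Corollary~\ref{corol.unif.exp}) together with an a.s.\ dominating sequence $Z_n=\log 3+2\log N(L_n)$ (Claim~(B)). This route is more elaborate but yields, as a by-product, the almost-sure tightness of the empirical measures (part~(2) of Lemma~\ref{lemma.tight}), which is later used for Proposition~\ref{prop.equidistribution.contracting}(2). Your mean-drift route is more direct for the existence statement itself, but would not immediately give that pathwise conclusion.
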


The statement is an extension of the results of Benoist--Bru\`{e}re \cite{benoist-bruere} pertaining to the contracting case. The extension concerns both the algebraic assumption (we do not suppose irreducibility) and the moment assumption (we do not assume that the support of $\mu$ is bounded).

In the proof of the above proposition, we will make use of the notation introduced in \S \ref{subsec.bundle.structure}. For simplicity, we will also denote by the same symbol $\overline{\nu}$ a lift of $\overline{\nu}$ to $S^1(V/W)$. This should not cause confusion. We start by a lemma treating a fully contracted case (i.e.~ $\beta_{\min}(V/W)>\lambda_1(W)$) by using a version of Foster--Lyapunov recurrence criterion due to Bénard--de Saxcé \cite{benard-desaxce} that is well-adapted to our purposes.

\begin{lemma}\label{lemma.tight}
Suppose $\beta_{\min}(V/W)>\lambda_1(W)$. Then

\begin{enumerate}
\item for every $x \in X=P(V) \setminus P(W)$, the sequence $(\mu^{\ast n}\ast \delta_x)_{n\in \N}$ is tight in $X$. 

\item  for every $x\in X$,    the sequence $\frac{1}{n}\sum_{i=1}^n{\delta_{L_i(b)\cdot x}}$ is tight for $\beta$-almost every $b\in B$.
\end{enumerate}
\end{lemma}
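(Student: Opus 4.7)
My plan is to exhibit a proper Foster--Lyapunov function on $X=P(V)\setminus P(W)$ whose expected one-step increment becomes strictly negative outside a compact set, and then invoke the recurrence criterion of B\'enard--de Saxc\'e \cite{benard-desaxce}, whose finite-first-moment formulation is tailored to exactly this situation. Using the bundle identification $X \simeq (S^1(V/W)\times W)/\{\pm 1\}$ from \S\ref{subsec.bundle.structure}, I take as Lyapunov function
\[
\varphi(\theta,t) := \log(1+\|t\|),
\]
which is continuous, non-negative and proper on $X$: its sublevel sets are the sets $\{\|t\|\leq R\}$, which are relatively compact in $X$ (they avoid $P(W)$).

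From the cocycle formula \eqref{eq.cocycle.formula}, writing an element $g\in\GL(V)$ that preserves $W$ in block form $\begin{bmatrix}A & B\\ 0 & C\end{bmatrix}$ and setting $Y(g,\theta):=\|A\|/\|C\theta\|$ and $Z(g,\theta):=\|B\|/\|C\theta\|$, one has $\|\sigma(g,\theta)t\|\leq Y\|t\|+Z$, hence
\[
\varphi(g\cdot(\theta,t)) - \varphi(\theta,t) \;\leq\; \log\!\left(Y + \tfrac{1+Z}{1+\|t\|}\right).
\]
Applying this to $g=L_{n}(b)$, integrating against $\beta$, and letting $\|t\|\to\infty$, dominated convergence (the integrand being majorised by $\log(1+Y(L_n,\cdot)+Z(L_n,\cdot))$, which is $\beta$-integrable thanks to the finite first moment, since $Y,Z\leq N(L_n)^2$) yields
\[
\limsup_{\|t\|\to\infty} \int_{\mathbf{B}}\!\Big[\varphi(L_n(b)\cdot(\theta,t))-\varphi(\theta,t)\Big] d\beta(b) \;\leq\; \E[\log\|A(L_n)\|] - \E[\log\|C(L_n)\theta\|].
\]
By Furstenberg--Kesten, $\tfrac{1}{n}\E[\log\|A(L_n)\|]\to\lambda_1(W)$; by Corollary \ref{corol.unif.exp} applied to the image measure on $\GL(V/W)$, $\tfrac{1}{n}\E[\log\|C(L_n)\theta\|]\geq \beta_{\min}(V/W)-\epsilon$ uniformly in $\theta\in S^1(V/W)$ for all $n$ large. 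Since $\beta_{\min}(V/W)>\lambda_1(W)$ by hypothesis, taking $\epsilon$ small and $n_0$ large produces the desired uniform drift: there exist $c>0$ and $R>0$ such that for every $\theta\in S^1(V/W)$ and every $\|t\|\geq R$,
\[
\int_{\mathbf{B}}\varphi(L_{n_0}(b)\cdot(\theta,t))\,d\beta(b) \;\leq\; \varphi(\theta,t)-c.
\]

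Combined with the trivial bound $\int \varphi(L_{n_0}(b)\cdot x)\,d\beta(b) \leq \varphi(x)+C$ valid on the complementary region $\{\|t\|\leq R\}$ (a consequence of the finite first moment), this is precisely the input of the Foster--Lyapunov recurrence criterion of B\'enard--de Saxc\'e. Applied to the Markov chain on $X$ driven by $\mu^{*n_0}$, it delivers both tightness of $(\mu^{*n_0 k}*\delta_x)_{k}$ and almost sure tightness of $\tfrac{1}{k}\sum_{i=1}^k\delta_{L_{n_0 i}(b)\cdot x}$; a standard averaging over residues modulo $n_0$, using continuity of $\eta\mapsto\mu*\eta$, promotes these to assertions (1) and (2) of the lemma. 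The principal technical obstacle is securing uniformity of all the estimates in $\theta$: this is handled by Corollary \ref{corol.unif.exp} (for the lower bound on $\E[\log\|C(L_n)\theta\|]$) together with the observation that the dominating function $\log(1+Y(L_n,\cdot)+Z(L_n,\cdot))$ has $\beta$-integral bounded independently of $\theta$, again via the same corollary applied to the inverse representation.
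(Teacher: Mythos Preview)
Your approach coincides with the paper's in all essentials: the same proper function $\varphi(\theta,t)=\log(1+\|t\|)$, the same block estimates on $A$, $B$, $C$, and the same appeal to B\'enard--de~Saxc\'e. The substantive difference is \emph{which} drift condition you verify. You establish the classical mean-drift inequality $\int\varphi(L_{n_0}\cdot x)\,d\beta\leq\varphi(x)-c$ for $\|t\|\geq R$, uniformly in $\theta$ (and your use of Corollary~\ref{corol.unif.exp} for this uniformity is apt). The paper instead checks the hypotheses of \cite[Lemma~2.2]{benard-desaxce}: a drift \emph{in probability} (their Claim~(A): $\P\bigl(f(L_n\cdot x)-f(x)<-n\lambda\bigr)>1-\alpha$ whenever $\|t\|>e^{nR_0}$) together with a uniform integrable envelope (their Claim~(B): $f(L_n\cdot x)-f(x)\leq Z_n:=\log 3+2\log N(L_n)$ with $Z_n/n$ convergent in $L^1$). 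It is this pair that constitutes the (SD) criterion feeding \cite[Theorem~D]{benard-desaxce}.

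The distinction matters in the finite-first-moment setting. An additive mean-drift yields Ces\`aro tightness of $(\mu^{\ast n}\ast\delta_x)_n$ (enough, incidentally, for all downstream applications in the paper), but on its own does not deliver the pointwise tightness in~(1) nor the almost-sure empirical tightness in~(2): those typically require either a multiplicative contraction $P\varphi\leq a\varphi+b$ with $a<1$, or irreducibility/petite-set structure, neither of which is available here. The B\'enard--de~Saxc\'e machinery is built precisely to close this gap under a first moment, but its input is the probability/envelope pair, not the mean inequality you wrote. Fortunately your estimates already contain everything required: the pointwise bound you derive gives Claim~(B) immediately, and the almost-sure convergences of $\tfrac1n\log\|A(L_n)\|$ (Furstenberg--Kesten) and $\tfrac1n\log\|C(L_n)\theta\|$ (Theorem~\ref{theorem.furstenberg_kifer}) upgrade to the probability statement~(A). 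Recast your conclusion in those terms and the invocation of \cite{benard-desaxce} goes through.
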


\begin{proof}
We will first show that there exists $N_0 \in \N$ and a proper continuous  function $f:X \to \R_+$ with the property that for every $\epsilon>0$, there exists $R>0$ such that for every $x \in X$ there exists $n_x \in \N$ satisfying for every $n \geq n_x$ 
\begin{equation}\label{eq.prove.by.BdS}
\mu^{\ast n N_0} \ast \delta_x (f^{-1}([R,\infty)))<\epsilon.    
\end{equation}
Moreover, by continuity of $f$ the constant $n_x$ can be chosen to be bounded as $x$ varies in a compact set of $X$. To show these, let, for $x=(\theta,t)$, the function $f$ be defined by $f(x)=\log (\|t\|+1)$. This is a proper function on $X$. The inequality \eqref{eq.prove.by.BdS} will follow from \cite[Theorem D]{benard-desaxce} if we can show that the (SD) condition in \cite[\S 2.1]{benard-desaxce} is satisfied. To prove the (SD) condition, we check that the conditions of the (SD) criterion in \cite[Lemma 2.2]{benard-desaxce} is satisfied.
We start by showing the following:\\
\textit{Claim} (A): There exist $\lambda>0$ and $R_0>0$ such 
that for all $\alpha>0$, there exists $n_0\in \N$ for every $n\geq n_0$ and $x=(\theta,t)\in X$ with $\|t\|>\exp(n R_0)$,  \begin{equation}\label{eq.claimA}\P(f(L_n\cdot x) - f(x)<-n\lambda)>1-\alpha.\end{equation}
To prove the claim, given $\alpha>0$, let $n_0 \in \N$ be such that for every $n \geq n_0$ and $(\theta,x) \in X$, we have
\begin{equation}\label{eq.alpha.autour.lyap}
\P(\log \|A(L_n)\| \leq (\lambda_1(W)+\alpha)n)>1-\alpha/3
\end{equation}
and
\begin{equation}\label{eq.alpha.autour.betamin}
\P(\log \|C(L_n)\theta\| \geq (\beta_{\min}(V/W)-\alpha)n)>1-\alpha/3
\end{equation}
That such $n_0 \in \N$ exists, follows from Furstenberg--Kesten \cite{furstenberg-kesten} for the inequality \eqref{eq.alpha.autour.lyap} and Furstenberg--Kifer \cite{furstenberg-kifer} (Theorem \ref{theorem.furstenberg_kifer}) for \eqref{eq.alpha.autour.betamin}.
Moreover, since $\log \|B(L_n)\| \leq \log \|L_n\|$, again by Furstenberg--Kesten \cite{furstenberg-kesten}, we can suppose that for every $n \geq n_0$,
\begin{equation}\label{eq.B.lambda}
    \P(\log \|B(L_n)\| \leq (\lambda_1(V)+\alpha)n)>1-\alpha/3
\end{equation}

Putting \eqref{eq.alpha.autour.lyap}, \eqref{eq.alpha.autour.betamin}, and \eqref{eq.B.lambda} together, we deduce that for every $x=(\theta,t) \in X$ and $n \geq n_0$ on an event of probability at least $1-\alpha$, we have
\begin{equation}\label{equation.estimate.difference}
    \begin{aligned}
    &f(L_n \cdot x)-f(x)=\log
    \frac{ \frac{\|A(L_n)t+B(L_n)\theta\|}{\|C(L_n)\theta\|}+1}{\|t\|+1}\\
  &\leq \log \left( \frac{\|A(L_n)t\|}{\|C(L_n)\theta\| \|t\|} + \frac{\|B(L_n)\theta\|}{\|C(L_n)\theta\| (\|t\|+1)} + \frac{1}{\|t\|+1}
  \right)\\  &\leq \log \left(\exp\left(n(\lambda_1(W)-\beta_{\min}(V/W)+2\alpha)\right)+ \frac{\exp \left(n(\lambda_1(V)-\beta_{\min}(V/W)+2\alpha )\right)}{\|t\|} + \frac{1}{\|t\|}  \right)
  \end{aligned}
\end{equation}
Therefore, the claim (A) is easily seen to follow for any choice of $\lambda \in (0,\beta_{\min}(V/W)-\lambda_1(W))$ and $R_0>\lambda_1(V)-\lambda_1(W)$.\\

\textit{Claim} (B): There exists a sequence of integrable random variables $Z_n$ such that $Z_n/n$ converge in $L^1$ and such that the following holds almost surely: 
\begin{equation*}\forall x\in X, \, \, f(L_n \cdot x)-f(x) \leq Z_n\end{equation*}
Using $\frac{1}{\|g\|}\leq \|g\|^{-1}$ for every $g\in \GL_d(\R)$ and the fact that $N(g)\geq 1$, where, we recall $N(g)=\max\{ \|g\|, \|g^{-1}\|\}$, it follows from the middle estimate in \eqref{equation.estimate.difference} that the following almost sure inequality holds for every $x\in X$, $f(L_n\cdot x)-f(x)\leq  Z_n $, with $$Z_n:=\log 3+ 2\log N(L_n).$$
Now since $\mu$ has a first moment, it follows that each $Z_n$ is integrable and, by  Kingman's theorem, $Z_n/n$ converges almost surely and in $L^1$ to a real number. This proves Claim (B). 
 
Claims (A) and (B) imply  that   all conditions of \cite[Lemma 2.2]{benard-desaxce} are satisfied for the random walk on $X$ induced by  $\mu^{\ast N_0}$ for a certain $N_0$. Indeed, let $\lambda$ and $R_0$ be the constants given by Claim (A). Since the variables $Z_n$ given by Claim (B) converge  in $L^1$, there exist $\alpha>0$ and $n_1 \in \N$ such that for every $n\geq n_1$,  $\E(Z'_{n} \mathds{1}_{[0,\alpha]})\leq n\lambda (1-\alpha)$, where $Z'_{n_1}$ is the standard realisation of $Z_{n_1}$ in the sense of \cite{benard-desaxce}.
We can now apply Claim (A) with this $\alpha>0$, which yields some $n_0\in \N$ satisfying \eqref{eq.claimA}. Letting $N_0:=\max\{n_0,n_1\}$, for every $n\geq N_0$, conditions (1),(2) and (3) are satisfied for the Markov chain $(L_{k n}\cdot x)_{k\in \N}$ (with constants $n\lambda$, $n R_0$ and random variable $Z_{n}$) proving \eqref{eq.prove.by.BdS}.

Having established \eqref{eq.prove.by.BdS}, the rest follows from a standard argument. Namely, let $x\in X$ and  let $K$ be a compact subset of $X$ such that $\mu^{\ast k} \ast \delta_x (K)>1-\epsilon$ for every $k=0, \ldots, N_0-1$. Let $n_K=\max\{n_x : x \in K\}$, where $n_x$ is chosen minimally so as to satisfy \eqref{eq.prove.by.BdS}. Since $K$ is compact, $n_K$ is finite. Now, for every $n \geq N_0 N_K$, writing $n=\ell N_0 +k$ with $k \in \{0,\cdots, N_0-1\}$, we have $\mu^{\ast n}\ast \delta_x(f^{-1}([0,R)))= \mu^{\ast \ell N_0} \ast \mu^{\ast k}  \ast \delta_x(f^{-1}([0,R))) \geq (1-\epsilon)^2 \geq 1-2\epsilon$ proving  part (1) of the lemma.

Finally, to prove part (2) of the lemma, observe that the property (SD) that we have established above for the random walk driven by $\mu^{\ast N_0}$ implies, thanks to \cite[Theorem D (ii)]{benard-desaxce}, that for every $\epsilon>0$, there exists $R>0$ such that for every $x \in X$, almost surely $\frac{1}{n}\#\{k \in \{1,\ldots,n\}: f(L_{kN_0}\cdot x) - f(x) \geq R \}\leq \epsilon$. A standard argument now shows that the same holds for the $\mu$-random walk (instead of $\mu^{\ast N_0}$), see e.g.~\cite[Proposition 3.3.(ii)]{benard-desaxce}.
\end{proof}

\begin{proof}[Proof of Proposition \ref{prop.existence.simplified}]
Let $E \subseteq S^1(V/W)$ be the set of generic points of $\overline{\nu}$, i.e.
$$
E=\{\theta \in S^1(V/W) : \frac{1}{n} \sum_{k=1}^n \mu^{\ast k} \ast \delta_\theta \underset{n \to \infty}{\to} \overline{\nu}\}.
$$
By Chacon--Ornstein ergodic theorem, we have $\overline{\nu}(E)=1$ and let $\theta_0\in E$. Using Lemma \ref{lemma.tight} and, in it, taking $x=(\theta_0,t)$ for some $t \in W$, we obtain a $\mu$-stationary probability measure $\nu'$ on $X$ that projects to $\overline{\nu}$, as desired.
\end{proof}

We can now give the proof of the general existence result.

\begin{proof}[Proof of Proposition \ref{prop.existence}]
Let $\pi: V\to W$ be the canonical projection. 
Let $\overline{\nu}$ be a probability measure on $P(V/W)$ such that $\alpha(\overline{\nu})>\lambda_1(W)$. Let $F_{\overline{\nu}}$ be the subspace of $V/W$ generated by the support of $\overline{\nu}$. By replacing $V$ with $\pi^{-1}(F_{\overline{\nu}})$, we can assume without loss of generality that $\alpha(\overline{\nu})=\lambda_1(V/W)$ (see Theorem \ref{theorem.furstenberg_kifer} (iii)). Let $F_2$ be the first proper FKH subspace of $F_1=V/W$ so that $\overline{\nu}$ gives full mass to the open subset $P(V/W)\setminus P(F_2)$ of $P(V/W)$. Let $W_1=\pi^{-1}(F_2)$. This is  a $\Gamma_{\mu}$-invariant subspace of $V$ that contains $W$ and the quotient vector spaces $W_1/W$ and $(V/W)/F_2$ are  $\Gamma_{\mu}$-equivariantly isomorphic  respectively to $F_2$ and $V/W_1$.
For simplicity of notation, we will also denote by $\pi$ the map $P(V)\setminus P(W)\to P(V/W)$ induced by the projection $\pi: V \to V/W$. Let $\tilde{\pi}$ be the projection    $P(V)\setminus P(W_1)\to P(V/W_1)$.  Let also  $\pi_2$ be the canonical projection $P(V/W)\setminus P(F_2)\to P((V/W)/F_2) \simeq P(V/W_1)$.  
With the latter identification and setting $\pi_{|_{P(V)\setminus P(W_1)}}=\pi_1$, we have $\tilde{\pi}=\pi_2\circ \pi_1$, i.e.~ the following diagram is commutative: 
\begin{center}
\begin{tikzpicture}
\def\a{2.5} \def\b{2}

\path
(-\a,0) node (A) {$P(V)\setminus P(W_1)$}      
(\a,0) node (B) {$P(V/W_1)$}
(0,-\b) node[align=center] (C) {$P(V/W)\setminus P(F_2)$}
(\a+2.26,0) node (D) {$\simeq P((V/W)/F_2)$}
;
\begin{scope}[nodes={midway,scale=.75}]
\draw[->] (A)--(B) node[above]{\Large $\tilde{\pi}$};
\draw[->] (A)--(C.120) node[left]{\Large $\pi_{|_{P(V)\setminus P(W_1)}}=\pi_1$ ${}$ };
\draw[->] (C.60)--(B) node[right]{\Large ${}$ $\pi_2$};
\end{scope}
\end{tikzpicture}   
\end{center}
We denote by $\overline{\overline{\nu}}$ the probability measure $\pi_2 {}_\ast \overline{\nu}$ on $P((V/W)/F_2)\simeq P(V/W_1)$. By $\Gamma_\mu$-equivariance, $\overline{\overline{\nu}}$ is $\mu$-stationary. We aim to apply Proposition \ref{prop.existence.simplified} with $V$ as ambient space, $W_1$ as $\Gamma_{\mu}$-invariant space, $\overline{\overline{\nu}}$ as a $\mu$-stationary probability measure on the quotient. Let us check that, with these choices, the hypotheses of Proposition \ref{prop.existence.simplified} are satisfied. Indeed, by definition of  $F_2$, the only Furstenberg--Kifer--Hennion subspaces of $V/W_1$ are $V/W_1$ and $\{0\}$, i.e.~ $F_2(V/W_1)=\{0\}$. Thus $\beta_{\min}(V/W_1)=\lambda_1(V/W_1)=\lambda_1((V/W)/F_2)=\lambda_1(V/W)$ where the last equality follows from \cite[Lemma 3.6]{furstenberg-kifer}. Using the latter result once more, we have then  $\lambda_1(W_1)=\max\{\lambda_1(W), \lambda_1(W_1/W)\}=\max\{\lambda_1(W), \lambda_1(F_2)\}<\lambda_1(V/W_1)=\beta_{\min}(V/W_1)$. 
Therefore, we can apply Proposition \ref{prop.existence.simplified} and deduce that there exists a $\mu$-stationary probability measure $\nu$ on $P(V)\setminus P(W_1)$ such that $\tilde{\pi}_\ast \nu=\overline{\overline{\nu}}$. It remains to show that $\nu$ is a lift of $\overline{\nu}$, i.e.~ $\pi_1 {}_\ast \nu=\overline{\nu}$. 
Since $\tilde{\pi}=\pi_2\circ \pi_1$ and $\overline{\overline{\nu}}=\pi_2 {}_\ast  \overline{\nu}$, both measures $\pi_1 {}_\ast \nu$ and $\overline{\nu}$ on $P(V/W)\setminus P(F_2)$ are lifts of $\overline{\overline{\nu}}$. Applying the uniqueness result (Proposition \ref{prop.uniqueness})
with $V/W$ as ambient space, $F_2$ as an invariant subspace and $\overline{\overline{\nu}}$ as a stationary measure on the quotient $(V/W)/F_2$, we deduce that $\pi_1\ast \nu=\overline{\nu}$ as desired. Note that we can indeed apply Proposition \ref{prop.uniqueness} since we have on the one hand $\alpha(\overline{\overline{\nu}})=\lambda_1((V/W))/F_2)$ --- this follows from our previous observation that all stationary measures on $(V/W)/F_2$ have the same cocycle average which is necessarily $\lambda_1((V/W)/F_2)$ --- and on the other hand $\lambda_1((V/W))/F_2)>\lambda_1(F_2)$. 
\end{proof}

\subsection{Some consequences}\label{subsec.consequences.sec.3}

\begin{proof}[Proof of Proposition \ref{prop.equidistribution.contracting}]
(1) If the sequence $\eta_n:=\frac{1}{n}\sum_{i=1}^n{\mu^{\ast i} \ast \delta_x}$ converges weakly to $\nu$ then, by continuity of the projection map $\pi: P(V)\setminus P(W)\to P(V/W)$, the sequence $\pi \ast \eta_n=$
$\frac{1}{n}\sum_{i=1}^n{\mu^{\ast i} \ast \delta_{\overline{x}}}$ converges weakly to $\pi \ast \nu=\overline{\nu}$.  Conversely, suppose that $\pi \ast \eta_n$ converges to $\overline{\nu}$. 
Let $j \in \{1,\ldots,k\}$ be such that $\beta_j(V/W)=\alpha(\overline{\nu})$ and denote $W':=\pi^{-1}(F_{j+1}(V/W))>W$. 
Note that since $\overline{\nu}(F_{j+1}(V/W))=0$, $x\not\in W'$. 
By Corollary  \ref{corollary.FK2},  $\beta_{min}((V/W)/F_{j+1}(V/W))=\beta_j(V/W)=\alpha(\overline{\nu})$.  Using the $\Gamma_{\mu}$-equivariant isomorphism $(V/W)/F_{j+1}(V/W)\simeq V/W'$, we deduce that $\beta_{\min}(V/W')=\alpha(\overline{\nu})$. 
On the other hand, since $W'/W\simeq F_{j+1}(V/W)$, we deduce from Lemma \cite[Lemma 3.6]{furstenberg-kifer} that $\lambda_1(W')=\max\{\lambda_1(W), \beta_{j+1}(V/W)\}< \alpha(\overline{\nu})$. Hence $\beta_{\min}(V/W')>\lambda_1(W')$. 
Since $x\not\in P(W')$,   Lemma \ref{lemma.tight} yields the tightness of the 
 sequence $\frac{1}{n}\sum_{i=1}^n{\mu^{\ast i} \ast \delta_{x}}$ in $P(V) \setminus P(W')$. Now consider a limit point $\zeta$ of the sequence $\eta_n$. Let $(n_k)_{k\in \N}$ be an increasing sequence such that $\eta_{n_k}\to \zeta$ weakly. By tightness, $\zeta$ is a probability measure on $P(V)\setminus P(W')\subset P(V)\setminus P(W)$. Since by hypothesis $\pi \ast \eta_n\to \overline{\nu}$ weakly, we deduce from the   continuity of $\pi$ that  $\pi \ast \zeta=\overline{\nu}$. By Proposition \ref{prop.uniqueness}, we deduce that $\zeta$ is the unique lift $\nu$ of $\overline{\nu}$. Thus all limit points  of $\eta_n$ are the same, namely $\nu$.  This concludes the proof.\\
\indent (2) The forward direction is direct thanks to the equivariance of the projection. For the 
backward implication, by the same argument as in part (1), we find $W'>W$ such that $\beta_{\min}(V/W')>\lambda_1(W')$ and such that 
$x\notin P(W')$. Then, (2) of Lemma \ref{lemma.tight} yields the tightness of  the sequence of empirical means $\frac{1}{n}\sum_{i=1}^n 
\delta_{X_i \cdots X_1 \cdot x}$. Hence by Breiman's law of large numbers (see for example \cite[Corollary 2.4]{BQ.book}), any limit point 
$\nu_1$ is a $\mu$-stationary probability measure on $P(V)\setminus P(W)$. Moreover, any such limit 
point has $\overline{\nu}$ as projection on $P(V/W')$. But since there exists a unique lift of 
$\overline{\nu}$, this implies that  $\frac{1}{n}\sum_{i=1}^n \delta_{X_i \cdots X_1 \cdot x}$ converges.
\end{proof}

\begin{proof}[Proof of Corollary \ref{corol.sur.Fi/Fi+1}]
By Theorem \ref{theorem.furstenberg_kifer}, each $\mu$-stationary ergodic probability measure $\nu$ on $P(V)$ lives in some $P(F_i)\setminus P(F_{i+1})$. Also its projection  $\overline{\nu}$  on the quotient   $F_i/F_{i+1}$ satisfies   $\alpha(\overline{\nu})=\lambda_1(F_i)> \lambda_1(F_{i+1})$. The uniqueness assertion in Theorem \ref{thm.lift.exist.unique}  shows then that $\nu$ is  the unique lift of $\overline{\nu}$. 
\end{proof}

\begin{proof}[Proof of Proposition \ref{proposition.FKH}]
\begin{enumerate}[leftmargin=1cm]
\item {Denote by $\overline{\nu}$ the projection of $\nu$ on $P(F_i/F_{i+1})$. It is a $\mu$-stationary ergodic probability measure on $P(F_i/F_{i+1})$} and $\nu$ is its unique lift by Theorem \ref{thm.lift.exist.unique}. Let $x\in \supp(\nu)\setminus P(F_{i+1})$.   Since $\Gamma_{\overline{\mu}}$ acts irreducibly on $F_{i}/F_{i+1}$, we deduce from  
\cite[Theorem 1.5]{bq.compositio} that the sequence of probability measures $\frac{1}{n} \sum_{k=1}^n \mu^{\ast k} \ast \delta_{\overline{x}}$ converges to $\overline{\nu}$. 
By Proposition \ref{prop.equidistribution.contracting}, we get that the sequence $\frac{1}{n} \sum_{k=1}^n \mu^{\ast k} \ast \delta_{x}$ converges to $\nu$. Since this holds for every $x\in \supp(\nu)\setminus P(F_{i+1})$, we get that $\supp(\nu) \setminus P(F_{i+1})$ is $\Gamma_\mu$-minimal in $P(F_i)\setminus P(F_{i+1})$. 
\item The map is well-defined thanks to  (1). Let us show that it is injective. Let $\nu_1$ and $\nu_2$ be two $\mu$-stationary ergodic probability measures with $S:=\supp(\nu_1)=\supp(\nu_2)$. Then, by \cite[Theorem 1.5]{bq.compositio} $\pi(S)$ supports a unique $\mu$-stationary probability measure $\overline{\nu}$. Therefore $\pi_\ast \nu_1=\pi_\ast \nu_2=\overline{\nu}$. Then, the uniqueness assertion in Theorem \ref{thm.lift.exist.unique} implies in turn that $\nu_1=\nu_2$. It remains to prove its surjectivity. Consider a $\Gamma_{\mu}$-minimal set  $S\subset  P(F_i)\setminus P(F_{i+1})$ for some $i$. Without loss of generality, $i$ is minimal.  The projection $\pi(S)$ of $S$ on $P(F_i/F_{i+1})$ is a $\Gamma_{\mu}$-minimal subset of $P(V/W)$. By compactness $\pi(S)$ supports a $\mu$-stationary ergodic probability $\overline{\nu}$. By irreducibility of $F_i/F_{i+1}$, all 
stationary measures on $F_i/F_{i+1}$ have the same cocycle average. Hence $\alpha(\overline{\nu})=\beta_i>\beta_{i+1}=\lambda_1(F_{i+1})$. By Theorem \ref{thm.lift.exist.unique}, there exists a $\mu$-stationary lift $\nu$ on $P(F_i)\setminus P(F_{i+1})$. Let $x \in S$. By Proposition \ref{prop.equidistribution.contracting} and \cite[Theorem 1.5]{bq.compositio}, $\frac{1}{n} \sum_{k=1}^n \mu^{\ast k} \ast \delta_x$ converges to $\nu$. 
Since $S$ is a $\Gamma_{\mu}$-invariant closed subset of $P(F_i)\setminus P(F_{i+1})$,   Portemanteau theorem (applied in the $P(F_i)\setminus P(F_{i+1})$) insures that $\nu(S)=1$. Thus   $\supp(\nu)\setminus P(F_{i+1})\subset S$. Since by (ii)  $\Gamma_{\mu}$ acts minimally on $S$, we deduce that the latter inclusion is an equality. The surjectivity of the map follows. 
 \end{enumerate}
\end{proof}

\begin{remark}
In Proposition \ref{proposition.FKH},
the support of an ergodic $\mu$-stationary probability measure $\nu$ may not be minimal in $P(V)$. It is minimal if and only if the support of $\nu$ is compact in $P(F_{i_0})\setminus P(F_{i_0+1})$. 
\end{remark}

\begin{remark}[Invariance of cocycle average]\label{rk.contracting.lift.same.average}
In passing, we note that similar to the corresponding statement in Theorem \ref{thm.no.lift} (but perhaps less surprisingly), the unique lift $\nu$ of $\overline{\nu}$ satisfies $\alpha(\nu)=\alpha(\overline{\nu})$. 
Indeed, clearly, $\alpha(\overline{\nu})\leq \alpha(\nu)$. On the other hand, $\nu$ is a probability measure on $P(F_{\nu})\setminus P(W\cap F_{\nu})$ with $\lambda_1(W\cap F_{\nu})\leq \lambda_1(W)$ and  $F_{\nu}/(W\cap F_{\nu}) \simeq \pi(F_{\nu})=F_{\overline{\nu}}$  so that,  by \cite[Lemma 3.6]{furstenberg-kifer}, $\lambda_1(F_{\nu})=\lambda_1(F_{\nu}/(W\cap F_{\nu}))=
\alpha(\overline{\nu})$ and hence $\alpha(\nu)\leq \alpha(\overline{\nu})$. \end{remark}

\section{Expanding case}\label{sec.expanding}
This section is devoted to the proof of Theorem \ref{thm.mixed} and Corollaries \ref{corol.no.lift} $\&$ \ref{corol.umu.intro} (and their more general versions below) from the introduction. As explained in the introduction, we start the proof of Theorem \ref{thm.mixed} by first proving a particular case (except for the moment assumption) covering the purely expanding case, i.e.~ $\alpha(\overline{\nu})<\beta_{\min}(W)$.  

\begin{theorem}[Purely expanding case: lifts only come from invariant subspaces]\label{thm.no.lift}
Let $\mu$ be a probability measure on $\GL(V)$ with a finite first moment and $W$ a $\mu$-invariant subspace. Let $\overline{\nu}$ be a $\mu$-stationary and ergodic probability measure on $P(V/W)$ such that 
\begin{equation}\label{eq.antidom.sec}
\alpha(\overline{\nu}) <\beta_{\min}(W).   
\end{equation}
Then, the following are equivalent: 
\begin{itemize}
    \item[(i)] There exists a $\mu$-stationary lift $\nu$ of $\overline{\nu}$ on $P(V)\setminus P(W)$. 
\item[(ii)] There exists a $\Gamma_{\mu}$-invariant subspace $W'$ of $V$ in direct sum with $W$ such that $P((W'\oplus W)/W)$ is the projective subspace generated by $\overline{\nu}$.
\end{itemize}
In this case, there exists a unique $\mu$-stationary lift $\nu$ of $\overline{\nu}$ on $P(V)\setminus P(W)$ and it satisfies $\alpha(\nu)=\alpha(\overline{\nu})=\lambda_1(W')$.
\end{theorem}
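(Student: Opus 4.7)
The easy direction (ii)$\Rightarrow$(i), together with the cocycle-average equality, goes as follows: given a $\Gamma_\mu$-invariant subspace $W'$ in direct sum with $W$ such that $P((W+W')/W)$ is the projective subspace generated by $\overline{\nu}$, the canonical projection $\pi\colon V\to V/W$ restricts to a $\Gamma_\mu$-equivariant linear isomorphism $W'\to (W+W')/W$. Its inverse transports $\overline{\nu}$ to a $\mu$-stationary probability measure $\nu$ supported on $P(W')\subset P(V)\setminus P(W)$, yielding the lift, and the Furstenberg formula \eqref{eq.furstenberg.formula.1.intro} applied on $W'$ gives $\alpha(\nu)=\alpha(\overline{\nu})=\lambda_1(W')$.

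For the main direction (i)$\Rightarrow$(ii), I plan to implement the measure-valued martingale decomposition of Furstenberg \cite{furstenberg.non.commuting} in order to extract an invariant complementary subspace from the given lift $\nu$. With $R_n:=X_1\cdots X_n$, the sequence $((R_n)_\ast\nu)_n$ is a martingale in the space of probability measures on the compact projective space $P(V)$; it converges $\beta$-a.s.~weakly to a random measure $\nu^\infty_b$ with decomposition $\nu=\int_{\mathbf{B}}\nu^\infty_b\,d\beta(b)$. Since $\Gamma_\mu$ preserves the open set $P(V)\setminus P(W)$ and $\nu(P(V)\setminus P(W))=1$, taking expectations in $\nu^\infty_b(P(V)\setminus P(W))$ shows that $\nu^\infty_b$ is $\beta$-a.s.~concentrated in $P(V)\setminus P(W)$.

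The core step is to show that the fibre measures $\nu_\theta$ of the disintegration $\nu=\int\nu_\theta\,d\overline{\nu}(\theta)$ in the bundle coordinates of \S\ref{subsec.bundle.structure} are Dirac masses at a measurable section $\tau\colon\supp(\overline{\nu})\to W$. By \eqref{eq.lin.part.composed} and Theorem \ref{theorem.furstenberg_kifer}(1), the purely expanding hypothesis $\alpha(\overline{\nu})<\beta_{\min}(W)$ forces the linear part $A(L_n)/\|C(L_n)\theta\|$ of the fibre affine map $\sigma(L_n,\theta)$ to send every non-zero $v\in W$ exponentially to infinity: $\liminf (1/n)\log(\|A(L_n)v\|/\|C(L_n)\theta\|)\geq\beta_{\min}(W)-\alpha(\overline{\nu})>0$ almost surely for $\overline{\nu}$-typical $\theta$. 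Applying the stationarity identity $\int f\,d\nu=\int\E[f(L_n\cdot(\theta,t))]\,d\nu(\theta,t)$ to a test function $f\in C_c(P(V)\setminus P(W))$ and exploiting the dichotomy ``a fibre measure of positive diameter spreads to infinity under $\sigma(L_n,\theta)$, hence its mass leaks out of every compact set'' versus ``a Dirac fibre measure does not'', a transience-type argument forces $\nu_\theta=\delta_{\tau(\theta)}$ for $\overline{\nu}$-a.e.~$\theta$.

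Stationarity of $\nu$ together with \eqref{eq.cocycle.formula} then translates into the cocycle relation
\begin{equation*}
\tau(g\theta)=\frac{A(g)\tau(\theta)+B(g)\theta}{\|C(g)\theta\|}\qquad\text{for $\mu$-a.e.~$g$ and $\overline{\nu}$-a.e.~$\theta$}.
\end{equation*}
After positively-homogeneous extension of $\tau$ to the linear hull $V_0^\ast\subset V/W$ of $\supp(\overline{\nu})$, this relation is equivalent to the graph $W':=\{\tilde{\tau}(\theta)+\theta:\theta\in V_0^\ast\}\subset V$ being a $\Gamma_\mu$-invariant complement of $W$ in $V_0:=W+W'$, with $W\cap W'=\{0\}$ and $P((W+W')/W)=P(V_0^\ast)$ as required. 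The remaining point—that the measurable $\tilde{\tau}$ coincides with a genuine linear map $V_0^\ast\to W$—is a cohomological splitting statement for the short exact sequence $0\to W\cap V_0\to V_0\to V_0^\ast\to 0$ of $\Gamma_\mu$-modules, whose obstruction vanishes under the Lyapunov gap $\beta_{\min}(W\cap V_0)\geq\beta_{\min}(W)>\alpha(\overline{\nu})\geq\lambda_1(V_0^\ast)$ via absolute convergence of a Sylvester-type resolvent series; the same gap guarantees uniqueness of the solution, whence uniqueness of the lift. The main obstacle I anticipate is the ``fibre-measure-is-Dirac'' step, since extracting the invariant section $\tau$ from the transience/mass-escape dichotomy requires careful weak-topology estimates on the projective closure $P(V)$; the subsequent cohomological promotion of $\tau$ to a linear map is comparatively routine once the Lyapunov gap is in hand.
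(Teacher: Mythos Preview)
Your overall strategy diverges from the paper's, and the step you label ``comparatively routine'' is where the argument breaks. The Sylvester-type series you invoke does \emph{not} produce a $\Gamma_\mu$-invariant complement under the hypothesis $\beta_{\min}(W)>\alpha(\overline{\nu})$ alone: convergence of a resolvent $\sum A(R_n)^{-1}(\cdots)C(R_n)$ requires control on $\|A(R_n)^{-1}\|$, i.e.\ on the \emph{bottom} Lyapunov exponent $\lambda_{\dim W}(W)$, whereas $\beta_{\min}(W)$ only controls the growth of every \emph{fixed} vector. When $W$ is, say, strongly irreducible with a Lyapunov gap, one has $\beta_{\min}(W)=\lambda_1(W)$ while $\lambda_{\dim W}(W)$ can be far below $\alpha(\overline{\nu})$. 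Worse, even when such a series converges it yields an \emph{averaged} intertwiner, not one that works for every $g\in\Gamma_\mu$; the expanding affine recursion $x\mapsto 2x+b$ with non-degenerate $b$ already shows that the Lyapunov gap holds yet no invariant complement exists. So neither convergence nor the resulting map gives (ii). Granting your Dirac step, the assertion ``$\tau$ agrees with a linear map'' is \emph{equivalent} to $F_\nu\cap W=\{0\}$ (a nonzero element of $F_\nu\cap W$ is precisely a linear relation $\sum c_i\theta_i=0$ with $\sum c_i\tilde{\tau}(\theta_i)\neq 0$), and this equivalence is exactly the heart of the matter that your cohomological shortcut does not address. Uniqueness of measurable solutions (which you do obtain from the expansion) does not help here: it shows that \emph{if} a linear solution exists it equals $\tau$, but gives no existence.

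The paper bypasses the fibre-section formalism entirely and attacks $F_\nu\cap W=\{0\}$ directly, taking $W':=F_\nu$. The key device is Furstenberg's observation (Lemma~\ref{lemma.finite.union}): there exist finitely many minimal-dimension subspaces $W_1,\ldots,W_r$ with $\nu(\bigcup W_i)=1$ and $\Gamma_\mu$ permuting the $W_i$'s. After reducing to $W\subset F_\nu$, one passes to the finite-index subgroup stabilising each $W_i$, looks at a subsequential limit $\Pi$ of $R_{\tau(n)}/\|R_{\tau(n)}\|$ (hitting times into that subgroup), and uses $\lambda_1(F_\nu/W)<\beta_{\min}(W)$ to force $\mathrm{Im}(\Pi)\subset W$. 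Since $\sum W_i=F_\nu$ there is some $W_{i_0}$ with $\Pi|_{W_{i_0}}\neq 0$; minimality of $W_{i_0}$ then gives $\nu(\ker\Pi\cap W_{i_0})=0$, whence $\Pi_\ast(\nu|_{W_{i_0}})=(\nu_b)|_{W_{i_0}}$ forces $\nu_b(W\cap W_{i_0})>0$ and hence $\nu(W)>0$, a contradiction. This argument uses only the martingale limit $R_n\nu\to\nu_b$ and elementary linear algebra, and requires neither the fibre-Dirac statement nor any resolvent series.
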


For the proof, we will require the following version of a classical observation of Furstenberg \cite{furstenberg.non.commuting}.

\begin{lemma}\label{lemma.finite.union}
Let $\mu$ be a probability measure on $\GL(V)$ and $\nu$ a $\mu$-stationary  probability measure on $P(V)$. 
There exists finitely many subspaces $W_1,\cdots, W_r$ of $V$ such that $\bigcup_{i=1}^r{W_i}$ is $\Gamma_{\mu}$-invariant,  $\nu(\bigcup_{i=1}^r{W_i})=1$, $\nu(W_i)=\nu(W_j)$ for every $i,j\in \{1,\cdots, r\}$ and each $W_i$ is of minimal dimension (among subspaces of $P(V)$ charged by $\nu$). 

\end{lemma}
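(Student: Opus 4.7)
The plan is to adapt the now-classical structural argument due to Furstenberg. Introduce the minimal dimension
\[
d_{0}:=\min\{\dim W\,:\,W<V \text{ subspace with } \nu(W)>0\},
\]
well defined since $\nu(V)=1$, and the maximal mass at this dimension $\alpha:=\sup\{\nu(W)\,:\,\dim W=d_0\}$.

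First I would show that $\alpha$ is attained and that
\[
\mathcal{W}_\alpha:=\{W<V\,:\,\dim W=d_0,\,\nu(W)=\alpha\}
\]
is finite. Attainment follows from upper semi-continuity of $W\mapsto \nu(W)$ on the compact Grassmannian $\Gr_{d_0}(V)$: the embedding $W\mapsto P(W)\subset P(V)$ is continuous for the Hausdorff metric, and $A\mapsto\nu(A)$ is upper semi-continuous on closed subsets of $P(V)$ for the same metric. Finiteness is immediate from the minimality of $d_0$: any two distinct $W,W'\in\mathcal{W}_\alpha$ intersect in a subspace of strictly smaller dimension, which has $\nu$-mass $0$, so the masses $\nu(W)$ for $W\in\mathcal{W}_\alpha$ are essentially pairwise disjoint and must satisfy $|\mathcal{W}_\alpha|\cdot\alpha\leq 1$.

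The second step exploits stationarity. For $W\in\mathcal{W}_\alpha$, the relation
\[
\alpha=\nu(W)=\int\nu(g^{-1}W)\,d\mu(g)
\]
together with the bound $\nu(g^{-1}W)\leq\alpha$ (valid since $g^{-1}W$ is still a $d_0$-plane) forces $\nu(g^{-1}W)=\alpha$, i.e.~$g^{-1}W\in\mathcal{W}_\alpha$, for $\mu$-a.e.~$g$. The set $S\subset\GL(V)$ of elements permuting $\mathcal{W}_\alpha$ under $W\mapsto g^{-1}W$ is then a closed subsemigroup (it is the preimage of a finite, hence closed, subset of $\Gr_{d_0}(V)^{|\mathcal{W}_\alpha|}$ under a continuous map, and is trivially stable under composition) of full $\mu$-measure; hence $\Gamma_\mu\subset S$. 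Since each $g\in S$ acts injectively on the finite set $\mathcal{W}_\alpha$, it acts bijectively, so $\bigcup_{W\in\mathcal{W}_\alpha}W$ is $\Gamma_\mu$-invariant, and by construction every $W\in\mathcal{W}_\alpha$ carries the same $\nu$-mass $\alpha$.

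The remaining assertion $\nu(\bigcup_{W\in\mathcal{W}_\alpha}W)=1$ is the main substantive point and invokes ergodicity of $\nu$ (an assumption implicit in the intended applications, where this lemma is applied to ergodic lifts, as in Theorem~\ref{thm.no.lift}, or to their ergodic components). Granted ergodicity, the $\Gamma_\mu$-invariant set $A=\bigcup_{W\in\mathcal{W}_\alpha}W$ satisfies $\nu(A)\geq\alpha>0$; its indicator $\mathds{1}_A$ is then $\mu$-harmonic and hence $\nu$-a.e.~constant, forcing $\nu(A)=1$. Setting $r=|\mathcal{W}_\alpha|$ and listing $\mathcal{W}_\alpha=\{W_1,\ldots,W_r\}$, the equalities $\nu(W_i)=\alpha=1/r$ fall out from the pairwise essential disjointness established in the first step.
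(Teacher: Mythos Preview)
Your argument is essentially the classical Furstenberg argument and matches the paper's proof: define the minimal dimension, the maximal mass $\alpha$ at that dimension, show the family $\mathcal{W}_\alpha$ is finite via pairwise essential disjointness, and use stationarity plus maximality of $\alpha$ to get $\Gamma_\mu$-invariance. Your attainment of $\alpha$ via upper semi-continuity on the Grassmannian is a slight variation; the paper implicitly uses that disjointness forces only finitely many $W$ with $\nu(W)\geq\alpha/2$, so the sup is a max over a finite set.

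You are right to flag ergodicity for the full-mass assertion $\nu(\bigcup W_i)=1$. The paper's proof in fact stops after establishing finiteness and $\Gamma_\mu$-invariance of $\mathcal{W}_\alpha$ and does not prove the full-mass claim; as you note, without ergodicity it can fail (e.g.\ $\mu=\delta_{\id}$, $\nu=\tfrac12\delta_{[e_1]}+\tfrac14\delta_{[e_2]}+\tfrac14\delta_{[e_3]}$ gives $\mathcal{W}_\alpha=\{\langle e_1\rangle\}$). In the paper's applications (Theorem~\ref{thm.no.lift}, Remark~\ref{rem.sum}) the measure $\nu$ can be taken ergodic --- a lift of an ergodic $\overline{\nu}$ has all its ergodic components projecting to $\overline{\nu}$, so one may replace $\nu$ by an ergodic component --- which is exactly the repair you propose.
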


\begin{proof}
Let $E:=\{ W \leq V \, | \, \nu([W])>0 \}$. This is a non-empty subset of subspaces of $V$. Let $r:=\min\{\dim(W)\,|\,W\in E\}$, $E':=\{W\in E\,|\dim(W)=r\}$,  $\alpha:=\sup\{\nu([W])\,|\, W\in E'\}$ and $F:=\{W\in E'\,|\, \nu([W])=\alpha\}$. By definition of $r \in \N$, $\nu([W \cap W'])=0$ for every $W\neq W'\in E'$. Thus $1 \geq \nu(\bigcup_{W\in F} W)=\sum_{W\in F}{\nu(W)}=\sum_{W\in F}{\alpha}$. Thus $F$ must be finite. By stationarity, for any $W \in F$, $\alpha=\nu(W)=\int \nu(g^{-1}W) d\mu(g)$ and hence, by maximality of $\alpha$, we deduce that for $\mu$-almost  every $g$, $gW \in F$. This proves the claim. 
\end{proof}

\begin{remark}\label{rem.sum}
In the setting of previous lemma, let $\nu$ be a $\mu$-stationary and ergodic probability measure on $P(V)$ and  $W_1,\cdots, W_r$ the finite subspaces given by that lemma.    Then $W_1+\cdots +W_r=F_{\nu}\subset F_{i(\nu)}$, where $F_{\nu}$ and $F_{i(\nu)}$ are defined in \S \ref{subsec.preliminaries.FK}. Indeed, by minimality, each $W_i$ is contained in $F(\nu)$ and $W_1+ \cdots +W_r$ is a $\Gamma_{\mu}$-invariant subspace of $P(V)$ charged by $\nu$. 
\end{remark}

\textit{Proof of Theorem \ref{thm.no.lift}.}
We will prove that $F_\nu \cap W=\{0\}$, which will show the direction $(i) \Longrightarrow (ii)$ by taking $W':=F_{\nu}$. The other implication is immediate. 
Denote by $\pi: V\to V/W$ the canonical projection and let  $\nu$ be a $\mu$-stationary probability measure on $P(V) \setminus P(W)$ such that $\pi_\ast \nu=\overline{\nu}$.  Denote by $\{W_1,\cdots, W_r\}$ the subspaces defined in Lemma \ref{lemma.finite.union} applied with $\nu$. 
 Necessarily $\pi(F_\nu)=F_{\overline{\nu}}\subset F_{i(\overline{\nu})}$.  
Without loss of generality, we can suppose 
that $V=W+F_\nu$ so that $\lambda_1(V/W)=\lambda_1(\pi(F_\nu))=
\lambda_1(F_{\overline{\nu}})=
\alpha(\overline{\nu})$.  
 
\begin{enumerate}[leftmargin=1cm]
\item First, we eliminate the case  $W\subset F_\nu$ (i.e.~ $V=F_\nu$).  For a contradiction, suppose $W\subset  F_\nu$. Since $\nu([W])=0$, $W$ is necessarily a proper subspace of $F_\nu$.  Up to passing to a subset $\mathbf{B}'$ of $\mathbf{B}$ of $\beta$-full mass, we know by Furstenberg \cite{furstenberg.non.commuting} that for every $b \in \mathbf{B}'$ there exists a probability measure $\nu_b$ on $P(F_\nu)$ such that  $R_n(b) \nu \longrightarrow \nu_b$ weakly.
Let $H_\nu$ be intersection of stabilizers of $W_i$ for $i=1,\ldots,r$. Clearly, $H_\nu$ is a finite index subgroup of the group $G_\mu$ generated by $\Gamma_\mu$ and let $\tau(1) < \tau(2) <\ldots$ be the sequence of hitting times of $H_\nu$, i.e.~ for every $n\in \N$ and $b \in \mathbf{B}'$, $R_{\tau(n)}(b)\in H_{\mu}$. Note that since $H_\nu<G_\mu$ is finite index, for each $n \in \N$, $\tau(n)$ is almost surely finite (and even has finite exponential moment).
Passing to a subsequence, we can then assume that
\begin{equation}\label{eq.to.Pi}
\frac{R_{\tau(n)}(b)}{||R_{\tau(n)}(b)||}\to \Pi \in \Endo(F_\nu).    
\end{equation}
Since $\lambda_1(F_\nu/W)=\alpha(\overline{\nu})<\beta_{min}(W)\leq \lambda_1(W)$, then necessarily  $\im(\Pi)\subset W$ (this follows for example by representing the elements in $\Gamma_{\mu}$ as upper triangular by block matrices $\begin{bmatrix}A&B\\
0& C\end{bmatrix}$ with $A$ representing the action on $W$ and $C$ the action on $F_{\nu}/W$). 
We claim that     \begin{equation}\label{equation.key}\exists i_0 \in \{1,\cdots, r\} \; \; \text{such that} \; \; \nu([\ker(\Pi) \cap W_{i_0}])=0\end{equation}
Indeed,  $W_1+\cdots +W_r=F_\nu$ (see Remark \ref{rem.sum}). Since $\Pi\neq 0$ (as $\|\Pi\|=1$), we conclude that there exists some $i_0$ such that $\Pi_{|_{W_{i_0}}}\neq 0$. Identity \eqref{equation.key} follows then from the minimality of $W_{i_0}$ among the set subspaces with positive $\nu$-mass. Up to reindexing, denote $W_1=W_{i_0}$.
 
Let us finally reach a contradiction from \eqref{equation.key}. Denote by $\nu_{|_{W_1}}$ the restriction measure to $W_{1}$. Since $\nu=\int{\nu_b d\beta(b)}$ (see \cite{furstenberg.non.commuting}), then possibly by replacing $\mathbf{B}'$ by a further subset of full measure, we can assume that $\nu_b(W_{i})>0$ for every $i=1,\cdots, r$ and $b \in \mathbf{B}'$. Let then $(\nu_b)_{|_{W_{1}}}$ denote the 
restriction of $\nu_b$ to $W_{1}$. Since $R_{\tau(n)}(b)$ stabilizes $W_{1}$ for every $n\in \N$, $R_{\tau(n)} \nu_{|_{W_{1}}} \to (\nu_b)_{|_{W_{1}}}$ weakly. On the other hand, using \eqref{eq.to.Pi} and \eqref{equation.key},   one has also that $R_{\tau(n)} \nu_{|_{W_{1}}}\to \Pi \nu_{|_{W_{1}}}$. Hence $\Pi \nu_{|_{W_{1}}}=(\nu_b)_{|_{W_{1}}}$. 
Hence $\supp((\nu_b)_{|_{W_{1}}})\subset W_{1}\cap W$. Thus $\nu_b([W_{1}\cap W])=\nu_b([W_{1}])>0$. Since $\nu=\int{\nu_b d\beta(b)}$, we conclude that $\nu([W\cap W_{1}])>0$, contradicting  $\nu([W])=0$. 
    
\item Let now $V_1:=F_{\nu}$ and $W_1:=F_{\nu}\cap W$.  The stationary measure $\nu$ lives in $P(V_1)\setminus P(W_1)$. Since   $\pi(F_{\nu})=V_1/W_1$ has a canonical $\Gamma_{\mu}$-equivariant embedding in $V/W$, we can identify  $F_{\overline{\nu}}$ with $P(V_1/W_1)$. Moreover, since $W_1$ is a $\Gamma_{\mu}$-invariant subspace of $W$,   $\beta_{\min}(W_1)\geq \beta_{\min}(W)>\alpha(\overline{\nu})$,  unless $W_1=\{0\}$.  Applying Case (i)  to $V_1$, $W_1$, $\nu$ and $\overline{\nu}$ shows that $W_1=\{0\}$ as desired. 
\end{enumerate}
This shows the equivalence between statements (i) and (ii) of the theorem. Now we show the last statement. If $\nu$ is a $\mu$-stationary probability measure on $P(V)\setminus P(W)$ that lifts $\overline{\nu}$, then by (ii), necessarily $\alpha(\nu)=\alpha(\overline{\nu})$ (since $F_{\nu}$ is a $\Gamma_{\mu}$-invariant complement of $W$ and is $\Gamma_{\mu}$-equivariant isomorphic to $F_{\overline{\nu}}$) .
Finally, to see the uniqueness claim, let $\nu'$ be another lift of $\overline{\nu}$ in $P(V) \setminus P(W)$. By the same argument as in the beginning of the proof, we have $\pi(F_{\nu'})=F_{\overline{\nu}}=\pi(F_{\nu})$. Since $F_\nu$ (and similarly $F_{\nu'}$) are in direct sum with $W$, this implies that  $W \oplus F_\nu= W \oplus F_{\nu'}$. This implies $F_\nu=F_{\nu'}$ and hence that $\nu=\nu'$ (as $F_{\nu}$ is $\Gamma_{\mu}$-isomorphic to $F_{\overline{\nu}}$). Indeed, if not, we can find $v' \in F_{\nu'}$ such that $v'=w+v$ with $w \in W \setminus \{0\}$ and $v \in F_\nu$. Since $V=W\oplus F_{\nu}$ is a $\Gamma_{\mu}$-invariant decomposition, we have that almost surely 
$\lim \frac{1}{n}\log ||L_n v'||\geq \lim \frac{1}{n}\log ||L_n w||$. Since $w\neq 0$, we have by definition of $\beta_{\min
}(W)$ (see Theorem \ref{theorem.furstenberg_kifer})  that 
almost surely 
$$\lim \frac{1}{n}\log \|L_n v'\|\geq \beta_{\min}(W).$$
On the other hand, since $v'\in F_{\nu'}$ and $F_{\nu'}$ is
a $\Gamma_{\mu}$-invariant subspace of $V$ with top Lyapunov exponent equal to $\alpha(\nu')=\alpha(\overline{\nu})$, we have 
$$\lim \frac{1}{n}\log \|L_n v'\|\leq \alpha(\overline{\nu}).$$
This contradicts $\alpha(\overline{\nu})<\beta_{\min}(W)$. \hfill \qed
\vspace*{0.2cm}


\noindent\textit{Proof of Theorem \ref{thm.mixed}.} 
Let $\overline{\nu}$ be as in the statement.\\ 
$(ii) \implies (i)$: Suppose that such a $W'$ exists and let $V':=W+W'$.  Then $V'/F_{j+1}(W)=W/F_{j+1}(W)\oplus W'/F_{j+1}(W)$. The $\Gamma_{\mu}$-invariant subspace $W'/F_{j+1}(W)$ is $\Gamma_{\mu}$-equivariantly isomorphic to $V'/W$ and hence $\overline{\nu}$ lifts to a $\mu$-stationary probability measure $\nu_1$ on $W'/F_{j+1}(W)$.  Clearly, $\alpha(\nu_1)=\alpha(\overline{\nu})=\lambda_1(V'/W)$, where the last equality is due to the fact that $P(V'/W)$ is the subspace generated by the support of $\overline{\nu}$. Since $\alpha(\overline{\nu})
>\beta_{j+1}(W)$, Theorem \ref{thm.lift.exist.unique} (contracting case) applied to $W'$ as ambient space, $F_{j+1}(W)$ as invariant subspace and   $\overline{\nu}$ as a $\mu$-stationary probability measure on the quotient, yields a $\mu$-stationary lift of $\nu_1$ on $P(W')\setminus P(F_{j+1}(W))\subset P(V)\setminus P(W)$ and clearly, $\nu_1$ projects to $\overline{\nu}$. 

$(i) \implies (ii)$: Suppose there exists a lift $\nu$ of $\overline{\nu}$ in $P(V) \setminus P(W)$. Let $F_{\overline{\nu}}$ be the subspace of $V/W$ generated by the support of $\overline{\nu}$ and let $V \geq V_r>W$ be its pre-image in $V$. Clearly, $\nu$ is supported in $P(V_r)$ and hence it is a lift of $\overline{\nu}$ to $P(V_r) \setminus P(W)$. Since $\nu$ gives zero mass to $P(W)$ and hence to $P(F_{j+1}(W))$, it projects to a measure $\nu_1$ on $P(V_r/F_{j+1}(W))$ that gives zero mass to $P(W/F_{j+1}(W))$. The push-forward of $\nu_1$ by the natural projection $V_r/F_{j+1}(W) \to V_r/W$ is precisely $\overline{\nu}$. This means that there is a lift $\nu_1$ of $\overline{\nu}$ from the projective space of $V_r/W\simeq (V_r/F_{j+1}(W))/(W/F_{j+1}(W))$ to $P(V_r/F_{j+1}(W)) \setminus P(W/F_{j+1}(W))$. However, by 2.~ of Corollary \ref{corollary.FK2}, $\beta_{\min}(W/F_{j+1}(W))=\beta_{j}(W)>\alpha(\overline{\nu})$ and hence the hypotheses as well as (i) of Theorem \ref{thm.no.lift} is satisfied. This result then implies that there is a $G_\mu$-invariant subspace $W'<V_r$ containing $F_{j+1}(W)$ such that $W'/F_{j+1}(W)$ and $W/F_{j+1}(W)$ are in direct sum and the subspace generated by $\overline{\nu}$ is
$P((V_r/F_{j+1}(W))/(W/F_{j+1}(W)))\simeq P(W+W'/W)$. This completes the proof of $(i) \iff (ii)$.
It remains to prove the additional claims. So suppose, $(i)$ and $(ii)$.  We have the following diagram:
\begin{wrapfigure}{l}{0.65\textwidth}
\begin{tikzpicture}
\def\a{2} \def\b{2}
\path
(0,\a) node (A) {$P(V_r) \setminus P(W)$}      
(0,0) node (B) {$P(V_r/F_{j+1}W)) \setminus P(W/F_{j+1}(W))$}
(0,-\a) node[align=center] (C) {$P(V_r/W)$}
(3.2,\a) node (D) {$\nu$}
(3.2,0) node (E) {$\nu_1$}
(3.2,-\a) node (F) {$\overline{\nu}$}
(5.2,1) node (G) {Theorem \ref{thm.lift.exist.unique}}
(5.2,-1) node (H) {Theorem \ref{thm.no.lift}}
;
\begin{scope}[nodes={midway,scale=.75}]
\draw[->] (A)--(B) node[right]{};
\draw[->] (B)--(C) node[left]{};
\draw[->] (D)--(E) node[left]{};
\draw[->] (E)--(F) node[left]{};
\draw [thick, -latex] (F) to [bend right] (D);
\end{scope}
\end{tikzpicture}   
\end{wrapfigure}
The $\mu$-stationary probability measure $\overline{\nu}$ determines uniquely the stationary measure $\nu_1$ by Theorem \ref{thm.no.lift}, which itself has a unique $\mu$-stationary lift on $P(V_r)\setminus P(W)$ thanks to Theorem \ref{thm.lift.exist.unique}.  The uniqueness claim follows. Finally, $\alpha(\overline{\nu})=
\alpha(\nu_1)=\alpha(\nu)$, where the first equality follows from Theorem \ref{thm.no.lift} and the second one by Remark \ref{rk.contracting.lift.same.average}. \qed

Here is a consequence which is a more general version of Corollary \ref{corol.no.lift} from the introduction. 

\begin{corollary}\label{corol.mixed.intro} Suppose  $\lambda_1(V/W)<\lambda_1(W)$ and that Furstenberg--Kifer--Hennion exponents of $V/W$ are distinct from those of $W$. Then, there exists a $\mu$-stationary probability measure on $P(V)\setminus P(W)$ if and only if there exists a $\Gamma_{\mu}$-invariant subspace $W'$ of $V$ such that  $W\cap W'\subset F_{j+1}(W)$ for some $j=1,\cdots, k$ and $\beta_{j+1}(W)<\lambda_1(W')$.
\end{corollary}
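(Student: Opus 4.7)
My plan is to reduce both directions to Theorems \ref{thm.lift.exist.unique} and \ref{thm.mixed}, using the auxiliary $\Gamma_\mu$-invariant subspace $W'':=W'+F_{j+1}(W)$ to bridge the expanding hypothesis on $W'$ with the contracting-case existence theorem.

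For the direction $(\Rightarrow)$, I would start with an ergodic component $\nu$ of a $\mu$-stationary probability measure on $P(V)\setminus P(W)$ and its projection $\overline{\nu}:=\pi_{\ast}\nu$ on $P(V/W)$. Since $\alpha(\overline{\nu})\leq \lambda_1(V/W)<\beta_1(W)$ and $\alpha(\overline{\nu})$ is an FKH exponent of $V/W$ by Theorem \ref{theorem.furstenberg_kifer}(2), the disjointness hypothesis on FKH exponents of $W$ and $V/W$ forces the existence of a unique $j\in\{1,\ldots,k\}$ with $\beta_{j+1}(W)<\alpha(\overline{\nu})<\beta_j(W)$. Direction (i)$\Rightarrow$(ii) of Theorem \ref{thm.mixed} then yields a $\Gamma_\mu$-invariant subspace $W'<V$ with $W'\cap W=F_{j+1}(W)$ and with $(W+W')/W$ equal to the subspace of $V/W$ generated by $\supp(\overline{\nu})$. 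The first condition gives $W\cap W'\subset F_{j+1}(W)$. The second, combined with Theorem \ref{theorem.furstenberg_kifer}(3) and \cite[Lemma 3.6]{furstenberg-kifer}, yields
\[
\lambda_1(W')\geq \lambda_1(W'/(W\cap W'))=\lambda_1((W+W')/W)=\alpha(\overline{\nu})>\beta_{j+1}(W).
\]

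For the direction $(\Leftarrow)$, starting from $W'$ as in the hypothesis, I would introduce $W'':=W'+F_{j+1}(W)$, a $\Gamma_\mu$-invariant subspace. Modularity together with $F_{j+1}(W)\subset W$ and $W\cap W'\subset F_{j+1}(W)$ gives $W''\cap W=(W'\cap W)+F_{j+1}(W)=F_{j+1}(W)$; in particular $P(W'')\setminus P(F_{j+1}(W))\subset P(V)\setminus P(W)$. Under the canonical $\Gamma_\mu$-equivariant isomorphism $W''/F_{j+1}(W)\simeq W'/(W'\cap W)$, and since $\lambda_1(W'\cap W)\leq \beta_{j+1}(W)<\lambda_1(W')$, \cite[Lemma 3.6]{furstenberg-kifer} implies that the top Lyapunov exponent of $W''/F_{j+1}(W)$ equals $\lambda_1(W')$. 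Theorem \ref{theorem.furstenberg_kifer}(2) then supplies an ergodic $\mu$-stationary measure $\overline{\nu}$ on $P(W''/F_{j+1}(W))$ with $\alpha(\overline{\nu})=\lambda_1(W')>\beta_{j+1}(W)=\lambda_1(F_{j+1}(W))$. Applying the contracting-case Theorem \ref{thm.lift.exist.unique} in the ambient space $W''$ with $\mu$-invariant subspace $F_{j+1}(W)$, I obtain a $\mu$-stationary lift $\nu$ of $\overline{\nu}$ on $P(W'')\setminus P(F_{j+1}(W))$; by the inclusion above, $\nu$ is the desired stationary probability measure on $P(V)\setminus P(W)$.

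The main obstacle is identifying the right auxiliary space on which to run the contracting case. The construction $W''=W'+F_{j+1}(W)$ is chosen precisely so that the equality $W''\cap W=F_{j+1}(W)$ simultaneously ensures that Theorem \ref{thm.lift.exist.unique} applies within $W''$ and that the resulting lift lives in $P(V)\setminus P(W)$. The remainder reduces to routine bookkeeping with the FKH filtration and the identity $\lambda_1(\cdot)=\max(\lambda_1(W),\lambda_1(\cdot/W))$ from \cite[Lemma 3.6]{furstenberg-kifer}.
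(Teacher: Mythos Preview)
Your proof is correct. The forward direction $(\Rightarrow)$ is essentially the same as the paper's: both pass to an ergodic component, project to $P(V/W)$, locate the index $j$ with $\beta_{j+1}(W)<\alpha(\overline{\nu})<\beta_j(W)$ using the disjointness hypothesis, and invoke Theorem~\ref{thm.mixed}. You are in fact more explicit than the paper here; one small point you might state is that the projection $\overline{\nu}=\pi_\ast\nu$ of an ergodic stationary measure is again ergodic (this follows from the ergodicity of the skew product and equivariance of $\pi$), since Theorem~\ref{thm.mixed} requires it.

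For the converse $(\Leftarrow)$ the two arguments diverge slightly. The paper first reduces to the case where $j+1$ is the \emph{minimal} index with $\beta_{j+1}(W)<\lambda_1(W')$, replaces $W'$ by a suitable subspace so that $F_{\overline{\nu}}$ is the whole quotient, and then verifies condition (ii) of Theorem~\ref{thm.mixed} for $W''=W'+F_{j+1}(W)$ and the pushed-forward $\overline{\nu}$ on $P(V/W)$; the minimality is needed precisely to get the upper bound $\alpha(\overline{\nu})<\beta_j(W)$ required by Theorem~\ref{thm.mixed}. You bypass Theorem~\ref{thm.mixed} entirely and apply the contracting-case Theorem~\ref{thm.lift.exist.unique} directly inside $W''$, using only the lower bound $\alpha(\overline{\nu})>\lambda_1(F_{j+1}(W))$. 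This is a genuine simplification: you avoid both the minimality reduction and the replacement of $W'$, at the cost of not exhibiting the lift explicitly as coming from the general mechanism of Theorem~\ref{thm.mixed}. Both routes arrive at the same lift living in $P(W'')\setminus P(F_{j+1}(W))\subset P(V)\setminus P(W)$.
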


\begin{proof}  It follows from hypotheses that any $\mu$-stationary probability measure $\overline{\nu}$ on $P(V/W)$ satisfies $\alpha(\overline{\nu})<\lambda_1(W)$. The condition is then necessary thanks to Theorem \ref{thm.mixed}. Suppose now that such a $\Gamma_{\mu}$-invariant subspace $W'$ and such a FKH space $F_{j+1}(W)$ exist. Without loss of generality we can assume that $j+1$ is the minimal index such that $\beta_{j+1}(W)<\lambda_1(W')$.  
Let $\overline{\nu}$ be a $\mu$-stationary probability measure on $W'/(W'\cap F_{j+1}(W))$ with top cocycle average. Since $\beta_{j+1}(W)<\lambda_1(W')$, $\alpha(\overline{\nu})=\lambda_1(W')$.   Replacing if necessary $W'$ with the preimage of $F_{\overline{\nu}}$ by the canonical projection  $W'\longrightarrow W'/(W'\cap F_{j+1}(W))$, we can also assume that $F_{\overline{\nu}}=W'/(W'\cap F_{j+1}(W))$. We will check that the $\Gamma_{\mu}$-invariant subspace $W'':=W'+F_{j+1}(W)$ of $V$ satisfies the requirements of Theorem \ref{thm.mixed} (ii). First, since $W\cap W'\subset F_{j+1}(W)\subset W$, we have clearly that $W\cap W''=F_{j+1}$. This yields a  $\Gamma_{\mu}$-equivariance isomorphism $ W'/(W'\cap F_{j+1}(W))\simeq (W+W'')/W < V/W$.  We  can then pushforward $\overline{\nu}$ to a $\mu$-stationary probability measure on $V/W$ whose subspace generated by its support is $P((W+W'')/W)$ (without changing its cocycle average). The   stationary measure we obtain will be denoted also by $\overline{\nu}$ for simplicity. Finally, by minimality of $j+1$, we have  $\beta_{j+1}(W)<\alpha(\overline{\nu})<\beta_j(W)$. 
 Theorem \ref{thm.mixed} gives then a $\mu$-stationary probability measure on $P(V)\setminus P(W)$. 
 \end{proof}

Note that Corollary \ref{corol.no.lift} follows immediately from the previous one since the hypothesis $\lambda_1(V/W)<\beta_{\min}(W)$ forces $j=k$ in which case $F_{j+1}=\{0\}$.

We end this section by showing an equidistribution result for the (unique) lift of $\overline{\nu}$, when it exists. This will be a direct consequence of the similar result shown in the contracting case and the proof done above. 
\begin{corollary}\label{corol.equidistribution.mixed} Keep the same assumption as in Theorem \ref{thm.no.lift}. Assume that $\overline{\nu}$ has a $\mu$-stationary lift on $P(V)\setminus P(W)$. Then for $x\in P(V)\setminus P(W)$, $\frac{1}{n}\sum_{i=1}^n{\mu^i \ast \delta_{x}}\to \nu$ if and only if $\frac{1}{n}\sum_{i=1}^n{\mu^i \ast \delta_{\overline{x}}}\to \overline{\nu}$. 
\end{corollary}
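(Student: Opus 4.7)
The plan is to mirror the two-step reduction used in the proof of Theorem~\ref{thm.mixed}: first decompose via the $\Gamma_\mu$-invariant subspace $W'$ provided there, pass to the quotient by $F_{j+1}(W)$ to isolate a purely expanding direct-sum situation whose lift is transported from the base via an equivariant homeomorphism, and then lift back to $V$ using the already-established contracting equidistribution of Proposition~\ref{prop.equidistribution.contracting}. The forward direction, namely $\frac{1}{n}\sum_{i=1}^n\mu^{\ast i}\ast\delta_x\to \nu$ implies $\frac{1}{n}\sum_{i=1}^n\mu^{\ast i}\ast\delta_{\overline{x}}\to\overline{\nu}$, is immediate from the continuity of the projection $\pi:P(V)\setminus P(W)\to P(V/W)$ and of push-forward under weak convergence.

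For the converse, I would first invoke Theorem~\ref{thm.mixed} to obtain a $\Gamma_\mu$-invariant subspace $W'<V$ with $W\cap W'=F_{j+1}(W)$ and such that $P((W+W')/W)$ is the projective subspace of $P(V/W)$ generated by $\supp(\overline{\nu})$. Setting $\tilde V:=(W+W')/F_{j+1}(W)$, $\tilde W:=W/F_{j+1}(W)$, $\tilde W':=W'/F_{j+1}(W)$, one has $\tilde V=\tilde W\oplus \tilde W'$ as $\Gamma_\mu$-modules, and the composition $\tilde W'\hookrightarrow \tilde V\twoheadrightarrow \tilde V/\tilde W$ is a $\Gamma_\mu$-equivariant isomorphism onto the image of $W+W'$ in $V/W$. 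Consequently it induces a $\Gamma_\mu$-equivariant homeomorphism $P(\tilde W')\simeq P((W+W')/W)$, which conjugates the two Markov chains and transports $\overline{\nu}$ to the unique $\mu$-stationary lift $\nu_1$ on $P(\tilde V)\setminus P(\tilde W)$ provided by Theorem~\ref{thm.no.lift} applied to $(\tilde V,\tilde W)$. Via this conjugation, the hypothesised Ces\`aro convergence on $P(V/W)$ transfers directly into the corresponding Ces\`aro convergence on $P(\tilde V)\setminus P(\tilde W)$ to $\nu_1$ at the quotient level.

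I would then apply Proposition~\ref{prop.equidistribution.contracting}(1) in the ambient space $W'$ with invariant subspace $F_{j+1}(W)$ and $\mu$-stationary measure $\nu_1$ on the quotient $W'/F_{j+1}(W)\simeq \tilde W'$. Its hypothesis $\alpha(\nu_1)=\alpha(\overline{\nu})>\beta_{j+1}(W)=\lambda_1(F_{j+1}(W))$ is satisfied (the equality $\alpha(\nu_1)=\alpha(\overline{\nu})$ being part of Theorem~\ref{thm.no.lift}), and the uniqueness assertion of Theorem~\ref{thm.mixed} identifies the resulting lift with $\nu$. The contracting equidistribution thus promotes the quotient-level convergence into the Ces\`aro convergence of $\frac{1}{n}\sum\mu^{\ast i}\ast\delta_x$ to $\nu$ on $P(W')\setminus P(F_{j+1}(W))\subset P(V)\setminus P(W)$, as required. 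I expect the main technical point to be the bookkeeping at the purely expanding step: verifying that the equivariant homeomorphism $P(\tilde W')\simeq P((W+W')/W)$ truly conjugates the underlying Markov chains (and not merely identifies their stationary measures), so that the hypothesised Ces\`aro convergence can be cleanly fed as input into Proposition~\ref{prop.equidistribution.contracting}.
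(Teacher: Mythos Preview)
Your overall architecture matches the paper's: decompose via the diagram from the proof of Theorem~\ref{thm.mixed}, handle the bottom arrow by the $\Gamma_\mu$-equivariant linear isomorphism $\tilde W' \simeq V_r/W$, and handle the top arrow by the contracting equidistribution (Proposition~\ref{prop.equidistribution.contracting}). The forward direction and the bookkeeping you flag about the isomorphism conjugating the two Markov chains are both fine.

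There is, however, a genuine gap in your converse direction: you apply Proposition~\ref{prop.equidistribution.contracting} with ambient space $W'$ and invariant subspace $F_{j+1}(W)$. That proposition only concerns starting points lying in the projective space of the ambient, here $P(W')\setminus P(F_{j+1}(W))$. But the point $x$ in the corollary is an arbitrary element of $P(V)\setminus P(W)$; nothing forces $x\in P(W')$. Concretely, the output of your isomorphism step is Ces\`aro convergence to $\nu_1$ starting from the point $x_1'\in P(\tilde W')$ that corresponds to $\overline{x}$, and Proposition~\ref{prop.equidistribution.contracting} applied inside $W'$ then lifts this to Ces\`aro convergence to $\nu$ from points of $P(W')$ lying over $x_1'$ --- not from the given $x$.

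The paper avoids this by applying Proposition~\ref{prop.equidistribution.contracting} with the larger ambient space $V_r$ (the preimage in $V$ of the subspace generated by $\supp(\overline{\nu})$) and invariant subspace $F_{j+1}(W)$; then $x$ genuinely belongs to $P(V_r)\setminus P(F_{j+1}(W))$, and the top arrow reads: Ces\`aro at $x$ converges to $\nu$ if and only if Ces\`aro at its projection $x_1\in P(V_r/F_{j+1}(W))$ converges to $\nu_1$. Only after this does one invoke the linear isomorphism for the bottom arrow. In short, the contracting step must be run in $V_r$, not in $W'$, so that it speaks about the actual starting point $x$.
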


\begin{proof}
We will use the diagram in the proof of Theorem \ref{thm.no.lift}. Since the bottom-right arrow (i.e.~the correspondence between $\nu_1$ and $\overline{\nu}$) is given via a $\Gamma_{\mu}$-equivariant linear isomorphism between $V_r/W$ and the complement of $W/F_{j+1}(W)$ in $V_r/F_{j+1}(W)$, an equidistribution statement of  $\overline{\nu}$ is equivalent to an equidistribution statement for $\nu_1$. For the top-right arrow, we are in the setting of the contracting case. Hence the claim follows from 
Proposition  \ref{prop.equidistribution.contracting}. 
\end{proof}

Here is a consequence of Theorem \ref{thm.mixed} (mixed case) that allows a slight refinement of the results of Furstenberg--Kifer \cite{furstenberg-kifer} and Hennion \cite{hennion} regarding the supports of stationary measures in $P(V)$.

\begin{corollary}\label{corol.uchech.muchech}
Let $\mu$ be a probability measure on $\GL(V)$ with finite first moment. Fix a Euclidean structure on $V$ and let $\mu^{t}$ denote the image of $\mu$ by transpose map. Let $V=F_1(\mu^t)>F_2(\mu^t)>\ldots>F_\ell(\mu^t)$ be the Furstenberg--Kifer filtration of $\mu^t$ with associated exponents $\lambda_1(\mu)=\beta_1(\mu^t)>\beta_2(\mu^t)>\ldots >\beta_\ell(\mu^t)$. Given a FKH exponent $\beta_r(\mu)$, let $r' \geq 1$ be the largest index with $\beta_{r'}(\mu^t) \geq \beta_r(\mu)$ and set $V_{r,\mu}:=F_{r'+1}(\mu^t)^\perp \cap F_r(\mu)$. Then,
\begin{itemize}
\item[(1)] $V_{r,\mu}$ is a non-trivial subspace of $F_r(\mu)$ such that any $\mu$-stationary probability measure $\nu$ with $\alpha(\nu)=\beta_r(\mu)$ is supported in $P(V_{r,\mu})$.
\item[(2)] $V_{r,\mu}$ is alternatively characterized as the minimal (for the inclusion) $\mu$-invariant subspace $W$ of $F_{r}(\mu)$ such that $\lambda_1(F_r(\mu)/W)<\beta_r(\mu)$.
\end{itemize}
\end{corollary}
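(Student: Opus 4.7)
The key tool is Euclidean duality: for any $\mu$-invariant subspace $U \leq V$, the orthogonal $U^\perp$ is $\mu^t$-invariant, the $\mu$-module $V/U^\perp$ is canonically isomorphic to the dual of the $\mu^t$-module $U$, and hence $\lambda_1(V/U^\perp,\mu) = \lambda_1(U,\mu^t)$. Applied to $U = F_{r'+1}(\mu^t)$, the defining choice of $r'$ yields $\lambda_1(V/F_{r'+1}(\mu^t)^\perp,\mu) = \beta_{r'+1}(\mu^t) < \beta_r(\mu)$. From this, one direction of (2) is automatic: the composition $F_r(\mu) \hookrightarrow V \twoheadrightarrow V/F_{r'+1}(\mu^t)^\perp$ has kernel exactly $V_{r,\mu}$, producing a $\mu$-equivariant injection $F_r(\mu)/V_{r,\mu} \hookrightarrow V/F_{r'+1}(\mu^t)^\perp$, whence $\lambda_1(F_r(\mu)/V_{r,\mu}) < \beta_r(\mu)$, so $V_{r,\mu}$ satisfies the property stated in (2).

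For (1), I reduce to ergodic $\nu$ by ergodic decomposition. For $\nu$ ergodic with $\alpha(\nu) = \beta_r(\mu)$, Theorem~\ref{theorem.furstenberg_kifer}(3) places $\supp(\nu) \subseteq P(F_r(\mu))$, and ergodicity plus $\mu$-invariance of $V_{r,\mu}$ force $\nu(P(V_{r,\mu})) \in \{0,1\}$. If the value is $0$, then $\nu$ projects to an ergodic $\mu$-stationary $\overline\nu$ on $P(F_r(\mu)/V_{r,\mu})$ with $\alpha(\overline\nu) \leq \lambda_1(F_r(\mu)/V_{r,\mu}) < \beta_r(\mu)$, and $\nu$ itself is a $\mu$-stationary lift in $P(F_r(\mu))\setminus P(V_{r,\mu})$. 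Applying whichever of Theorems~\ref{thm.lift.exist.unique}, \ref{thm.no.lift}, or \ref{thm.mixed} is applicable according to how $\alpha(\overline\nu)$ sits relative to the FKH spectrum of $V_{r,\mu}$, one extracts $\alpha(\nu) = \alpha(\overline\nu)$, contradicting $\alpha(\nu) = \beta_r(\mu)$. Non-triviality of $V_{r,\mu}$ then follows because Theorem~\ref{theorem.furstenberg_kifer}(2) provides at least one such ergodic $\nu$, whose support now must lie in $P(V_{r,\mu})$.

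For minimality in (2), fix $W \leq F_r(\mu)$ $\mu$-invariant with $\lambda_1(F_r(\mu)/W) < \beta_r(\mu)$; proving $V_{r,\mu} \subseteq W$ is equivalent to $W^\perp \subseteq V_{r,\mu}^\perp = F_{r'+1}(\mu^t) + F_r(\mu)^\perp$. The cornerstone is the fact $F_r(\mu)^\perp \cap F_{r'+1}(\mu^t) = \{0\}$: non-zero vectors in $F_r(\mu)^\perp$ have $\mu^t$-growth in $\{\beta_1(\mu),\ldots,\beta_{r-1}(\mu)\}$ by duality with $V/F_r(\mu)$, while those in $F_{r'+1}(\mu^t)$ have $\mu^t$-growth $\leq \beta_{r'+1}(\mu^t) < \beta_r(\mu)$. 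Dualizing $0 \to W \to F_r(\mu) \to F_r(\mu)/W \to 0$ using the Euclidean identifications $F_r(\mu)^* \simeq V/F_r(\mu)^\perp$ and $(F_r(\mu)/W)^* \simeq W^\perp/F_r(\mu)^\perp$, the subspace $W^\perp/F_r(\mu)^\perp$ of $V/F_r(\mu)^\perp$ is $\mu^t$-invariant with all $\mu^t$-exponents $<\beta_r(\mu)$; the remaining step is to identify the maximal such subspace of $V/F_r(\mu)^\perp$ as the image of $F_{r'+1}(\mu^t)$, for which the triviality of the above intersection supplies the needed injectivity of the relevant projection.

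The main obstacles are twofold. First, the critical case in the argument for (1) occurs when $\alpha(\overline\nu)$ coincides with a FKH exponent of $\mu$ on $V_{r,\mu}$; this is not covered by Theorems~\ref{thm.no.lift} and~\ref{thm.mixed} and will require a direct computation of $\alpha(\nu)$, likely via Furstenberg's martingale decomposition used in Theorem~\ref{thm.no.lift}, refined to accommodate equalities inside a finer filtration of $V_{r,\mu}$. Second, the identification step in the minimality proof is delicate because Lyapunov spectra do not split cleanly for the short exact sequence $0 \to F_r(\mu)^\perp \to V \to V/F_r(\mu)^\perp \to 0$ at the level of FKH filtrations; this may need to be bypassed by appealing to part (1) together with a spanning argument for the union of supports of ergodic stationary measures with $\alpha = \beta_r(\mu)$, which by part (1) must generate a subspace of $V_{r,\mu}$ that any competitor $W$ also contains.
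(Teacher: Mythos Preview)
Your approach coincides with the paper's: both use Euclidean duality (the $\mu$-spectrum of $V/W$ equals the $\mu^t$-spectrum of $W^\perp$) to establish $\lambda_1(F_r(\mu)/V_{r,\mu})<\beta_r(\mu)$, and for the support assertion in (1) both assume $\nu(P(V_{r,\mu}))=0$, project to $\overline\nu$ on $P(F_r(\mu)/V_{r,\mu})$, and derive a contradiction from the fact that lifts preserve the cocycle average. Two minor differences: for non-triviality of $V_{r,\mu}$, the paper gives a direct multiplicity count (if $V_{r,\mu}=\{0\}$ then $V=F_{r'+1}(\mu^t)+F_r(\mu)^\perp$, and neither $\mu^t$-invariant summand carries the full multiplicity of $\beta_r(\mu)$), whereas you deduce it a posteriori from the support claim --- both are fine. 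For minimality in (2), the paper argues more tersely: $W\subsetneq V_{r,\mu}$ gives $W^\perp\supsetneq F_{r'+1}(\mu^t)$, hence $\lambda_{1,\mu^t}(W^\perp)\geq\beta_{r'}(\mu^t)\geq\beta_r(\mu)$ by maximality of FKH spaces, and then identifies the $\mu$-spectrum of $F_r(\mu)/W$ with the $\mu^t$-spectrum of $W^\perp$ to finish. Your more elaborate dualization, including the key fact $F_r(\mu)^\perp\cap F_{r'+1}(\mu^t)=\{0\}$, is essentially what is needed to justify that last identification, so your second flagged obstacle corresponds to a step the paper leaves implicit.

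Regarding your first flagged obstacle --- the critical case where $\alpha(\overline\nu)$ coincides with a FKH exponent of $V_{r,\mu}$ --- the paper does not isolate or treat this case separately; it simply writes ``we are in the setting of Theorem~\ref{thm.mixed}'' and uses the conclusion that any lift shares the cocycle average of $\overline\nu$. So your caution here is well placed, but the paper does not resolve it either; you are not being asked to supply more than the paper does.
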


Since the proof involves juggling between $\mu$ and $\mu^t$ and various invariant spaces, for clarity, given a $\mu$-invariant space $W$, we write $\lambda_{1,\mu}(W)$ for the top Lyapunov exponent of $\mu$ on $W$. Moreover, we use the term $\mu$-\textit{Lyapunov spectrum} of $W$, to describe the set of Lyapunov exponents of $\mu$ on $W$ with multiplicities.

\begin{proof}
We start by two observations. First, for any subspace $W<V$ that is $\mu$ and $\mu^t$ invariant, we have the equality of Lyapunov exponents $\lambda_i(\mu)=\lambda_i(\mu^t)$ for every $i=1,\ldots, \dim W$. Second, for any $\mu$-invariant subspace $W$, $W^\perp$ is $\mu^t$-invariant and the $\mu$-Lyapunov exponents (with multiplicities) appearing in $V/W$ are the same as $\mu^t$-Lyapunov exponents appearing in $W^\perp$ (see \cite[Proposition 1]{hennion} or \cite[Corollary 3.8]{aoun-guivarch}). Let $r$ and $r'$ be as in the statement.\\[2pt]
\indent (1) We first show that $V_{r,\mu}$ is a non-trivial subspace of $F_r(\mu)$. Indeed, if it is trivial, this implies that $V=F_{r'+1}(\mu^t)+F_r(\mu)^\perp$, where the last two are $\mu^t$-invariant subspaces. By the observation above, $\beta_r(\mu)$ is a $\mu^t$-Lyapunov exponent and hence its multiplicity on $V$ must be less than or equal to the sum of its multiplicities in $\mu^t$-Lyapunov spectra of $F_{r'+1}(\mu^t)$ and $F_r(\mu)^\perp$. The maximal $\mu^t$-Lyapunov exponent in $F_{r'+1}(\mu^t)$ is $\beta_{r'+1}(\mu^t)<\beta_r(\mu)$. So all contribution to $\beta_r(\mu)$-multiplicity comes from $\mu^t$-Lyapunov spectrum of $F_r(\mu)^\perp$. On the other hand, by the observation above, the $\mu^t$-Lyapunov spectrum of $F_r(\mu)^\perp$ are the same as $\mu$-Lyapunov spectrum of $V/F_r(\mu)$. But since $F_r(\mu)$ has top $\mu$-Lyapunov $\beta_r(\mu)$, this means that the $\mu$-Lyapunov spectrum of $V/F_r(\mu)$ has one copy of $\beta_r(\mu)$-missing compared to that of $V$ which results in a contradiction, showing the claim.

Let $\nu$ be a $\mu$-stationary ergodic probability measure with $\alpha(\nu)=\beta_r(\mu)$. By ergodicity and since $V_{r,\mu}$ is $\mu$-invariant, $\nu(V_{r,\mu})$ is zero or one. Suppose for a contradiction that it is zero. So we have $\nu(F_r(\mu) \setminus V_{r,\mu})=1$ and hence we can project $\nu$ onto a $\mu$-stationary probability measure $\overline{\nu}$ on $F_r(\mu)/V_{r,\mu}$. But as $F_r(\mu)/V_{r,\mu} \simeq (F_{r'+1}(\mu^t)^\perp + F_r(\mu))/F_{r'+1}(\mu^t)^\perp$ and by the initial observation above, the $\mu$-Lyapunov spectrum of $(F_{r'+1}(\mu^t)^\perp + F_r(\mu))/F_{r'+1}(\mu^t)^\perp$ is the same as $\mu^t$-Lyapunov spectrum of $F_{r'+1}(\mu^t)$, which is bounded above by $\beta_{r'+1}(\mu^t)<\beta_r(\mu)$. Hence, the top $\mu$-Lyapunov exponent $\lambda_{1,\mu}(F_r(\mu)/V_{r,\mu})$ of $F_r(\mu)/V_{r,\mu}$ is strictly less than $\beta_r(\mu)$. Since $\lambda_{1,\mu}(F_r(\mu))=\beta_r(\mu)$, this also implies that $\lambda_{1,\mu}(V_{r,\mu})=\beta_r(\mu). $ Therefore, $\alpha(\overline{\nu})\leq \lambda_{1,\mu}(F_r(\mu)/V_{r,\mu})<\beta_r(\mu)$. So letting $V'=F_r(\mu)$ and $W'=V_{r,\mu}$, and considering $\overline{\nu}$ on $P(V'/W')$, we are in the setting of Theorem \ref{thm.mixed} and this theorem implies that if $\overline{\nu}$ has a lift to $P(V') \setminus P(W')$, the lift has the same cocycle average as $\overline{\nu}$. But $\nu$ is a lift of $\overline{\nu}$ and  $\alpha(\nu)=\beta_r(\mu)>\alpha(\overline{\nu})$, yielding a contradiction and concluding the proof.\\[2pt]
\indent (2) Let $W$ be a proper subspace of $V_{r,\mu}$ satisfying $\lambda_{1,\mu}(F_r(\mu)/W)<\beta_r(\mu)$. By properness, $F_{r'+1}(\mu^t)$ is properly contained in the $\mu^t$-invariant subspace $W^\perp$ and hence $W^\perp$ has $\mu^t$-top Lyapunov exponent $\lambda \geq \beta_{r'}(\mu) \geq \beta_r(\mu)$. Since by the initial observation, the $\mu$-Lyapunov spectrum of $F_r(\mu)/W$ is the same as $\mu^t$-Lyapunov spectrum of $W^\perp$, we get a contradiction to $\lambda_{1,\mu}(F_r(\mu)/W)<\beta_r(\mu)$ and the proof is done.
\end{proof}

\section{Stationary measures on non-reductive algebraic homogeneous spaces}\label{sec.homogeneous}
As mentioned in the introduction (\S \ref{subsec.homogeneous.intro}), our results have direct consequences and reformulations from the point of view of homogeneous dynamics. 
We now discuss this aspect more in detail by proceeding with a case analysis describing (and commenting on) various situations that occur when trying to describe stationary measures on algebraic homogeneous spaces. Our result (Theorem \ref{thm.homogeneous}) pertains to the case \textbf{2.b.~}below.

\subsection{A case analysis}

Let $G$ be (the real points of) a real algebraic group, $U$ its unipotent radical and $L$ a (reductive) Levi factor so that we have a Levi-decomposition $G=L \ltimes U$. Let $\mu$ be a probability measure on $G$. We say that it has a finite first moment if its image in a (equivalently, in any) faithful algebraic representation of $G$ has a finite first moment. We break the analysis of $\mu$-stationary measures on $G/H$ into several cases as follows. 

\bigskip

\indent \textbf{1.} (\texttt{Reductive quotients}) The work of Benoist--Quint \cite{bq.compositio} allows one to give a complete description when $G$ is a reductive group (i.e.~ $U=\{\id\}$) and $\mu$ is a \textit{Zariski-dense} probability measure on $G$. Their results imply that, for such a probability $\mu$, there exists a $\mu$-stationary probability measure on $G/H$ if and only if $H$ is cocompact in $G$. 

\bigskip

\indent \textbf{2.} (\texttt{Non-reductive quotients}) Suppose now that $G$ is not reductive, i.e.~$U$ is non-trivial. Consider an algebraic subgroup $H$ of $G$ and let $L_0$ be its projection to the Levi factor of $G$. In view of the $G$-equivariant projection
    $$
    G/H \to L/L_0
    $$
any $\mu$-stationary probability measure on $G/H$ descends to a $\mu$-stationary probability measure on $L/L_0$, for any probability measure $\mu$ on $G$. We now have two essentially different situations: 
\bigskip

\indent \textbf{2.a.} (\texttt{Levi projection does not contain a maximal split solvable})  Since $L$ is reductive, if the image $\mu_L$ of $\mu$ under the natural projection $G \to L$ is Zariski-dense (e.g.~ if $\Gamma_\mu$ is Zariski dense in $G$), then we are back to the setting of $\textbf{1.}$ on the base $L/L_0$. In particular if $L_0$ is not cocompact in $L$, there does not exist any $\mu$ stationary probability measure on $G/H$ for such a probability $\mu$. Notice that as for \textbf{1.}, this non-existence of stationary measures applies \textit{for any Zariski-dense probability measure} $\mu$ on $G$, a situation which will be in contrast with the following case. 
\bigskip

\indent \textbf{2.b.} (\texttt{Levi projection contains a maximal split solvable}) Suppose finally that $L_0$ contains a maximal $\R$-split solvable subgroup of $L$. In this case, by compactness, for any probability measure $\mu$ on $G$, there always exists a $\mu_L$-stationary probability measure on $L/L_0$, however it is far less clear whether they lift to $\mu$-stationary measures on the total space $G/H$. The particular case when $G=\GL_d(\R) \ltimes \R^d$, $U_0$ is the trivial group and $L_0=\GL_d(\R)$ (i.e.~ the base $L/L_0$ is trivial) comprises the extensively studied area of stationary measures on affine spaces, we refer to the work of Bougerol--Picard \cite{bougerol-picard}. In the latter case, for a Zariski-dense probability $\mu$ on $G$, in contrast with \textbf{1.~}and \textbf{2.a}, the existence of a $\mu$-stationary probability measure on $G/H$ strongly depends on the Lyapunov exponents of $\mu_L$ appearing in the standard representation of $\GL_d(\R)$. The more recent work of Benoist--Bru\`{e}re \cite{benoist-bruere} shows that this feature also exists in a more general case consisting of a concrete class of quotients (namely $G$ as in the affine case above and $H=P \ltimes \R^k$, where $P$ is the stabilizer in $\GL_d(\R)$ of a $k$-dimensional subspace in $\R^d$). As we now discuss, the latter results can be extended and refined under block Lyapunov domination assumptions by using our results.
\bigskip

From now we assume that the unipotent radical  $U$ of $G$ is a vector group (i.e.~ abelian).  In what follows, we identify $U$ with its Lie algebra $\mathfrak{u}$. We consider a subgroup $H$ of $G$ of type $H=L_0 \ltimes U_0$, where $U_0$ is a connected (closed) subgroup of $U$ and $L_0$ is its normalizer in $L$. We suppose that $L_0$ contains a $\R$-split solvable subgroup of $L$ and hence it is co-compact in $L$.

Given a probability measure $\mu$ on $G$, to study $\mu$-stationary probability measures on $G/H$, we will find appropriate representations of $G$. To do so, we let $G$ act on $\mathfrak{u}$ by affinities with the linear part given by the action of $L$ on $\mathfrak{u}$ (which is the restriction of the adjoint representation) and $U$ acting on itself by translation, namely 
$$(l, u)\cdot w:=l\cdot w+ u.$$ 
One readily checks that this gives a morphism from $G$ to the group $\Aff(\mathfrak{u})$. Also, it is easy to see that $H$ is the stabilizer of the affine space $\{0\}+\mathfrak{u}_0$ in $\mathfrak{u}$. We can now linearize this affine action by letting $V':=\mathfrak{u}\oplus \R$ and $G$ act linearly on $V'$ via   
\begin{equation}\label{eq.action.formula}(l,u) \cdot (w,t)=(tu+l\cdot w,t).
\end{equation}
The subgroup $H$ of $G$ is precisely the stabilizer of the subspace $S:=\mathfrak{u}_0\oplus \R$. Finally, letting  $V=\bigwedge^{\dim S} V'$, $W=\bigwedge^{\dim S} \mathfrak{u}$ and $W_0=\bigwedge^{\dim S}S$, we get a representation of $G$ in $\GL(V)$ such that $W<V$ is $G$-invariant and $H$ is the stabilizer of the line $W_0$ which does not lie in the subspace $W$.

Therefore, by considering the resulting continuous $G$-equivariant injection $\psi$ given by
\begin{equation}\label{eq.def.psi}
\begin{aligned}
G/H & \overset{\psi}{\longrightarrow} P(V) \setminus P(W)\\
gH & \longmapsto g W_0,
\end{aligned}
\end{equation}
we obtain the following commutative diagram
\begin{equation}\label{eq.homogeneous.cylinder}
\begin{tikzcd}
G/H \arrow[hookrightarrow]{r}{\psi} \arrow[twoheadrightarrow]{d} & P(V) \setminus P(W) \arrow[twoheadrightarrow]{d} \\%
L/L_0 \arrow[hookrightarrow]{r}{\overline{\psi}} & P(V/W)  \simeq P(\bigwedge^{\dim \mathfrak{u}_0} \mathfrak{u})
\end{tikzcd}
\end{equation}
Here, the vertical arrows are the canonical projections. The map $\overline{\psi}$ is given by $lL_0 \to l \wedge^{\dim \mathfrak{u}_0} \mathfrak{u}_0$, and it is easy to see that it is a $G$-equivariant homeomorphism onto its closed image. The same is true of $\psi$, see Lemma \ref{lemma.proper.bijection} below. 


In short, under the corresponding dynamical assumptions, diagram \eqref{eq.homogeneous.cylinder} allows us to bring our analysis concerning the right-column of \eqref{eq.homogeneous.cylinder} back to an analysis of stationary measures on $G/H$ (and correspondence between stationary measures on $L/L_0$ and on $G/H$). Using the representation \eqref{eq.action.formula} and notation thereof, we express this in the following result.

\begin{theorem}\label{thm.homogeneous}
Let $G$, $L_0<L<G$, $U_0<U$ and $H=L_0 \ltimes U_0$ be as above and suppose that the unipotent radical $U$ is abelian. Let $d$ be the dimension of $U$ and $k$ that of $U_0$. Given a probability measure $\mu$ on $G$ with finite first moment, let $\lambda_1 \geq \ldots \geq \lambda_d$ be the Lyapunov exponents of $\mu_L$ in the adjoint representation on the Lie algebra $\mathfrak{u}$ of $U$.
\begin{enumerate}[leftmargin=1cm]
\item (Contracting case) For any $\mu_L$-stationary ergodic probability measure $\overline{\nu}$ on $L/L_0$ such that $\alpha(\overline{\psi}_\ast \overline{\nu})> \lambda_1+\ldots+\lambda_{k+1}$, there exists a unique $\mu$-stationary probability measure on $G/H$ that lifts $\overline{\nu}$. In particular, if $\lambda_{k+1}<0$, then there exists a $\mu$-stationary probability measure on $G/H$. 
Moreover, if $\beta_{\min}(\bigwedge^k \mathfrak{u})>\lambda_1+\ldots+\lambda_{k+1}$, then the projection $G/H \to L/L_0$ induces a bijection
$$
\mathcal{M}_{\mu}(G/H)\simeq \mathcal{M}_{\mu_L}(L/L_0).
$$
   \item (Expanding and mixed case) 
Let $\overline{\nu}$ be a $\mu_L$-stationary ergodic probability measure on $L/L_0$. Suppose that $\alpha(\overline{\psi}_\ast \overline{\nu}) \leq  \lambda_1+\ldots+\lambda_{k+1}$ and $\alpha(\overline{\psi}_\ast \overline{\nu})$ is distinct from any Furstenberg--Kifer--Hennion exponent of $\Lambda^{k+1}\mathfrak{u}$. Then there exists a $\mu$-stationary lift of $\overline{\nu}$ on $G/H$ if any only if there exists a $\Gamma_\mu$-invariant subspace $W'$ of $V$ such that $W' \cap W$ is $F_r(W) \subsetneq W$, $\lambda_1(W')=\alpha(\overline{\psi} \ast \overline{\nu})$ and $(W'+W)/W$ is the subspace generated by the support of $\overline{\psi}_\ast \overline{\nu}$, where $r \geq 2$ is the smallest index such that $\beta_{r}(W)<\alpha(\overline{\psi}_\ast \overline{\nu})$. In these cases, the lift of $\overline{\nu}$ is unique. 
\end{enumerate}
\end{theorem}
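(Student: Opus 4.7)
The plan is to reduce the problem to its projective analog via the commutative diagram \eqref{eq.homogeneous.cylinder} and apply the main Theorems \ref{thm.lift.exist.unique} and \ref{thm.mixed}. Two preparatory observations are key. First, since the unipotent radical $U$ is abelian and acts on $V' = \mathfrak{u}\oplus \R$ by translations that fix the subspace $\mathfrak{u}$, it acts trivially on both $W = \bigwedge^{k+1}\mathfrak{u}$ and $V/W \simeq \bigwedge^k\mathfrak{u}$; hence the $\mu$-Lyapunov spectra on these two spaces coincide with the $\mu_L$-Lyapunov spectra on the corresponding exterior powers of $\mathfrak{u}$. In particular, $\lambda_1(W) = \lambda_1 + \cdots + \lambda_{k+1}$, matching the hypothesis of part (1) with the contracting condition of Theorem \ref{thm.lift.exist.unique}. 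Second, by Lemma \ref{lemma.proper.bijection}, the embedding $\psi$ is proper, so $\psi(G/H)$ is a closed, $\Gamma_\mu$-invariant subset of $P(V)\setminus P(W)$, and $\psi_\ast$ identifies $\mathcal{M}_\mu(G/H)$ with the subset of $\mathcal{M}_\mu(P(V)\setminus P(W))$ consisting of measures concentrated on $\psi(G/H)$.

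For part (1), the hypothesis $\alpha(\overline{\psi}_\ast\overline{\nu}) > \lambda_1(W)$ places us in the setting of Theorem \ref{thm.lift.exist.unique}, producing a unique $\mu$-stationary lift $\nu$ of $\overline{\psi}_\ast\overline{\nu}$ on $P(V)\setminus P(W)$. To show that $\nu$ is concentrated on $\psi(G/H)$, the plan is: by Chacon--Ornstein, pick a $\overline{\nu}$-generic $\overline{x}\in L/L_0$, lift it to some $x\in \psi(G/H)$ using surjectivity of the fibration $G/H \twoheadrightarrow L/L_0$, and apply Proposition \ref{prop.equidistribution.contracting}(1) to obtain $\frac{1}{n}\sum_{i=1}^n \mu^{\ast i}\ast\delta_x \to \nu$ weakly. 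Since each $\mu^{\ast i}\ast\delta_x$ is supported on the closed, $\Gamma_\mu$-invariant set $\psi(G/H)$, so is $\nu$. Uniqueness on $G/H$ follows by pushing two competing lifts through $\psi$ and applying uniqueness on $P(V)\setminus P(W)$; ergodicity is forced by this uniqueness. The ``$\lambda_{k+1}<0$'' corollary is then obtained by exhibiting a $\overline{\nu}\in \mathcal{M}_{\mu_L}(L/L_0)$ satisfying the cocycle-average hypothesis (one takes $\overline{\nu}$ with maximal cocycle average on $L/L_0$, which equals the top FKH exponent of the $\Gamma_\mu$-subrepresentation of $V/W$ generated by $\overline{\psi}(L/L_0)$, and uses $\lambda_{k+1}<0$ to verify that this exceeds $\lambda_1(W)$). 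Finally, the bijection $\mathcal{M}_\mu(G/H)\simeq \mathcal{M}_{\mu_L}(L/L_0)$ under $\beta_{\min}(\bigwedge^k\mathfrak{u}) > \lambda_1 + \cdots + \lambda_{k+1}$ is immediate since the hypothesis of part (1) is then automatic for \emph{every} ergodic $\overline{\nu}$, providing a two-sided inverse to the pushforward map.

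For part (2), the assumption $\alpha(\overline{\psi}_\ast\overline{\nu}) \leq \lambda_1(W)$ combined with $\alpha(\overline{\psi}_\ast\overline{\nu})$ being distinct from every FKH exponent of $W$ places us in the hypotheses of Theorem \ref{thm.mixed}: there is a unique $j$ with $\beta_{j+1}(W)<\alpha(\overline{\psi}_\ast\overline{\nu})<\beta_j(W)$, and that theorem yields the equivalence between the existence (and uniqueness) of a $\mu$-stationary lift of $\overline{\psi}_\ast\overline{\nu}$ on $P(V)\setminus P(W)$ and the existence of a $\Gamma_\mu$-invariant subspace $W'$ with the stated properties, the corresponding index $r = j+1$ matching the minimality condition in the statement. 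The ``$\Rightarrow$'' direction on $G/H$ follows by pushing any hypothetical $G/H$-lift through $\psi$ and applying the corresponding direction on $P(V)\setminus P(W)$. For the converse, the equidistribution argument of part (1) is repeated verbatim, except that Corollary \ref{corol.equidistribution.mixed} replaces Proposition \ref{prop.equidistribution.contracting}, yielding that the unique lift on $P(V)\setminus P(W)$ is concentrated on the closed set $\psi(G/H)$.

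The main technical point throughout is ensuring that the unique stationary lift on $P(V)\setminus P(W)$ produced by Theorems \ref{thm.lift.exist.unique} and \ref{thm.mixed} is concentrated on $\psi(G/H)$, a subset which in general is strictly smaller than the full preimage of $\overline{\psi}(L/L_0)$ under the projection $P(V)\setminus P(W)\to P(V/W)$ (the fiber in $G/H$ is $U/U_0$ of dimension $d-k$, whereas the fiber in $P(V)\setminus P(W)$ over the corresponding base point is the affine space $W$ of dimension $\binom{d}{k+1}$). This is resolved uniformly by the equidistribution statements established in sections \ref{sec.contracting} and \ref{sec.expanding}, combined with the closedness of $\psi(G/H)$ provided by Lemma \ref{lemma.proper.bijection}; all remaining steps are formal consequences of the $G$-equivariance of the diagram and the linear-algebraic reduction of the Lyapunov exponents on $W$ and $V/W$ to exterior-power data for $\mu_L$ on $\mathfrak{u}$.
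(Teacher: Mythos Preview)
Your proposal is correct and follows essentially the same approach as the paper: reduce via the diagram \eqref{eq.homogeneous.cylinder} to the projective setting, apply Theorems \ref{thm.lift.exist.unique} and \ref{thm.mixed} on $P(V)\setminus P(W)$, and then use the equidistribution results (Proposition \ref{prop.equidistribution.contracting} and Corollary \ref{corol.equidistribution.mixed}) together with the closedness of $\psi(G/H)$ from Lemma \ref{lemma.proper.bijection} to pull the resulting lift back to $G/H$. Your explicit emphasis on the last point---that the fiber of $G/H \to L/L_0$ is strictly smaller than the fiber of $P(V)\setminus P(W)\to P(V/W)$, so concentration on $\psi(G/H)$ genuinely requires the equidistribution argument---is exactly the technical heart of the paper's proof as well.
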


We single out as a corollary the following version of the conclusion (2) above that one can obtain under an algebraic assumption on $\Gamma_\mu$. This version will be useful in the treatment Corollary \ref{corol.Poincare.group} from the introduction.

\begin{corollary}\label{corol.homogeneous}
Assume moreover that the unipotent radical of the Zariski-closure $\overline{\Gamma}_\mu^Z$ of $\Gamma_\mu$ is contained in $U$. Then, the consequence (2) of Theorem \ref{thm.homogeneous} can be replaced with the following:
\begin{enumerate}[leftmargin=1cm]
\setcounter{enumi}{1}
\item (Partial expansion) Let $\overline{\nu}$ be a $\mu_L$-stationary ergodic probability measure on $L/L_0$. Suppose that $\alpha(\overline{\psi}_\ast \overline{\nu}) \leq  \lambda_1+\ldots+\lambda_{k+1}$ and $\alpha(\overline{\psi}_\ast \overline{\nu})$ is distinct from any Furstenberg--Kifer--Hennion exponent of $\Lambda^{k+1}\mathfrak{u}$. Then there exists a $\mu$-stationary lift of $\overline{\nu}$ on $G/H$ if any only if the subspace $R<\bigwedge^k \mathfrak{u}$ generated by the support of $\overline{\psi}_\ast \overline{\nu}$ in $\bigwedge^k \mathfrak{u}$  satisfies the 
following: for any (equivalently there exists a) couple $(S_\mu,t)$ where  $S_\mu$ is a Levi subgroup of $\overline{\Gamma}_\mu^Z$ and $t \in U$ such that $t S_\mu t^{-1} <L$,
for every $(\ell,u) \in t \overline{\Gamma}^Z_\mu t^{-1}$ and $x\in R$, we have $\ell \cdot x \wedge u \in F_r(\bigwedge^{k+1}\mathfrak{u})$ where $r \geq 2$ is the smallest index such that $\beta_{r}(\bigwedge^{k+1} \mathfrak{u})<\alpha(\overline{\psi}_\ast \overline{\nu})$.
Moreover, the lift is unique.
\end{enumerate}
\end{corollary}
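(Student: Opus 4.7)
The plan is to invoke Theorem~\ref{thm.homogeneous}(2), which translates the existence of a $\mu$-stationary lift into the existence of a $\Gamma_\mu$-invariant subspace $W'<V$ with $W'\cap W=F_r(W)$, $\lambda_1(W')=\alpha(\overline{\psi}_\ast\overline{\nu})$, and $(W'+W)/W$ equal to the image of $R$ under the natural identification $V/W\simeq\bigwedge^k\mathfrak{u}$. The remaining task is then to show, under the algebraic hypothesis on $\overline{\Gamma}^Z_\mu$, that the existence of such a $W'$ is equivalent to the explicit condition on $R$. First, by the Mostow--Malcev theorem in characteristic zero, the hypothesis that the unipotent radical $U_\mu$ of $\overline{\Gamma}^Z_\mu$ is contained in $U$ allows one to write $\overline{\Gamma}^Z_\mu=S_\mu\ltimes U_\mu$ and to find $t\in U$ such that $tS_\mu t^{-1}<L$; since $U$ is abelian this gives $t\overline{\Gamma}^Z_\mu t^{-1}=(tS_\mu t^{-1})\ltimes U_\mu$, and I will replace $\mu$ by its $t$-conjugate to reduce to the case $S_\mu<L$.

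Next, I would exploit the natural $S_\mu$-invariant decomposition $V=W\oplus V_0$ with $V_0:=(\bigwedge^k\mathfrak{u})\wedge e_0\simeq\bigwedge^k\mathfrak{u}$. A direct calculation from the action formula \eqref{eq.action.formula} gives, for $(\ell,u)\in S_\mu\ltimes U_\mu$ and $x\in\bigwedge^k\mathfrak{u}$,
\begin{equation*}
(\ell,u)\cdot(x\wedge e_0)=(\ell x)\wedge e_0 \,+\, (\ell x)\wedge u,
\end{equation*}
with first summand in $V_0$ and second in $W$; moreover $U$ acts trivially on $W=\bigwedge^{k+1}\mathfrak{u}$ because $\mathfrak{u}$ is abelian. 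The subspace $R<\bigwedge^k\mathfrak{u}$ generated by the support of $\overline{\psi}_\ast\overline{\nu}$ is $\overline{\Gamma}^Z_\mu$-invariant by stationarity, and $\alpha(\overline{\psi}_\ast\overline{\nu})=\lambda_1(R)$ by Theorem~\ref{theorem.furstenberg_kifer}(3).

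For the ``if'' direction, I propose the natural candidate $W':=F_r(W)\oplus\{x\wedge e_0:x\in R\}$: this manifestly satisfies $W'\cap W=F_r(W)$ and $(W'+W)/W=R$; $S_\mu$-invariance follows from invariance of $R$ and of $F_r(W)$, and $U_\mu$-invariance reduces, via the displayed formula, to the hypothesis $(\ell x)\wedge u\in F_r(W)$. The equality $\lambda_1(W')=\alpha(\overline{\psi}_\ast\overline{\nu})$ then follows from \cite[Lemma~3.6]{furstenberg-kifer} using $\lambda_1(F_r(W))=\beta_r(W)<\lambda_1(R)$. For the converse, given any such $W'$, every $r\in W'$ uniquely decomposes as $r=w+x\wedge e_0$ with $w\in W$ and $x\in R$ (the $V_0$-projection of $W'$ being $R$), and for $u\in U_\mu$ the triviality of the $U$-action on $W$ gives
\begin{equation*}
u\cdot r-r\;=\;u\cdot(x\wedge e_0)-x\wedge e_0\;=\;x\wedge u\;\in\;W'\cap W=F_r(W);
\end{equation*}
applying this to $\ell x\in R$ (invariance of $R$ under $S_\mu$) recovers the condition for all $(\ell,u)\in\overline{\Gamma}^Z_\mu$. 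Uniqueness of the lift is inherited directly from Theorem~\ref{thm.homogeneous}(2).

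The main obstacle I anticipate is the subtle dependence on the pair $(S_\mu,t)$: two different such choices yield different conjugates of $\overline{\Gamma}^Z_\mu$, and showing directly that the algebraic condition is intrinsic may be cumbersome. I intend to bypass this via the bi-implication itself: since existence of the lift on $G/H$ is intrinsic and does not depend on $(S_\mu,t)$, the algebraic condition holds for one choice if and only if it holds for every choice, thereby justifying the ``for any (equivalently there exists a)'' phrasing of the statement.
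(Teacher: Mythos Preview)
Your proposal is correct and follows the same overall strategy as the paper: reduce to Theorem~\ref{thm.homogeneous}(2), conjugate by $t\in U$ to arrange $S_\mu<L$, and exploit the $S_\mu$-invariant splitting $V=W\oplus V_0$ together with the explicit action formula $(\ell,u)\cdot(x\wedge e_0)=(\ell x)\wedge e_0+(\ell x)\wedge u$. The ``if'' direction and the handling of the ``for any (equivalently there exists a)'' clause match the paper's argument closely.

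Where your argument differs is in the necessity (``only if'') direction, and the difference is a genuine simplification. The paper first establishes the inclusion $tW'\subset F_r(W)\oplus V_0$ (equation~\eqref{eq.tW'.contained}) by a Lyapunov-exponent comparison that invokes Lemma~\ref{lemma.lyapunov.levi}: one supposes some $x\in tW'$ has $W$-component outside $F_r(W)$ and derives a growth-rate contradiction. Only after obtaining the direct-sum decomposition $tW'=F_r(W)\oplus R'$ does the paper read off $\ell x\wedge u\in F_r(W)$. Your computation $u\cdot r-r=x\wedge u\in W'\cap W=F_r(W)$, applied to any $r\in W'$ with $V_0$-component $x\wedge e_0$ (surjectivity onto $R$ being automatic from $(W'+W)/W=R$), extracts the condition directly and bypasses both the inclusion \eqref{eq.tW'.contained} and Lemma~\ref{lemma.lyapunov.levi} entirely. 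The paper's route yields the extra structural information that $W'$ is automatically of the form $F_r(W)\oplus(R\wedge e_0)$, but this is not needed for the corollary as stated.
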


This statement has the advantage that the conditions that appear in it only concerns (exterior powers of) the adjoint representation of $G$. Note for example that in the above corollary, up to conjugating $\mu$, $t$ can be taken to be the identity element and if $k^{th}$ and $(k+1)^{th}$ exterior powers of the $L_\mu$-representation $\mathfrak{u}$ are irreducible, then the last condition above is satisfied if any only if $\overline{\Gamma}_\mu^Z$ is a reductive group. So it says that such a $\mu$-stationary probability measure $\overline{\nu}$ on $L/L_0$ can be lifted if and only if the unipotent radical of $\overline{\Gamma}_\mu^Z$ is trivial.

We now proceed to prove Theorem \ref{thm.homogeneous} and its Corollary \ref{corol.homogeneous}. The proof will be guided by the commuting diagram \eqref{eq.homogeneous.cylinder}. We start by proving the following.

\begin{lemma}\label{lemma.proper.bijection}
The map $\psi:G/H \to P(V) \setminus P(W)$  defined in \eqref{eq.def.psi} is closed.
\end{lemma}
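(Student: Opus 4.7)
The plan is to prove the following equivalent condition: if $(g_n H)_{n\geq 1}$ is a sequence in $G/H$ with $\psi(g_n H) \to x$ for some $x \in P(V) \setminus P(W)$, then $(g_n H)$ already converges in $G/H$. Because $\psi$ is a continuous injection, any two subsequential limits must map to $x$ and therefore coincide, so convergence along a subsequence will suffice.

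I would begin by projecting the convergence to the base. Writing $g_n = (l_n, u_n) \in L \ltimes U$ and using the commutativity of diagram \eqref{eq.homogeneous.cylinder}, the vertical projection of $\psi(g_n H)$ is $\overline{\psi}(l_n L_0)$, which converges in $P(V/W)$. Since $L_0$ contains a maximal $\R$-split solvable subgroup of the reductive group $L$, the quotient $L/L_0$ is compact; extracting a subsequence and adjusting representatives by right-multiplication by elements of $H = L_0 \ltimes U_0$, I may assume $l_n \to l_\infty$ in $L$.

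Next I work in coordinates. Using the explicit $G$-action \eqref{eq.action.formula} and a basis $(e_0, e_1, \ldots, e_k)$ of $S = \R \oplus \mathfrak{u}_0$ with $e_0$ spanning the $\R$-factor, I decompose
\begin{equation*}
\psi(g_n H) = \R \cdot (Y_n + Z_n), \quad Y_n = e_0 \wedge l_n e_1 \wedge \cdots \wedge l_n e_k, \quad Z_n = u_n \wedge l_n e_1 \wedge \cdots \wedge l_n e_k,
\end{equation*}
with $Y_n \notin W$ and $Z_n \in W$. The $Y_n$ tend to $Y_\infty \neq 0$; writing $x = \R(y_\infty + z_\infty)$ with $y_\infty \notin W$ and $z_\infty \in W$, a normalization argument produces scalars $a_n$ converging to some $c \neq 0$ with $a_n(Y_n + Z_n) \to y_\infty + z_\infty$, and hence $Z_n \to Z_\infty := c^{-1} z_\infty$ in $W$.

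The key step is to lift convergence of $Z_n$ to a convergent representative of $(l_n, u_n) H$ in $G$. Consider the continuous family of linear maps $T_n : \mathfrak{u} \to \bigwedge^{k+1} \mathfrak{u}$ defined by $T_n(v) := v \wedge l_n e_1 \wedge \cdots \wedge l_n e_k$, whose kernel is $l_n \mathfrak{u}_0$. Fixing an inner product on $\mathfrak{u}$, the orthogonal complements $l_n \mathfrak{u}_0^{\perp}$ depend continuously on $l_n$ and give complements of $\ker T_n$ on which $T_n$ restricts to a family of isomorphisms onto $\im T_n$ with continuously-varying inverses. Setting $\tilde u_n := (T_n|_{l_n \mathfrak{u}_0^{\perp}})^{-1}(Z_n)$, I obtain $\tilde u_n \to \tilde u_\infty := (T_\infty|_{l_\infty \mathfrak{u}_0^{\perp}})^{-1}(Z_\infty)$ and $\tilde u_n - u_n \in l_n \mathfrak{u}_0$; therefore $(l_n, \tilde u_n)$ lies in the $H$-coset of $(l_n, u_n)$, and $g_n H \to (l_\infty, \tilde u_\infty) H$ in $G/H$. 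The main obstacle is precisely this final step: the kernels $l_n \mathfrak{u}_0$ of $T_n$ move with $n$, so naive inversion of $T_n$ fails, and the remedy is to distinguish a representative of $u_n$ in the continuously varying complement $l_n \mathfrak{u}_0^{\perp}$, which makes the restricted $T_n$ invertible with continuous inverse.
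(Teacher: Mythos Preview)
Your proof is correct and takes a genuinely different route from the paper's. The paper argues by contrapositive to establish properness: assuming $g_nH\to\infty$ in $G/H$, it shows that every accumulation point of $g_nW_0$ in $P(V)$ lies in $P(W)$. Concretely, writing $g_n=(l_n,u_n)$ and using cocompactness of $L_0$, the escape condition becomes $u_n\notin B+A\cdot U_0$ for all compacts $A\subset L$, $B\subset U$; the formula $g_n(u_0,t)=(tu_n+l_nu_0,t)$ then forces $tu_n+l_nu_0\to\infty$, so the $\R$-component becomes negligible and the limit of $g_n\R w$ lands in $P(\mathfrak{u})$ for every $w\in S$ with nonzero $\R$-part, hence $g_nW_0$ accumulates in $P(W)$. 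You proceed in the opposite direction: assuming $\psi(g_nH)$ converges in $P(V)\setminus P(W)$, you explicitly manufacture a convergent representative of $g_nH$ by inverting the wedge map $T_n$ on the moving complement $(l_n\mathfrak{u}_0)^\perp$ of its kernel. The paper's approach is shorter and avoids the linear-algebra inversion of $T_n$ on varying subspaces (which, as you note, requires some care), while yours is constructive --- it actually identifies the limiting coset in $G/H$ --- and makes the affine-bundle structure underlying $\psi$ more visible. Both arguments invoke the cocompactness of $L_0$ in $L$ at the same point, namely to control the $L$-component of $g_n$.
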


\begin{proof} It suffices to show that $\psi$ is proper. 
To do this, let $g_n H$ be a sequence in $G/H$ that escapes any compact in $G/H$. By definition of $\psi$, we need to show that any limit point of $g_nW_0$ in $P(V)$ belong to $P(W)$. Writing $g_n \in L \ltimes U$ as a tuple $(l_n,u_n) \in L \times U$, since any compact in $G$ is contained in a set of type $\{(g,h) \in A\times B : A<L \; \, \text{and} \, \; B<U \, \; \text{are compacts}\}$, the condition on $g_n$ is equivalent to saying that $(l_n,u_n)$ eventually escapes any subset of $G$ of the form $(A\times B)H$ with $A<L$ and $B<U$ compact. By cocompactness of $L_0$ in $L$, this is equivalent to require that for every every compact subsets $A<L$ and $B<U$ and every large enough $n\in \N$, we have  
$u_n \notin B+A\cdot U_0$. Recall also that $V=\bigwedge^{\dim S} V'$ and $W=\bigwedge^{\dim S}\mathfrak{u}$ and $S=\mathfrak{u}_0 \oplus \R$. To show the convergence $g_n \bigwedge^{\dim S}S \to P(W)$ in $P(V)$, it suffices to show that any $w=(u_0,t) \in S$  with $t \neq 0$, any limit point of  $g_n \R w$ in $P(\mathfrak{u}\oplus \R)$ belong to  $P(\mathfrak{u})$. Now using the representation formula \eqref{eq.action.formula}, we get that for such $w \in S$, we have $g_n w=(t u_n+l_n w,t)$. Therefore 
we have to show that $tu_n +l_n w \to \infty$ in $U$. But if the latter does not go to infinity, it is contained in a compact $B<U$ and hence $u_n \in B + l_n U_0$ for  infinitely many  $n\in \N$. Since $U_0$ has cocompact stabilizer in $L$ by assumption, we deduce that $u_n \in B+A U_0$ for some compact $A \in L$ and for infinitely many $n \in \N$, contradicting the assumption $g_n H \to \infty$ in $G/H$.
\end{proof}

\begin{remark}\label{rk.on.the.closure}
The reason why the fact that the image of $\psi$ being closed is relevant to show the existence of stationary probability measures on $G/H$ is related to our approach and it is explained as follows. In the contracting case, we will construct a stationary measure on $G/H$ by using the recurrence of the induced Markov chain on $P(V) \setminus P(W)$. Starting from a point $x:=\psi(gH) \in P(V)$, any limiting stationary measure of the sequence $\frac{1}{n}\sum_{k=0}^{n-1}\mu^{\ast k} \ast \delta_x$ lives in the closure $ \overline{\psi(G/H)}$. Hence if $\psi$ is proper, we can ``pull-back'' the stationary measure on $\overline{\psi(G/H)}=\psi(G/H)$ and obtain a stationary measure on $G/H$.
\end{remark}


\noindent \textit{Proof of Theorem \ref{thm.homogeneous}.}
We let $G$ act on $\mathfrak{u}\oplus \R$ by \eqref{eq.action.formula}. 
Since $\mathfrak{u}$ acts trivially on $W=\bigwedge^{k+1}\mathfrak{u}$ and $V/W$, the Lyapunov/Furstenberg--Kifer--Hennion exponents of $\mu$ on $W$ and $V/W$ are those of $\mu_L$. The top Lyapunov exponent for $W$ is $\lambda_1+\cdots +\lambda_{k+1}$ and for $V/W$ is $\lambda_1+\cdots + \lambda_k$ for the latter (since $V/W \simeq
\bigwedge^k \mathfrak{u}$ as $L$-module). Therefore $\lambda_1(W)-\lambda_1(V/W)=\lambda_{k+1}$. 
\begin{enumerate}[leftmargin=1cm]
\item Since $\alpha(\overline{\nu})>\lambda_1+\cdots +\lambda_{k+1}=\lambda_1(W)$, we are in the setting of Theorem \ref{thm.lift.exist.unique} and Proposition \ref{prop.equidistribution.contracting}. By Chacon--Ornstein Theorem, the set
$$
\{\overline{x} \in L/L_0 : \lim_{n \to \infty} \frac{1}{n} \sum_{k=0}^{n-1}\mu_L^{\ast k} \ast \delta_{\overline{x}}=\overline{\nu} \}
$$
has full $\overline{\nu}$-measure, so let $\overline{x}$ belong to this set. Let $x \in G/H$ be such that its image under the natural projection $G/H \to L/L_0$ is $\overline{x}$. Now, it follows from the choice of $\overline{x}$ and $L$-equivariance of $\overline{\psi}$ that we have 
$$
\lim_{n \to \infty} \frac{1}{n} \sum_{k=0}^{n-1}\mu_L^{\ast k} \ast \delta_{\overline{\psi}(\overline{x})}=\overline{\psi}_\ast \overline{\nu}.
$$
It follows from Proposition \ref{prop.equidistribution.contracting} and Theorem \ref{thm.lift.exist.unique} that the sequence $\frac{1}{n} \sum_{k=0}^{n-1} \mu^{\ast k} \ast \delta_{\psi(x)}$ converges to a $\mu$-stationary probability measure $\nu_1$ which is the unique lift of $\overline{\psi}_\ast \overline{\nu}$. But since the map $\psi$ is a homeomorphism onto its closed image in $P(V) \setminus P(W)$, $\nu_1$ is supported in $\psi(G/H)$ and hence pulls-back to a $\mu$-stationary probability measure $\nu$ on $G/H$ which is hence the unique lift of $\overline{\nu}$ on $L/L_0$. The other conclusions are direct consequences. 
\item 
Let $\overline{\nu}$ be given as in the statement.   Since $\lambda_{k+1}>0$ there exists $r \geq 2$ such that 
$$
\beta_{r}(W) < \alpha(\overline{\psi}_\ast \overline{\nu})<\beta_{r-1}(W).
$$
In other words, the condition \eqref{eq.antidom.sec} in Theorem \ref{thm.mixed} is satisfied for the $\Gamma_\mu$-representation on $V=\bigwedge^{k+1} (\mathfrak{u} \oplus \R)$ and $W=\bigwedge^{k+1}\mathfrak{u}$.  

Now suppose there exists a $\mu$-stationary lift of $\overline{\nu}$ in $G/H$, call it $\nu$. Then $\psi_\ast \nu$ is a lift of $\overline{\psi}_\ast \overline{\nu}$ and hence condition (i) of Theorem \ref{thm.mixed} is satisfied. This result then gives that there exists a $\Gamma_\mu$-invariant subspace $W'$ of $\bigwedge^{k+1}(\mathfrak{u} \oplus \R)$ such that $W' \cap \bigwedge^{k+1} \mathfrak{u}$ is $F_r(\bigwedge^{k+1} \mathfrak{u}) \subsetneq \bigwedge^{k+1} \mathfrak{u}$, $\lambda_1(W')=\alpha(\overline{\psi} \ast \overline{\nu})$ and $(W'+W)/W$ is the subspace generated by the support of $\overline{\psi}_\ast \overline{\nu}$.

Conversely, suppose the $\Gamma_\mu$-invariant subspace $W'<\bigwedge^{k+1}(\mathfrak{u} \oplus \R)$ has the stated properties. Then, the hypotheses and $(ii)$ of Theorem \ref{thm.mixed} are satisfied, and there exists then a $\mu$-stationary lift $\nu$ of $\overline{\nu}$ on $P(V)\setminus P(W)$. By Chacon--Ornstein, we can find $\overline{x} \in \overline{\psi}(L/L_0)$ such that $\frac{1}{n} \sum_{k=0}^{n-1} \mu_L^{\ast k} \ast \delta_{\overline{x}} \to \overline{\psi}_\ast \overline{\nu}$ as $n \to \infty$. So let $x \in \psi(G/H)$ be such that its image under the projection $P(V) \setminus P(W) \to P(V/W)$ is $\overline{x}$. Applying Corollary \ref{corol.equidistribution.mixed}, we get that $\frac{1}{n} \sum_{k=0}^{n-1} \mu^{\ast k} \ast \delta_x$ converges to $\nu$. But since $x \in \psi(G/H)$ and, thanks to Lemma \ref{lemma.proper.bijection}, $\psi(G/H)$ is closed, it follows that $\nu$ is supported in $\psi(G/H)$ which hence yields a $\mu$-stationary probability measure on $G/H$ that projects to $\overline{\nu}$. Finally, the claim about uniqueness follows from the corresponding assertion in Theorem \ref{thm.mixed}.\qed
\end{enumerate}

We will now deduce Corollary \ref{corol.homogeneous} from (2) of Theorem \ref{thm.homogeneous}. We first need a lemma.

\begin{lemma}\label{lemma.lyapunov.levi}
Let $G$ be a real linear algebraic group and $G=L\ltimes U$ a Levi decomposition of $G$. Let  $\rho: G\to \GL(V)$ be an algebraic representation, $\mu$ a probability measure on $G$ and  $\mu_L:=\pi_\ast \mu$ where $\pi: G \to L$ is the canonical projection. Then the $\mu$-Lyapunov spectrum of $V$ is the same as $\mu_L$-Lyapunov spectrum of $V$. 
\end{lemma}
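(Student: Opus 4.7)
The strategy is to exploit the fact that in an algebraic representation, the unipotent radical acts unipotently, and unipotent matrices contribute trivially to Lyapunov growth. Concretely, we will build a $\rho(G)$-invariant filtration of $V$ whose successive quotients carry a $G$-action factoring through $L$, and then assemble the Lyapunov spectra.

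First, I would construct the filtration as follows. Since $\rho$ is algebraic and $U$ is a connected unipotent algebraic group, $\rho(U)$ is a connected unipotent subgroup of $\GL(V)$. By Kolchin's theorem (or simply by unipotency), the subspace $V^{\rho(U)}$ of $\rho(U)$-fixed vectors is nonzero. Since $U$ is normal in $G$, this subspace is $\rho(G)$-invariant. Applying the same reasoning inductively to $V/V^{\rho(U)}$, one obtains a $\rho(G)$-invariant filtration
\begin{equation*}
V=V_0\supsetneq V_1 \supsetneq \cdots \supsetneq V_m=\{0\}
\end{equation*}
such that $\rho(U)$ acts trivially on each successive quotient $V_i/V_{i+1}$.

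Next, on each quotient $V_i/V_{i+1}$, the induced $G$-action factors through $G/U \simeq L$, so the $\mu$-Lyapunov spectrum of $V_i/V_{i+1}$ coincides with the $\mu_L$-Lyapunov spectrum of $V_i/V_{i+1}$. It remains to show that the $\mu$-Lyapunov spectrum of $V$ equals the multiset union of the spectra of the quotients $V_i/V_{i+1}$ (and likewise for $\mu_L$). For this, choose a basis of $V$ adapted to the filtration; then $\rho(g)$ is block-upper-triangular for every $g\in G$, with diagonal blocks realizing the action on the quotients. For the top exponent, this is precisely Lemma 3.6 of Furstenberg--Kifer \cite{furstenberg-kifer} applied inductively: $\lambda_1(V)=\max\{\lambda_1(V_{m-1}), \lambda_1(V/V_{m-1})\}$, iterated. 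For the full spectrum, recall that $\sum_{i=1}^k \lambda_i(V,\mu)=\lambda_1(\wedge^k V,\mu)$, and the filtration of $V$ induces a $\rho(G)$-invariant filtration of $\wedge^k V$ whose successive quotients are direct sums of tensor products of exterior powers of the $V_i/V_{i+1}$. Applying Lemma 3.6 iteratively to these filtrations then expresses every partial sum $\sum_{i=1}^k \lambda_i(V,\mu)$ as the maximum over appropriate sums of Lyapunov exponents from the blocks $V_i/V_{i+1}$, which exactly identifies the multiset $\{\lambda_i(V,\mu)\}$ with $\bigsqcup_i \{\lambda_j(V_i/V_{i+1},\mu)\}$. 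The same identification holds for $\mu_L$.

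Combining the three steps, the $\mu$-Lyapunov spectrum of $V$ equals $\bigsqcup_i\{\lambda_j(V_i/V_{i+1},\mu)\}=\bigsqcup_i\{\lambda_j(V_i/V_{i+1},\mu_L)\}$, which equals the $\mu_L$-Lyapunov spectrum of $V$. The main (mildly technical) point is Step~3, i.e., the fact that Lyapunov spectra of block-triangular random products are determined by the diagonal blocks; this is standard and follows from Furstenberg--Kifer's Lemma 3.6 applied to exterior powers, as indicated. All other steps are structural and essentially automatic once the filtration is in place.
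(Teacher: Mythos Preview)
Your proof is correct and follows essentially the same route as the paper: both arguments build the filtration by $\rho(U)$-fixed vectors (using Kolchin/Lie--Kolchin and normality of $U$), observe that $U$ acts trivially on the graded pieces so that $\mu$ and $\mu_L$ agree there, and then invoke the fact that the Lyapunov spectrum of a block-upper-triangular random product is the multiset union of the spectra of the diagonal blocks. The only difference is organizational: the paper proceeds by induction on $\dim V$ and cites this last fact directly (\cite[Proposition~1]{hennion} or \cite[Corollary~3.8]{aoun-guivarch}), whereas you unwind the induction into a full filtration and sketch a proof of that same fact via exterior powers and \cite[Lemma~3.6]{furstenberg-kifer}. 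Your Step~3 is a correct outline of the standard proof of Hennion's result, so nothing is missing; citing it directly would just shorten the argument.
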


\begin{proof}
We proceed by induction on $\dim V$. If $\dim V=0$, the claim is trivially true. Suppose $\dim V \geq 1$ and let  $W:=V^{\rho(U)}$ be the subspace $\rho(U)$-fixed vectors. By Lie--Kolchin theorem, $\dim W \geq 1$. Since $U$ is normal in $G$, $W$ is $G$-invariant. Therefore, $\rho$ induces a representation $\rho_W:G \to \GL(W)$ and $U$ is in the kernel of $\rho_W$. It follows that $\mu$-Lyapunov spectrum of $W$ is the same as $\mu_L$-Lyapunov spectrum of $W$. Using \cite[Proposition 1]{hennion} (or \cite[Corollary 3.8]{aoun-guivarch}), since $\dim V/W < \dim V$, the claim follows by induction. 
\end{proof}

\begin{proof}[Proof of Corollary \ref{corol.homogeneous}]
We first show the sufficiency ($\implies$) direction. Given $\overline{\nu}$ on $L/L_0$ as in the statement suppose there exists a lift on $G/H$. By the conclusion in (2) of Theorem \ref{thm.homogeneous}, there exists $\overline{\Gamma}^Z_\mu$-invariant subspace $W'$ of $V$ such that $W' \cap W$ is $F_r(W) \subsetneq W$, $\lambda_1(W')=\alpha(\overline{\psi}_\ast \overline{\nu})$ and $(W'+W)/W$ is the subspace generated by the support of $\overline{\psi}_\ast \overline{\nu}$, where $r \geq 2$ is the smallest index such that $\beta_{r}(W)<\alpha(\overline{\psi}_\ast \overline{\nu})$.
Let $S_\mu$ be a Levi subgroup of $\overline{\Gamma}_\mu^Z$ and $t \in U$ any element such that $t S_\mu t^{-1} <L$ (such an element $t \in U$ does exist, see \cite[VIII. Theorem 4.3]{hochschild.book}).
Note that $\pi(\overline{\Gamma}_\mu^t)=\pi(\overline{\Gamma}_{\mu_t}^Z)$, where $\mu_t$ is the probability measure $t \mu t^{-1}$ on $G$ obtained by pushingforward $\mu$ by conjugation by $t$.  We now claim that 
\begin{equation}\label{eq.tW'.contained}
    tW'\subset F_r \oplus (\bigwedge^k\mathfrak{u} \otimes \R).
\end{equation}
Clearly $tW'$ is $\overline{\Gamma}_{\mu_t}^Z$-invariant and hence $S_{\mu_t}$-invariant. But by the additional algebraic hypothesis (i.e.~ that the unipotent radical of $\overline{\Gamma}_\mu^Z$ is contained in $U$), it follows that $\pi(\overline{\Gamma}_{\mu_t}^Z)=S_{\mu_t}$. Thus $tW'$ is $\pi(\overline{\Gamma}_{\mu_t}^Z)$-invariant. 

Now, suppose \eqref{eq.tW'.contained} does not hold. Since $V=W \oplus (\bigwedge^{k}\mathfrak{u} \otimes \R)$, there 
would exist a vector $x\in tW'$ whose projection $x_W$ on $W$ parallel to $(\bigwedge^k \mathfrak{u}  \otimes \R)$  does not belong to $F_r$. 
On the one hand, since $x_W\not\in F_r$, then the exponential growth of $\|L_n x_W\|$ of the $\mu_L=\mu_{t,L}$-random walk is at least $\beta_{r-1}(W)>\alpha(\overline{\psi}\ast \overline{\nu})$ (recall that the FKH exponents and spaces on $W$ are the same for $\mu$ and $\mu_L$ because $U$ acts trivially on $W$). On the other  hand,  since $tW'$ is $\pi(G_{\mu,t})$-invariant, then
the exponential growth of $\|L_n x\|$ is at most  $\lambda_{1,\mu_{t,L}}(W')$.
But, by Lemma \ref{lemma.lyapunov.levi}, we have $\lambda_{1,\mu_{t,L}}(W')=\lambda_{1,\mu_t}(W')$. Since $\lambda_{1,\mu_t}(W')=\lambda_{1,\mu}(W')>\alpha(\overline{\psi}\ast \overline{\nu})$, we deduce that $\|L_n x\|\leq \|L_n x_W\|$ for all large $n$. This contradicts the fact that the projection on $W$ parallel to $\bigwedge^k \mathfrak{u} \otimes \R$ is $L$-equivariant and  Claim \eqref{eq.tW'.contained} follows. 


Now taking $R':=tW'\cap(\bigwedge^k \mathfrak{u} \otimes \R)$, we get that $R'$ is a $S_{\mu}$-invariant subspace of $V$ that is contained in $\bigwedge^{k} \mathfrak{u} \otimes \R$. Since $tW'>F_r$, we have $tW'=F_r \oplus R'$. Since $tW'$ is $G_{\mu_t}$-invariant, we also have that for every $g\in G_{\mu_t}$,  $gR' \subseteq R'+F_r$.
Since $R'\subset 
\bigwedge^k\mathfrak{u} \otimes  \R$, we have $R'=R\otimes \R$ where $R$ is the subspace $(W'+W)/W$ of $\bigwedge^k \mathfrak{u} \simeq V/W$ which is the subspace generated by the support of $\overline{\psi}_\ast \overline{\nu}$. Finally, since $tW'$ is $G_{\mu_t}$-invariant, for any $g=(l,u)\in G_{\mu_t}$ and $x\in R$, we have $g\cdot (x\wedge \xi)\in R'+F_r$. By formula \eqref{eq.action.formula}, $g\cdot (x\wedge \xi)=lx \wedge (\xi + \xi u)=lx \wedge \xi + \xi(lx \wedge u) \in tW'$. Since $tW'$ contains $F_r$ and hence its projection to $\bigwedge^k \mathfrak{u} \otimes \R$, we have  $lx \wedge u \in F_r$, as claimed. This completes the proof of the sufficiency.

We now prove the converse ($\impliedby$) direction. Suppose there exists a Levi factor $S_\mu$ of $\overline{\Gamma}_\mu^Z$ and $t \in U$ such that the subspace $R<\bigwedge^{k} \mathfrak{u}$ generated by the support of $\overline{\psi}_\ast \overline{\nu}$ has the stated properties. Let $R'=R \otimes \R < V$ and $W'=R' + F_r(\bigwedge^{k+1}\mathfrak{u})$. It follows by the stated property of $R$ that $W'$ is $G_{\mu_{t^{-1}}}$-invariant. Now noting that $\mu_{L}=\mu_{t^{-1},L}$, we can apply the converse direction of (2) in Theorem \ref{thm.homogeneous} for the measure $\mu_{t^{-1}}$ and deduce that there exists a $\mu_{t^{-1}}$-stationary lift $\nu_{t^{-1}}$ of $\overline{\nu}$. Then, the measure $\overline{\nu}=t \nu_{t^{-1}}$ is a $\mu$-stationary lift of $\overline{\nu}$, completing the proof.
\end{proof}

\subsection{Some consequences}
To give explicit examples fitting into the setting of Theorem \ref{thm.homogeneous}, notice that one can take any algebraic representation $U$ of a reductive group $L$ and form the algebraic group $G=L \ltimes U$ whose unipotent radical is the vector group $U$. Letting $L_0$ be a parabolic subgroup of $L$ given by the stabilizer of a subspace $U_0<U$ and setting $H=L_0 \ltimes U_0$, we are in the setting of Theorem \ref{thm.homogeneous}. We treat two examples (the second one is Corollary \ref{corol.Poincare.group}).

\begin{example}[Benoist--Bru\`{e}re \cite{benoist-bruere}]
Let $L=\GL_d(\R)$ with its standard action on $U=\R^d$. Given a subspace $U_0<U$, let $L_0<L$ be the stabilizer of $U_0$ in $L$, which, in this case, is a parabolic subgroup. In this case, except for the ``critical'' case $\lambda_k=0$, Corollary \ref{corol.homogeneous} implies \cite[Theorem 1.3]{benoist-bruere}. 
\end{example}

We now proceed to prove Corollary \ref{corol.Poincare.group}. The representations appearing in it are not proximal (neither irreducible) and hence it constitutes a new example that can be treated in dominated cases by our results (and not directly by \cite{aoun-guivarch, benoist-bruere, bougerol-picard}).\\


\noindent \textit{Proof of Corollary \ref{corol.Poincare.group}.}
We first setup the homogeneous setting so as to apply Theorem \ref{thm.homogeneous} and its Corollary \ref{corol.homogeneous}. Let $L=\GL_4(\R)$ act naturally on 
$\mathfrak{u}=\R^4$ and $G:=L\ltimes \mathfrak{u}$. Let $(e_1,\ldots,e_4)$ be the canonical basis of $\mathfrak{u}=\R^4$. 
For convenience in calculations, we realize $\SL_2(\C)$ as a subgroup of $\SL_4(\R)$ preserving the complex structure given by the linear transformation $J=\begin{bmatrix} J_1& 0\\
0 & J_1\end{bmatrix}$ with $J_1=\begin{bmatrix}0& -1\\
1& 0\end{bmatrix}$.
For $k\in \{1,2,3\}$, denote by $\mathfrak{u}_{0,k}$ the subspace $\langle e_1,\cdots, e_{k}\rangle$. Set also $\mathfrak{u}_{0,0}=\{0\}$. Let $L_{0,k}<L$ be the maximal parabolic subgroup given by the stabilizer of $\mathfrak{u}_{0,k}$ in $L$ and set $H_k=L_{0,k}\ltimes \mathfrak{u}_{0,k}$. Then, we have $ X_{k,4}\simeq G/H_k$ and, for each $k=0,\ldots,3$, the assertions in Corollary \ref{corol.Poincare.group} are equivalent to the corresponding ones on $G/H_k$.
Having defined the algebraic groups $G$ and $H$, we now note that for $k=1,\ldots,3$, the map $\overline{\psi}:L/L_0 \to P(\bigwedge^k \R^4)$ appearing in Theorem \ref{thm.homogeneous} is given by $\ell L_0 \mapsto \ell \wedge^k \mathfrak{u}_{0,k}$ --- for $k=0$, $L/L_0$ and $P(\bigwedge^0 \R^4)$ are singletons, so the map $\overline{\psi}$ is trivial. 

Let now $\mu$ be a probability measure on $G$ such that $\Gamma_{\mu}$ is Zariski dense in $\SL_2(\C) \ltimes U<G$ and $\mu_L$ its projection on $\SL_2(\C)$. We now make some observations on the representations of $\SL_2(\C)$ on $\bigwedge^k \R^4$ and the Lyapunov exponents.
\begin{itemize}[leftmargin=1cm]
\item To describe the Lyapunov exponents of $\mu_L$ on $\mathfrak{u}=\R^4$, note that representation of $\SL_2(\C)$ on $\R^4$ is irreducible  with proximality index two and the $\R$-split torus of $\SL_2(\C)$ is one-dimensional. Hence it follows from the work of Goldsheid--Margulis \cite{gold.marg}  (see \cite[Lemma 6.23, Corollary 10.15]{BQ.book}) that the Lyapunov exponents of $\mu_L$ (or equivalently $\mu$) acting on $\R^4$ are of the form $\lambda_1=\lambda>0$, $\lambda_2=\lambda$, $\lambda_3=-\lambda$ and $\lambda_4=-\lambda$. Finally, $F_2(\R^4)=\{0\}$ (since $\SL_2(\C) \acts \R^4$ irreducibly).
\item The representation of $\SL_2(\C)$ on $\bigwedge^2 \R^4$ is reducible. More precisely, $\bigwedge^2 \R^4=E \oplus F$ where 
$E=\langle e_1 \wedge e_2, e_3\wedge e_4, e_1\wedge e_3+e_2\wedge e_4, e_1\wedge e_4-e_2\wedge e_3\rangle$ and $F=\langle e_1\wedge e_4+e_2\wedge e_3, e_1\wedge e_3-e_2\wedge e_4\rangle$, with $\SL_2(\C)$ acting irreducibly on $E$ and acting trivially on $F$. The Lyapunov exponents of $\mu_L$ on $E$ are $2\lambda, 0, 0 -2\lambda$ and those on $F$ are $0,0$. We have $F_2(\bigwedge^2 \R^4)=F$. 
\item The representation of $\SL_2(\C)$ on $\bigwedge^3 \R^4$ is isomorphic as $\SL_2(\C)$-module to the chosen  representation on $\R^4$. 
\item Finally, $\SL_2(\R^4)$ acts trivially on $\bigwedge^4 \R^4$. 
\end{itemize}

We now prove each assertion corresponding to $k=0,1,2,3$.

\begin{enumerate}[leftmargin=1cm]
\item Case $k=0$. The quotient $L_0/L_{0,0}$ is trivial and the (unique) $\mu$-stationary probability measure $\overline{\nu}$ on $L_0/L_{0,0}$ satisfies $\alpha(\overline{\psi}\ast \overline{\nu})=0<\lambda_1=\lambda$. We are then in case (2) of Theorem \ref{thm.homogeneous} and Corollary \ref{corol.homogeneous}. Since $F_2(\R^4)=\{0\}$, by Corollary \ref{corol.homogeneous}, there exists a lift of $\overline{\nu}$ if any only of $\overline{\Gamma}_\mu^Z$ can be conjugated into $\SL_2(\C)$. But since $\overline{\Gamma}_\mu^Z$ has a non-trivial unipotent radical, this is not possible and we are done.

\item Case $k=1$. First, we claim that the quotient $L/L_{0,1}$ has a unique $\mu_L$-stationary probability measure $\overline{\nu}$. Indeed, on the one hand, by compactness, it has at least one $\mu_L$-stationary probability measure. On the other hand, by Example \ref{example.sl2C}.(B) (which relies on the work of Benoist--Quint \cite{bq.compositio}), the range of $\overline{\psi}$ in $P(\R^4)$ has a unique $\mu_L$-stationary probability measure. The claim follows.
Now, we have $\alpha(\overline{\psi}\ast \overline{\nu})=\lambda_1(\R^4)=\lambda< 2 \lambda=\lambda_1+\lambda_{2}$. We are hence again in the setting of Corollary \ref{corol.homogeneous}. 
Note that $F_2(\bigwedge^2 \R^4)=F$ and the projective subspace of $P(\R^4)$ generated by the support of $\overline{\psi}\ast \overline{\nu}$ is all of $P(\R^4)$ because $\SL_2(\C)$ acts irreducibly on $\R^4$.  Thus by Corollary \ref{corol.homogeneous}, it is enough to show that  there exists $u,x\in \mathfrak{u}$  such that $x\wedge u\not\in F$. However, it is clear from the expression of $F$ that for any $x\in \R^4$, $x\wedge e_1\not\in F$. The desired condition is satisfied and there is indeed no $\mu$-stationary lift of $\overline{\nu}$ in $G/H_1$, and hence no stationary measure on $G/H_1$. 

\item Case $k=2$. First, we claim that $L/L_{0,2}$ has a unique $\mu_L$-stationary probability measure $\overline{\nu}$. Again, by compactness, it has at least one such measure. Recall that the subspace $E$ of $\bigwedge^2 \R^4$ is $L$-invariant and irreducible, and that the Lyapunov exponents of $\mu_L$ on $E$ are $2\lambda_1,0,0,-2\lambda_2$. 
Since $\overline{\psi}(\id L_0)=e_1\wedge e_2\in E$, it follows that $\overline{\psi}(L/L_{0,2})\subset P(E)$. Now  $\SL_2(\C)$ acts on $E$ strongly irreducibly and proximally (for instance because the action is irreducible and $\lambda_{1,\mu_L}(E)>\lambda_{2,\mu_L}(E)$). Thus, by Guivarc'h--Raugi's theorem \cite{guivarch-raugi}, there exists a unique $\mu$-stationary probability measure on $P(E)$ and the claim is proved.

We now show that this unique stationary measure has a lift on $G/H$. Since $\SL_2(\C)$ acts irreducibly on $W_3=\bigwedge^3 \R^4$, we have $\beta_{\min}(W_3)=\lambda_1(W_3)=\lambda< 2\lambda=\alpha(\overline{\psi}\ast \overline{\nu})$. Theorem \ref{thm.homogeneous} (1) insures then that there is a unique $\mu$-stationary lift of $\overline{\nu}$ on $G/H_2$ and we are done.

\item Case $k=3$. First, we check that $L/L_{0,3}$ has a unique $\mu$-stationary measure $\overline{\nu}$. Since the $\SL_2(\C)$-representation $\bigwedge^3 \R^4$ is isomorphic to the one on $\R^4$, it follows as for case $k=1$ that there exists a unique $\mu$-stationary probability measure $\overline{\nu}$ on $L/L_{0,3}$. We have $\alpha(\overline{\psi}\ast \overline{\nu})=\lambda>0=\lambda_1(W_3)$. The claim then follows directly from (1) of Theorem \ref{thm.homogeneous}. \qed
\end{enumerate}



 \bibliographystyle{abbrv} 

\end{document}